\theoremstyle{plain}
\newtheorem{theorem}{Theorem}[section]
\newtheorem{proposition}[theorem]{Proposition}
\newtheorem{lemma}[theorem]{Lemma}
\newtheorem{corollary}[theorem]{Corollary}
\theoremstyle{definition}
\newcommand{\appsection}[1]{\let\oldthesection\thesection
\renewcommand{\thesection}{Appendix \oldthesection}
\section{#1}\let\thesection\oldthesection}
\newtheorem{definition}[theorem]{Definition}
\theoremstyle{remark}
\newtheorem{remark}[theorem]{Remark}
\newtheorem{example}[theorem]{Example}
\DeclareMathOperator{\disc}{discr}
\def\D{{\mathbb{D}}}
\def\Q{{\mathbb{Q}}}
\def\C{{\mathbb{C}}}
\def\P{{\mathbb{P}}}
\def\E{{\mathbb{E}}}
\def\O{{\mathcal{O}}}
\def\X{{\mathcal{X}}}
\def\Y{{\mathcal{Y}}}
\def\M{{\mathcal{M}}}
\def\W{{\mathcal{W}}}
\def\DD{{\mathcal{D}}}
\DeclareMathOperator{\Def}{Def} \DeclareMathOperator{\QG}{QG}
\newcommand{\eni}{mk1A}
\newcommand{\enii}{mk2A}
\begin{document}
\bibliographystyle{amsplain}
\title[Construcci\'on]{Identifying neighbors of stable surfaces}
\author{\textrm{Giancarlo Urz\'ua}}
\date{\today}
\subjclass[2010]{14J29, 14J10, 14E30}

\email{urzua@mat.puc.cl}

\maketitle

\begin{abstract}
We identify the stable surfaces around the stable limit of the examples of Y. Lee and J. Park \cite{LP07}, and H. Park, J. Park and D. Shin \cite{PPS09} using the explicit $3$-fold Mori theory in \cite{HTU12}. These surfaces belong to the Koll\'ar--Shepherd-Barron--Alexeev compactification of the moduli space of simply connected surfaces of general type with $p_g=0$ and $K^2=1,2,3$.
\end{abstract}

\tableofcontents

\section{Introduction} \label{intro}

A main application of \cite{HTU12} is to have an explicit $3$-fold Mori theory to find stable limits of $\Q$-Gorenstein one parameter degenerations of surfaces with only log terminal singularities. The aim of this paper is to run \cite[\S5]{HTU12} on the singular examples of Y. Lee and J. Park \cite{LP07}, and H. Park, J. Park and D. Shin \cite{PPS09} to identify stable surfaces around them. These surfaces belong to the Koll\'ar--Shepherd-Barron--Alexeev (KSBA) compactification of the moduli space of (simply connected) surfaces of general type with $p_g=0$ and $K^2=1,2,3$ \cite{KSB88,AM04,K90}. This moduli space has no explicit description for any $K^2$. It is not even known whether it is irreducible. Moreover, the only explicit surfaces with those invariants are Barlow surfaces \cite[VII.10]{BHPV04} \footnote{Conjecturally we also have the Craighero-Gattazzo surface.}, where $K^2=1$, and for the rest we only know existence via the $\Q$-Gorenstein smoothing method pioneered in \cite{LP07}.

We work out one example for each $K^2$, and state results for the others. We find their stable (KSBA) models (see Lemma \ref{l2} for the general picture), and the smooth minimal model of the stable singular surfaces around them. Lee-Park examples represent points of the moduli space of stable surfaces \footnote{ A local model of the coarse moduli space at these surfaces is the space of $\Q$-Gorenstein deformations, which is smooth at all of the Lee-Park examples \cite[\S3]{H2011}, modulo a finite group of automorphisms.}, with local dimension $10-2K^2$, and each of its Wahl singularities $\frac{1}{n^2}(1,na-1)$ defines a boundary divisor $\DD{n \choose a}$. In this way, we will be identifying general points on these divisors. This is done in \S\ref{s1}, \S\ref{s2}, and \S\ref{s3}.

In \S\ref{s0}, we summarize the results we need from \cite{HTU12}, passing through the necessary notations and facts. Then, in \S\ref{identify} we describe in detail the strategy to identify stable surfaces around a given one. We would like to remark that the techniques used in \S\ref{identify} can be applied to surfaces with other invariants. The choice of invariants in this paper reflects the interest of the author.

Before working out the examples, in \S\ref{app1} we describe how $p_g=0$ elliptic surfaces can be constructed via $\Q$-Gorenstein smoothings. Apart from putting these elliptic surfaces in perspective with the general type constructions, this description will be used in the next sections to identify stable surfaces.

The identification in \S\ref{s1}, \S\ref{s2}, and \S\ref{s3} shows the presence of various special surfaces in the KSBA boundary. For example, there are singular stable surfaces whose smooth minimal models are $p_g=0$ surfaces of general type which contain certain configurations of curves. There are also stable surfaces whose smooth minimal models are Dolgachev surfaces (i.e., simply connected elliptic fibrations with $p_g=0$ and Kodaira dimension $1$, see Corollary \ref{dolga}), and special rational surfaces. In some cases, these rational examples are distinct from the type of examples in \cite{LP07,PPS09} and related papers, where the construction depends on rational elliptic fibrations with certain singular fibers. Hence this brings a new type of construction; see \cite{Urz4} for concrete examples.

Finally some conventions. We write the same letter to denote a curve and its strict transform under a birational map. We use Kodaira's notation \cite[p.201]{BHPV04} for singular fibers of elliptic fibrations. A $(-n)$-curve in a smooth surface is a curve $C \simeq \P^1$ with $C^2=-n$. The symbol $\D$ will be used for a smooth analytic germ of a curve. A surface in the Koll\'ar--Shepherd-Barron--Alexeev moduli space will be called either stable or KSBA surface. The ground field is $\C$.

\subsection*{Acknowledgements}

I am grateful to the anonymous referee for very helpful suggestions which have significatively improved the presentation of the paper. I have also benefited from many conversations with Paul Hacking and Jenia Tevelev. I was supported by the FONDECYT Inicio grant 11110047 funded by the Chilean Government.

\section{Preliminaries} \label{s0}

The purpose of this section is to give a summary of some results from \cite{HTU12} which will be used in the next sections. We first recall some terminology and facts from various sources.

\def\bQ{\Bbb Q}
\def\bC{\Bbb C}

\subsection{Cyclic quotient singularities} \label{cyclic}

A cyclic quotient singularity $Y$, denoted by $\frac{1}{m}(1,q)$, is a germ at the origin of the quotient of $\C^2$ by the action of
$\mu_m$ given by $(x,y)\mapsto (\mu x, \mu^q y)$, where $\mu$ is a primitive $m$-th root of $1$, and $q$ is an integer with $0<q<m$ and gcd$(q,m)=1$; cf. \cite[III \S5]{BHPV04}. Let $\sigma \colon \widetilde{Y} \rightarrow Y$ be the minimal resolution of $Y$. Figure \ref{exdiv} shows the exceptional curves $E_i=\P^1$ of $\sigma$, for $1 \leq i \leq s$, and the strict transforms $E_0$ and $E_{s+1}$ of $(y=0)$ and $(x=0)$ respectively.

\begin{figure}[htbp]
\includegraphics[width=11.5cm]{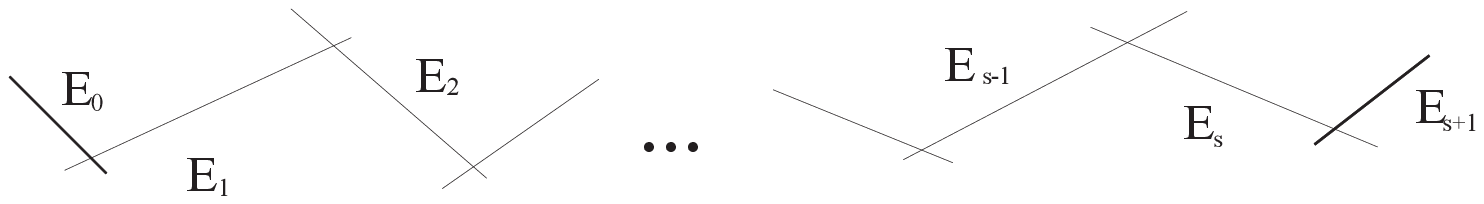}
\caption{Exceptional divisors over $\frac{1}{m}(1,q)$, $E_0$ and
$E_{s+1}$} \label{exdiv}
\end{figure}

The numbers $E_i^2=-b_i$ are computed using the {\em Hirzebruch-Jung continued fraction}
$$ \frac{m}{q} = b_1 - \frac{1}{b_2 - \frac{1}{\ddots - \frac{1}{b_s}}} =: [b_1, \ldots ,b_s].$$

A configuration of curves $[b_1,\ldots,b_s]$ in a nonsingular surface will mean the corresponding exceptional divisor of the singularity $\frac{1}{m}(1,q)$.

We use same notation for continued fractions $[b_1, \ldots ,b_s]$ even when some $b_i$ are $1$. This will happen in \S \ref{enbhd} for example.

The continued fraction $[b_1,\ldots,b_s]$ defines the sequence of integers $$ 0=\beta_{s+1} < 1=\beta_s < \ldots < q=\beta_1 < m= \beta_0 $$ where $\beta_{i+1}= b_{i}\beta_i - \beta_{i-1}$. In this way, $\frac{\beta_{i-1}}{\beta_{i}}=[b_i,\ldots,b_s]$. Partial fractions $\frac{\alpha_i}{\gamma_i} =[b_1,\ldots,b_{i-1}]$ are computed through the sequences $$ 0=\alpha_0 < 1=\alpha_1 < \ldots < q^{-1}=\alpha_s < m= \alpha_{s+1},$$ where $\alpha_{i+1}=b_i\alpha_{i} - \alpha_{i-1}$ ($q^{-1}$ is the integer such that $0<q^{-1}<m$ and $q q^{-1} \equiv 1 ($mod $m)$), and $\gamma_0=-1$, $\gamma_1=0$, $\gamma_{i+1}=b_i \gamma_i - \gamma_{i-1}$. We have $\alpha_{i+1}\gamma_i - \alpha_i \gamma_{i+1}=-1$, $\beta_i = q \alpha_i - m \gamma_i$, and $\frac{m}{q^{-1}}=[b_s,\ldots,b_1]$. These numbers appear in the pull-back formulas $$ \sigma^*\big((y=0)\big) = \sum_{i=0}^{s+1} \frac{\beta_i}{m} E_i, \ \ \ \text{and} \ \ \ \sigma^*\big((x=0)\big)= \sum_{i=0}^{s+1} \frac{\alpha_i}{m} E_i,$$ and $K_{\widetilde{Y}}
\equiv \sigma^*(K_Y) + \sum_{i=1}^s (-1 +\frac{\beta_i+\alpha_i}{m}) E_i$.

\subsection{$\Q$-Gorenstein deformations} \label{Qgd}

The following terminology and facts are from~\cite{KSB88}.

\begin{definition}
Let $Y$ be a normal surface with only quotient singularities, and let $\D$ be a smooth analytic germ of a curve. A deformation $(Y \subset \Y)
\rightarrow (0 \in \D)$ of $Y$ is called a {\em smoothing} if its general fiber is smooth. It is {\em $\Q$-Gorenstein} if $K_{\Y}$
is $\Q$-Cartier. \label{def}
\end{definition}

\begin{definition}
A germ of a normal surface $Y$ is called a {\em T-singularity} if it is a quotient singularity and admits a $\bQ$-Gorenstein smoothing.  \label{T-sing}
\end{definition}

A $T$-singularity is either a du Val singularity or a cyclic quotient singularity ${1\over dn^2}(1,dna-1)$ with gcd$(n,a)=1$ \cite[Prop.3.10]{KSB88}. A $T$-singularity with a one-dimensional $\bQ$-Gorenstein versal deformation space is either a node $\text{A}_1$ or a {\em Wahl singularity} ${1\over n^2}(1,na-1)$.

\begin{definition}
Let $(Q\in Y)$ be a germ of a two dimensional quotient singularity. A proper birational map $f \colon X \to Y$ is called a {\em $P$-resolution} if $f$ is an isomorphism away from~$Q$, $X$ has $T$-singularities only, and $K_X$ is ample relative to~$f$ \cite[Def.3.8]{KSB88}. \label{P-res}
\end{definition}

By \cite[3.9]{KSB88}, there is a natural bijection between P-resolutions $X^+\to Y$ and irreducible components of the formal
deformation space $\Def(Y)$. Namely, let $\Def^{\QG}(X^+)$ denote the versal $\Q$-Gorenstein deformation space of $X^+$. Recall that
for any rational surface singularity $Z$ and its partial resolution $X\to Z$, there is an induced map $\Def(X) \to \Def(Z)$ of
formal deformation spaces \cite[1.4]{Wahl76}, which we refer to as {\em blowing down deformations}. In particular, we have a map
$\Def^{\QG}(X^+) \rightarrow \Def(Y)$. The germ $\Def^{\QG}(X^+)$ is smooth, the map $\Def^{\QG}(X^+) \rightarrow \Def(Y)$ is a closed embedding,
and it identifies $\Def^{\QG}(X^+)$ with an irreducible component of $\Def(Y)$. All irreducible components of $\Def(Y)$ arise in
this fashion (in a unique way).

\subsection{Extremal neighborhoods} \label{enei}

Now some definitions from \cite{KM92}.

\begin{definition}
An {\em extremal neighborhood} $(C \subset \X) \to (Q\in \Y)$ is a proper birational morphism between normal $3$-folds $F \colon \X \to \Y$ such that
\begin{enumerate}
\item The canonical class $K_{\X}$ is $\Q$-Cartier and $\X$ has only terminal singularities.

\item There is a distinguished point $Q\in \Y$ such that ${F}^{-1}(Q)$ consists of an irreducible curve $C \subset \X$.

\item $K_{\X} \cdot C <0$.
\end{enumerate}
\label{ExtrNbhd}
\end{definition}

Let Exc$(F)$ be the exceptional loci of $F$. An extremal neighborhood is {\em flipping} if Exc$(F)=C$. Otherwise, Exc$(F)$ is two dimensional, and $F$ is called {\em divisorial}.

In the flipping case, $K_{\Y}$ is not $\Q$-Cartier. Then one attempts another type of birational modification. A {\em flip} of a flipping
extremal neighborhood $$F \colon (C \subset \X) \to (Q \in \Y)$$ is a proper birational morphism $$F^+ \colon (C^+ \subset \X^+) \to (Q \in
\Y)$$ where $\X^+$ is normal with terminal singularities, Exc$(F^+)=C^+$ is a curve, and $K_{\X^+}$ is $\Q$-Cartier and
$F^+$-ample. A flip induces a birational map $\X \dashrightarrow \X^+$ to which we also refer as flip. When a flip exists then it
is unique (cf. \cite{KM1998}). Mori \cite{Mori88} proves that ($3$-fold) flips always exist.

\subsection{Explicit semi-stable MMP} \label{enbhd}

In \cite{HTU12} we focus on two particular types of extremal neighborhoods, which appear naturally when working on the Koll\'ar--Shepherd-Barron--Alexeev
compactification of the moduli of surfaces of general type.

\begin{definition}
Let $(Q \in Y)$ be a two dimensional cyclic quotient singularity germ. Assume there is a partial resolution $f \colon X \to Y$ of $Y$ such that $f^{-1}(Q)$ is a smooth rational curve $C$ with one (two) Wahl singularity(ies) of $X$ on it. Suppose $K_{X} \cdot C<0$. Let $(X \subset \X) \rightarrow (0 \in \D)$ be a $\Q$-Gorenstein smoothing of $X$ over a smooth analytic germ of a curve $\D$. Let $(Y \subset \Y) \rightarrow (0 \in \D)$ be the corresponding blowing down deformation of $Y$. The induced birational morphism $(C \subset \X) \rightarrow (Q \in \Y)$ is called \textit{extremal neighborhood of type \eni ~(\enii)}; we denote it by \eni ~(\enii).
\label{EN}
\end{definition}

These extremal neighborhoods are of type k1A and k2A (cf. \cite{KM92,Mori02}), and they are minimal with respect to the second
betti number, which is equal to $1$, of the Milnor fiber of $(Y \subset \Y) \rightarrow (0 \in \D)$ (see \cite[Prop.2.1]{HTU12} for more discussion on this).

\begin{definition}
A P-resolution $f^+ \colon X^+ \to Y$ of a two dimensional cyclic quotient singularity germ $(Q \in Y)$ is called \textit{extremal
P-resolution} if ${f^+}^{-1}(Q)$ is a smooth rational curve $C^+$, and $X^+$ has only Wahl singularities (thus at most two; cf. \cite[Lemma 3.14]{KSB88}). \label{EPR}
\end{definition}

\begin{proposition}
Let $(C \subset \X) \to (Q \in \Y)$ be a flipping ~\eni ~or \enii, where $(C \subset X) \to (Q \in Y)$ is the contraction of $C$ between the special fibers. Then there exists an extremal P-resolution $(C^+ \subset X^+) \to (Q \in Y)$, such that the flip $(C^+ \subset \X^+) \to (Q \in \Y)$ is obtained by the blowing down deformation of a $\Q$-Gorenstein smoothing of $X^+$. The commutative diagram of maps is $$ \xymatrix{ (C \subset \X) \ar[ddr] \ar[dr] \ar@{-->}[rr]^{\text{flip}} &  & (C^+ \subset \X^+) \ar[dl] \ar[ldd] \\ & (Q \in \Y) \ar[d] & \\ & (0 \in \D), & }$$ and restricted to the special fibers we have
 $$ \xymatrix{ (C \subset X)  \ar[dr] \ar@{-->}[rr] &  & (C^+ \subset X^+) \ar[dl] \\ & (Q \in Y). & } $$
\label{flipPres}
\end{proposition}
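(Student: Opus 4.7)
The plan is to produce the flip by invoking Mori's existence theorem and then to identify its central fiber using the bijection between P-resolutions of $Y$ and irreducible components of $\Def(Y)$ recalled in \S\ref{Qgd}.

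First, since $(C\subset\X)\to(Q\in\Y)$ is a flipping extremal neighborhood in the sense of Definition \ref{ExtrNbhd}, Mori's theorem \cite{Mori88} produces a unique flip $F^+\colon(C^+\subset\X^+)\to(Q\in\Y)$ with $\X^+$ terminal and $K_{\X^+}$ relatively $F^+$-ample. The parameter $t\in\D$ pulls back to $\Y$, and hence to $\X^+$, giving a compatible morphism $\X^+\to\D$. Because the flip is an isomorphism in codimension one over $\Y$, its general fiber coincides with the smooth general fiber of $\X\to\D$, so $(X^+\subset\X^+)\to(0\in\D)$ is a $\Q$-Gorenstein smoothing whose special fiber $X^+$ is a normal surface. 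Restricting $F^+$ to special fibers yields a proper birational map $f^+\colon X^+\to Y$ with exceptional curve $C^+$, and since $X^+$ is a Cartier divisor on $\X^+$, relative ampleness of $K_{\X^+}$ over $\Y$ restricts to relative ampleness of $K_{X^+}$ over $Y$.

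The heart of the argument is to check that $f^+\colon X^+\to Y$ is an \emph{extremal} P-resolution in the sense of Definition \ref{EPR}, i.e.\ that $X^+$ has only Wahl singularities and $C^+$ is a smooth rational curve meeting at most two of them. For this I would invoke \cite[3.9]{KSB88}: the smoothing $(Y\subset\Y)\to\D$ picks out a unique irreducible component of $\Def(Y)$, which by construction is the image of $\Def^{\QG}(X)$ under blowing down. Since $\X\dashrightarrow\X^+$ is an isomorphism over $\Y\setminus\{Q\}$, the two smoothings of $Y$ obtained by blowing down $\X$ and $\X^+$ coincide, so $\Def^{\QG}(X^+)$ hits the same component; by the uniqueness in \cite[3.9]{KSB88}, $f^+$ is a P-resolution. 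To upgrade this to \emph{extremal}, one uses that the original neighborhood is of type \eni\ or \enii, so the Milnor fiber of $(Y\subset\Y)\to\D$ has second Betti number $1$ by \cite[Prop.2.1]{HTU12}; this invariant is preserved by the flip, forcing the smoothing of $X^+$ to have $b_2=1$ as well, which pins $C^+$ down to a single smooth $\P^1$ carrying at most two Wahl singularities of $X^+$.

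The main obstacle is precisely this last step: ruling out a priori that $X^+$ could contain additional exceptional components, or $T$-singularities more general than Wahl ones, along $C^+$. An alternative to the Milnor-number argument above is to appeal directly to Mori's local classification of $k1A$ and $k2A$ flipping contractions \cite{Mori02,KM92}, reading off the terminal $3$-fold singularities of $\X^+$ along $C^+$ and recognizing them as the $\Q$-Gorenstein smoothings of Wahl singularities that appear in Definition \ref{EN}. Either route yields the claimed commutative diagram, with $(C^+\subset\X^+)\to(Q\in\Y)$ realized as the blowing down of the $\Q$-Gorenstein smoothing of the extremal P-resolution $X^+$.
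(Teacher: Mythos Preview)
The paper does not prove this proposition at all; its ``proof'' is a bare citation to \cite[Sect.~11 and Thm.~13.5]{KM92}, with a parenthetical pointer to \cite{Mori02,HTU12} for explicit equations. So your alternative route at the end---appealing directly to the Koll\'ar--Mori classification of $k1A$ and $k2A$ flips---is exactly what the paper does, and nothing more.

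Your main argument, by contrast, attempts a genuine reconstruction, and it is largely sound but has one soft spot. When you write ``by the uniqueness in \cite[3.9]{KSB88}, $f^+$ is a P-resolution,'' the logic is inverted: that result says each irreducible component of $\Def(Y)$ arises from a \emph{unique} P-resolution, but it does not say that an arbitrary partial resolution whose $\Q$-Gorenstein deformations hit a given component must itself be a P-resolution. What you actually need here is more direct and already implicit in your first paragraph: $\X^+$ is terminal with $K_{\X^+}$ $\Q$-Cartier, and $X^+$ is the Cartier central fiber of the smoothing $\X^+\to\D$, so by definition the singularities of $X^+$ are $T$-singularities; together with $K_{X^+}$ being $f^+$-ample (which you did establish) this gives the P-resolution property outright, without passing through \cite[3.9]{KSB88}. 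Once that is fixed, your Milnor-number argument for extremality (one exceptional curve, only Wahl singularities, via $b_2=1$ from \cite[Prop.~2.1]{HTU12}) is correct and is a nice way to see why the P-resolution is extremal rather than merely citing the classification.

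In short: your reconstruction is more informative than the paper's citation, and with the small repair above it stands on its own; but be aware that the paper itself treats this as a black box from \cite{KM92}.
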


\begin{proof}
\cite[Sect.11 and Thm.13.5]{KM92}. (See \cite{Mori02,HTU12} for explicit equations of the surfaces and $3$-folds involved.)
\end{proof}

\begin{proposition}
If an \eni ~or \enii ~is divisorial, then $(Q \in Y)$ is a Wahl singularity. The divisorial contraction $\X \rightarrow \Y$ induces the blowing down of a $(-1)$-curve between the smooth fibers of $\X \to \D$ and $\Y \to \D$. \label{divtype}
\end{proposition}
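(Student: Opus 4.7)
The plan is to restrict the divisorial contraction $F\colon\X\to\Y$ to the fibers of $\X\to\D$ and $\Y\to\D$, combine this with intersection theory on the $3$-fold $\X$, and invoke the classification of T-singularities to identify $(Q\in Y)$.

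For the second assertion, since $\X\to\D$ and $\Y\to\D$ are smoothings, the fibers $\X_t$, $\Y_t$ are smooth for $t\neq 0$ and $F_t\colon\X_t\to\Y_t$ is a proper birational morphism. Extremality of $F$ (relative Picard number one) passes to $F_t$, so $F_t$ is the blow-down of a single $(-1)$-curve $E_t\subset\X_t$. The closure in $\X$ of $\{E_t\}_{t\neq 0}$ is then precisely the divisorial exceptional locus $E$ of $F$.

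For the first assertion, first note that $(Q\in Y)$ is a T-singularity by Definition~\ref{T-sing}, since $(Y\subset\Y)\to(0\in\D)$ is a $\bQ$-Gorenstein smoothing of $Y$. Hence $(Q\in Y)$ is either du~Val or cyclic of type $\tfrac{1}{dn^2}(1,dna-1)$ with $\gcd(n,a)=1$, and the task is to rule out the du~Val case and $d\geq 2$. I would next pin down $E|_X$: the image $F(E)\subset\Y$ is an irreducible curve through $Q$ (it cannot be zero-dimensional, since extremality together with $F^{-1}(Q)=C$ being $1$-dimensional leaves no room for a second positive-dimensional fiber), and it must be horizontal over $\D$ (otherwise irreducibility and $Q\in F(E)$ would force $F(E)\subset Y$, hence $E\subset F^{-1}(Y)=X$, contradicting $F(X)=Y$ since $E$ and $X$ are both $2$-dimensional and irreducible). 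Horizontality with $Q\in F(E)$ gives $F(E)\cap Y=\{Q\}$ set-theoretically, so $E|_X$ is supported on $f^{-1}(Q)=C$ and hence equals $mC$ for some integer $m\geq 1$. Using $X\sim X_t$ as divisors on $\X$ and triple intersection in $\X$,
$$ -1 \;=\; (E_t)^2_{\X_t} \;=\; E\cdot E\cdot X_t \;=\; E\cdot E\cdot X \;=\; (mC)^2_X \;=\; m^2\, C^2_X ,$$
where $C^2_X$ denotes the Mumford $\bQ$-intersection on the singular surface $X$. Plugging the pull-back formulas of \S\ref{cyclic} for $C$ on the minimal resolution $\widetilde X$ into this identity constrains the Hirzebruch--Jung chain $[b_1,\ldots,b_s,c]$ (respectively $[b_1,\ldots,b_s,c,b'_1,\ldots,b'_{s'}]$ in the \enii\ case) that resolves $(Q\in Y)$ to a very restricted arithmetic shape.

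The main obstacle is the final step: showing that this arithmetic shape, together with the hypothesis $K_X\cdot C<0$ and the fact that the components of $X$ are Wahl, forces the chain above to be not just a T-chain but a Wahl chain (so $d=1$ and the du~Val case is excluded), which is equivalent to $(Q\in Y)$ being a Wahl singularity. This is essentially the classification of divisorial k1A and k2A extremal neighborhoods carried out in \cite{KM92,Mori02} and recast in the deformation-theoretic form needed here in \cite{HTU12}; a clean proof invokes those references for the continued-fraction verification rather than redoing the calculation from scratch.
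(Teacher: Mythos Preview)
Your argument for the second assertion is essentially fine and close in spirit to the paper's (which simply cites \cite[Prop.~3.16(b)]{HTU12}). The real divergence is in the first assertion, and there your route is both harder and left incomplete.

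Your triple-intersection identity $m^2 C^2_X=-1$ is correct (once one is careful that $nE$ Cartier implies $(nE)|_X$ is Cartier with associated Weil divisor $nmC$, so the Mumford self-intersection computes the triple product). But as you yourself flag, this only says $C^2_X=-1/m^2$, which combined with the paper's formulas $C^2_X=-\Delta/m_1^2$ (\eni) or $-\Delta/(m_1m_2)^2$ (\enii) forces $\Delta$ to be a square of a rational number; it does \emph{not} by itself exclude $d\geq 2$ in $\Delta=dn^2$, nor the du~Val case. You then defer to the full divisorial classification in \cite{KM92,Mori02,HTU12}, which is considerably heavier than the statement you are trying to prove. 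That is not a gap exactly, but it is close to circular: you are invoking the structure theory whose consequence this proposition is meant to record.

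The paper bypasses all the continued-fraction arithmetic with a short topological argument you missed. Once the general-fiber map $X'\to Y'$ is a single $(-1)$-curve blow-down, the Milnor fiber of $(Y\subset\Y)\to\D$ is obtained from that of $(X\subset\X)\to\D$ by blowing down one $(-1)$-curve, hence its second Betti number drops by one. Since by construction of \eni/\enii\ the Milnor fiber of the $X$-smoothing has $b_2=1$ (stated right after Definition~\ref{EN}), the Milnor number of the $\bQ$-Gorenstein smoothing of $(Q\in Y)$ is zero. A T-singularity whose $\bQ$-Gorenstein smoothing has Milnor number zero is precisely a Wahl singularity (for $\tfrac{1}{dn^2}(1,dna-1)$ the Milnor number is $d-1$; see \cite[Lemma~2.4]{HP2010}). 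This gives the conclusion in one line, with no recourse to the k1A/k2A classification.
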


\begin{proof}
Since $K_{X} \cdot C<0$ and $X$ has only Wahl singularities, the divisorial contraction $\X \to \Y$ induces the blowing down of a
$(-1)$-curve between the smooth fibers of $\X \to \D$ and $\Y \to \D$; see \cite[Prop.3.16(b)]{HTU12}. Since it is the divisorial contraction of an extremal ray, the $3$-fold $\Y$ is $\Q$-Gorenstein, and so $(Q \in Y)$ is a T-singularity. If $X'$ and $Y'$ are smooth fibers of $\X
\to \D$ and $Y \to \D$, then $K_{X}^2=K_{X'}^2=K_{Y'}^2-1=K_{Y}^2-1$. Hence, since the second Betti number of the smoothing $(X \subset \X) \to (0 \in \D)$ is one, we have that the Milnor number of the smoothing $\Y \to \D$ of $(Q \in Y)$ is zero. A T-singularity with a smoothing which has Milnor number zero is a Wahl singularity (see for example \cite[Lemma 2.4]{HP2010}).
\end{proof}

{The following is the \underline{numerical description} of the $X$ in an {\eni} ~or in an {\enii} (Definition \ref{EN}), and of the $X^+$ in an extremal P-resolution (Definition \ref{EPR}). This description only requires toric computations on surfaces, the $3$-folds $\X$ and $\X^+$ do not play a role. See more details in \cite[\S2]{HTU12}.

\vspace{0.4cm}

\underline{($X \to Y$ for \eni)}: Fix an \eni ~with Wahl singularity $\frac{1}{m^2}(1,ma-1)$. Let $\frac{m^2}{ma-1}=[e_1,\ldots,e_s]$ be its continued fraction. Let $E_1,\ldots,E_s$ be the exceptional curves of the minimal resolution $\widetilde{X}$ of $X$ with $E_j^2=-e_j$ for all $j$. Notice that $K_{X} \cdot C <0$ and $C \cdot C <0$ imply that the
strict transform of $C$ in $\widetilde{X}$ is a $(-1)$-curve intersecting only one component $E_i$ transversally at one point. This data will be written as $$[e_1,\ldots,\overline{e_i},\ldots,e_s]$$ so that $\frac{\Delta}{\Omega} = [e_1,\ldots,e_i-1,\dots,e_s]$ where $0<\Omega <\Delta$, and $(Q \in Y)$ is $\frac{1}{\Delta}(1,\Omega)$. Let $\beta_i,\alpha_i,\gamma_i$ be the numbers defined in \S \ref{cyclic} for the singularity $\frac{1}{m^2}(1,ma-1)$. Then $$\Delta= m^2 - \beta_i \alpha_i \ \ \ \ \ \ \Omega= ma-1-\gamma_i \beta_i$$ and, if
$\delta:= \frac{\beta_i+\alpha_i}{m}$, we have $K_{X} \cdot C=\frac{- \delta}{m} <0$ and $C \cdot C= \frac{- \Delta}{m^2}<0$.

\vspace{0.4cm}

\underline{($X \to Y$ for \enii)}: Consider now an \enii ~with Wahl singularities $\frac{1}{m_j^2}(1,m_j a_j -1)$ ($j=1,2$). Let $E_1,\ldots,E_{s_1}$
and $F_1,\ldots,F_{s_2}$ be the exceptional divisors over $\frac{1}{m_1^2}(1,m_1 a_1 -1)$ and $\frac{1}{m_2^2}(1,m_2 a_2
-1)$ respectively, such that $\frac{m_1^2}{m_1 a_1-1}=[e_1,\ldots,e_{s_1}]$ and $\frac{m_2^2}{m_2 a_2 -1}=[f_1,\ldots,f_{s_2}]$ with $E_i^2=-e_i$ and $F_j^2=-f_j$. We know that the strict transform of $C$ in the minimal resolution $\widetilde{X}$ of $X$ is a $(-1)$-curve intersecting only one $E_i$ and one $F_j$ transversally at one point, and these two exceptional curves are at the ends of these exceptional chains. The data for \enii ~will be written as $$[f_{s_2},\ldots,f_{1}]-[e_1,\ldots,e_{s_1}]$$ so that the $(-1)$-curve intersects $F_{1}$ and $E_1$, and $$\frac{\Delta}{\Omega} = [f_{s_2},\ldots,f_{1},1,e_1,\ldots,e_{s_1}]$$ where $0<\Omega<\Delta$ and $(Q \in Y)$ is
$\frac{1}{\Delta}(1,\Omega)$.

We define $\delta:= m_1 a_2 +m_2 a_1 - m_1 m_2$, and so $$\Delta= m_1^2 + m_2^2 - \delta m_1 m_2, \ \ \ \Omega= (m_2-\delta m_1)(m_2-a_2)+m_1 a_1 -1.$$ We have
$K_{X} \cdot C=\frac{- \delta}{m_1 m_2 } <0$ and $C \cdot C= \frac{- \Delta}{m_1^2 m_2^2} <0$.

\vspace{0.4cm}

\underline{($X^+ \to Y$)}: In analogy to an \enii, an extremal P-resolution has data $[f_{s_2},\ldots,f_{1}]-c-[e_1,\ldots,e_{s_1}]$, so that
$$\frac{\Delta}{\Omega}=[f_{s_2},\ldots,f_{1},c,e_1,\ldots,e_{s_1}]$$ where $-c$ is the self-intersection of the strict transform of $C^+$ in the
minimal resolution of $X^+$, $0<\Omega<\Delta$, and $(Q \in Y)$ is $\frac{1}{\Delta}(1,\Omega)$. As in an \enii~, here $\frac{{m'}_1^2}{{m'}_1
{a'}_1-1}=[e_1,\ldots,e_{s_1}]$ and $\frac{{m'}_2^2}{{m'}_2 {a'}_2-1}=[f_1,\ldots,f_{s_2}]$. If a Wahl singularity (or both) is (are) actually smooth, then we set ${m'}_i={a'}_i=1$. We define $$\delta= c{m'}_1 {m'}_2 - {m'}_1 {a'}_2 - {m'}_2 {a'}_1,$$ and so $\Delta= {m'}_1^2 + {m'}_2^2 + \delta {m'}_1 {m'}_2$ and, when both
${m'}_i \neq 1$, $$\Omega = -{m'}_1^2 (c-1) + ({m'}_2+\delta {m'}_1)({m'}_2-{a'}_2)+{m'}_1 {a'}_1 -1.$$ (One easily computes $\Omega$ when one or both ${m'}_i=1$.) We have $$K_{X^+} \cdot C^+=\frac{\delta}{{m'}_1 {m'}_2 } >0 \ \ \ \text{and} \ \ \ C^+ \cdot C^+= \frac{- \Delta}{{m'}_1^2 {m'}_2^2} <0.$$

\begin{remark}
Notice that for $0<a<m$ with $gcd(m,a)=1$, we have $\frac{m^2}{m a-1}=[e_1,\ldots,e_{s}]$ and $\frac{m^2}{m (m-a)a-1}=[e_s,\ldots,e_{1}]$, since $$(m (m-a)a-1)(ma-1) \equiv 1(\text{mod }m^2).$$ When we give the data of the Wahl singularities in an \enii~ or an extremal P-resolution, we are giving the way that the strict transform of the curve $C$ or $C^+$, respectively, intersects the exceptional divisor of the corresponding minimal resolution.
\end{remark}

We now show how to compute explicitly. First we recall Mori's algorithm to compute the numerical data of either the flip or the divisorial contraction for any \enii; cf. \cite{Mori02}.

Let us consider an arbitrary extremal neighborhood $\E$ of type $\enii$ with numerical data $(m,b)$, $(n,a)$, so that the Wahl singularities are $$\frac{1}{m^2}(1,m b -1) , \ \ \frac{1}{n^2}(1,n a -1),$$ $\delta=ma+nb-mn>0$, and $0 < \Omega < \Delta$ as above. Without loss of generality, we assume $n > m$. (Using the formulas for $\delta$ and $\Delta$, it is easy to see that $m\neq n$.) From this data, Mori constructs other extremal neighborhoods $\E'$ of type \enii ~such that both $\E$ and $\E'$ are of the same type (either flipping or divisorial), and after the birational modification the corresponding central fibers are the same. We now explain how to find these $\E'$, and Mori's criterion to know when $\E$ is flipping or divisorial.

\bigskip

Assume $\delta>1$, the case $\delta=1$ will be treated separately.

Let us define the recursion $\zeta_1=0$, $\zeta_2=1$, $$\zeta_{i+1}+\zeta_{i-1}=\delta \zeta_{i},$$ for $i \geq 2$. One can show that \begin{equation}\label{pairI}
\big(\zeta_{i+1} n-\zeta_i m,\zeta_{i+1} a-\zeta_i (m-b)\big)
\end{equation} is a pair of positive integers for all $i\geq 1$. But one can prove that there exists an integer $i_0 \geq 1$ such that \begin{equation}\label{pairII}
\big( \zeta_{i+1} m-\zeta_i n, \zeta_{i+1} b-\zeta_i (n-a) \big)
\end{equation} is a pair of positive integers only for $1 \leq i \leq i_0-1$. Precisely, we have $\zeta_{i_0+1} m-\zeta_{i_0} n \leq 0$. Two consecutive pairs of positive numbers of the form (\ref{pairI}) or (\ref{pairII}) above define the two Wahl singularities of an $\E'$, with associated numbers $\delta$, $\Omega$, and $\Delta$ (same numbers as for $\E$). Below we will show precisely the $\E'$. Mori proves that $\E$ is of flipping type if $\zeta_{i_0+1} m-\zeta_{i_0} n < 0$. Otherwise (i.e. $\zeta_{i_0+1} m-\zeta_{i_0} n = 0$) $\E$ is of divisorial type.

\bigskip

Notice that this procedure gives an initial $\E'$, right before reaching the index $i_0$. We call it the \underline{\textit{initial}} \enii~associated to a given $\E$.

\begin{example}
Let us consider an $\E$ of type \enii ~with data $(37,24)$, $(14,5)$. Here $m=14,b=5$ and $n=37,b=24$. Ones computes in this case $\Delta=11$, $\Omega=3$, and $\delta=3$. The sequence of pairs that stops is: $(14,5)$, $(5,2)$, $(1,1)$. The last $(1,1)$ means that the corresponding \enii ~is an \eni, i.e., it has only one Wahl singularity. After that, one has $3 \cdot 1 -5 <0$, and so $\E$ is of flipping type. The initial $\E'$ has one Wahl singularity $\frac{1}{25}(1,9)$. This example will continue in Example \ref{exantiflipfam}.
\label{iniexantiflipfam}
\end{example}

We now give the computation of the numerical data (as presented above) of all the $\E'$, and the corresponding flip or divisorial contraction from an initial \enii.

Consider an initial \enii ~$\E_1$ with Wahl singularities defined by pairs $(m_1,a_1)$ and $(m_2,a_2)$ with $m_2>m_1$, and numbers $\delta$, $\Delta$ and $\Omega$, where $\delta m_1 - m_2 \leq 0$. We also allow the \eni ~special case $m_1=a_1=1$.

For $i \geq 2$, we have the Mori recursions (see \cite[\S3.3]{HTU12}) $$ d(1)=m_1, \ \ \ d(2)=m_2, \ \ \ d(i-1)+d(i+1)=\delta d(i)$$ and
$c(1)= a_1$, $c(2)=m_2-a_2$, $c(i-1) + c(i+1)=\delta c(i)$ with $i \geq 3$.

When $\delta >1$, for each $i$ we have an \enii ~$\E_i$ with Wahl singularities defined by the pairs $$(m_{i},a_{i}), (m_{i+1},a_{i+1})$$ where $m_{i+1}=d(i+1), a_{i+1}=d(i+1)-c(i+1)$ and $m_i=d(i), a_i=c(i)$. We have $m_{i+1} > m_i$. The numbers $\delta$, $\Delta$ and $\Omega$, and the flipping or divisorial type of $\E_i$ are equal to the ones associated to $\E_1$. We call this sequence of \enii's a $\textit{Mori sequence}$.

If $\delta=1$, then the initial \enii ~must be flipping (by Mori's criterion), and the Mori sequence above gives only one more \enii ~with data $m_3=d(2)-d(1), a_3=d(2)-d(1) +c(1)- c(2)$ and $m_2=d(2),a_2=c(2)$.

\bigskip

From the numerical data of $\E_1$, we have according to $\delta m_1 - m_2$

\begin{itemize}
\item[\textbf{($=$0)}] $($see \cite[Prop.3.13]{HTU12}$)$ Divisorial type: then $m_1=\delta$, $m_2=\delta^2=\Delta$, $\Omega=\delta a_1-1$, and  $a_2=\delta^2-\Omega$. As in Proposition \ref{divtype}, the corresponding contraction $(X \subset \X) \to (Y \subset \Y)$ has the effect of blowing down a $(-1)$-curve $E' \subset X' \to Y'$ between smooth fibers $X'$ and $Y'$.

\item[\textbf{($<$0)}] $($see \cite[Prop.3.15, Thm.3.20]{HTU12}$)$ Flipping type: the extremal P-resolution $X^+$ has ${m'}_2=m_1$, ${a'}_2=m_1-a_1$, and ${m'}_1=m_2-\delta m_1, \ {a'}_1 \equiv (m_2-a_2)-\delta a_1 \ (\text{mod} \ {m'}_1)$. If $m_1=a_1=1$, then we set ${a'}_2=1$. The self-intersection of $C^+$ can be found using the formula for $\delta$ for an extremal P-resolution, see the numerical description above.
\end{itemize}

\begin{remark}
For a given Wahl singularity $\frac{1}{\delta^2}(1,\delta a-1)$ we have one Mori sequence of divisorial type starting with the data in \textbf{($=$0)}. For a given extremal P-resolution $X^+$, we have at most two corresponding Mori sequences, one for each of the Wahl singularities in $X^+$. This is in \cite[Cor.3.23]{HTU12}, and the precise procedure can be read from either above or from the last part of the proof of \cite[Cor.3.23]{HTU12}. We do not give details here because we will not use it.
\label{anti}
\end{remark}

In \cite{HTU12} we show how to compute for all extremal neighborhoods of type \eni. More precisely, we prove that a given exceptional neighborhood of type \eni ~degenerates to two \enii ~sharing the type, and the central fiber of the resulting birational operation.

\begin{proposition} \cite[\S2.3 and \S3.4]{HTU12}
Let $[e_1,\ldots,\overline{e_i},\ldots,e_s]$ be the data of an \eni ~with $\frac{m^2}{ma-1}=[e_1,\ldots,e_s]$. Let $\delta, \Delta,
\Omega$ be as in the above numerical description of an \eni. Let $\frac{m_2}{m_2-a_2}=[e_1,\ldots,e_{i-1}]$ and $\frac{m_1}{m_1-a_1}=[e_s,\ldots, e_{i+1}]$, if possible (this is, for the first $i>1$, for the second $i<s$). Then, there are \enii ~with data $$[f_{s_2},\ldots,f_{1}]-[e_1,\ldots,e_s] \ \ \
\text{and} \ \ \ [e_1,\ldots,e_s]-[g_1,\ldots,g_{s_1}],$$ where $\frac{m_2^2}{m_2a_2-1}= [f_1,\ldots,f_{s_2}]$,
$\frac{m_1^2}{m_1a_1-1}= [g_1,\ldots,g_{s_1}]$, such that the corresponding cyclic quotient singularity
$\frac{1}{\Delta}(1,\Omega)$ and $\delta$ are the same for the \eni ~and the \enii. Moreover, each of the \enii ~deforms (over a smooth analytic germ of a curve) to the \eni ~by $\Q$-Gorenstein smoothing up $\frac{1}{m_i^2}(1,m_ia_i-1)$ while keeping $\frac{1}{m^2}(1,ma-1)$, and there are two possibilities: either these three extremal neighborhoods are

\begin{enumerate}
\item flipping, with the same extremal P-resolution for the flip, or

\item divisorial, with the same $(Q \in Y)$.
\end{enumerate}
\label{InumII}
\end{proposition}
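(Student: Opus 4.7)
The proof is a numerical verification buttressed by a deformation-theoretic identification; it rests on the formulas of Section \ref{cyclic}, the numerical descriptions in Section \ref{enbhd}, and the Koll\'ar--Shepherd-Barron correspondence (Section \ref{Qgd}) between components of $\Def(Y)$ and P-resolutions of $Y$. The first step is to check that the two Wahl singularities are well-defined. Apply Section \ref{cyclic} to $\frac{1}{m^2}(1,ma-1)$ with chain $[e_1,\ldots,e_s]$: by definition $\alpha_i/\gamma_i=[e_1,\ldots,e_{i-1}]$ (which requires $i>1$), and the recursion for the $\beta$'s together with the reversal symmetry $[b_1,\ldots,b_r]\leftrightarrow[b_r,\ldots,b_1]$ of Hirzebruch--Jung fractions identifies $\beta_i$ with the numerator of $[e_s,\ldots,e_{i+1}]$ (which requires $i<s$). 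Setting $m_2/(m_2-a_2)=[e_1,\ldots,e_{i-1}]$ and $m_1/(m_1-a_1)=[e_s,\ldots,e_{i+1}]$ therefore forces $\alpha_i=m_2$ and $\beta_i=m_1$, with $\gcd(m_j,a_j)=1$ automatic since these subfractions are already reduced. Consequently $\frac{1}{m_j^2}(1,m_ja_j-1)$ is a genuine Wahl singularity with Wahl chain $[f_1,\ldots,f_{s_2}]$ (resp.\ $[g_1,\ldots,g_{s_1}]$), and the two arrangements $[f_{s_2},\ldots,f_1]-[e_1,\ldots,e_s]$ and $[e_1,\ldots,e_s]-[g_1,\ldots,g_{s_1}]$ define candidate \enii's in the respective ranges of $i$.

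Next I would verify the coincidence of $(Q\in Y)$ and $\delta$. For the first candidate \enii, $(Q\in Y)$ has $\Delta'/\Omega'=[f_{s_2},\ldots,f_1,1,e_1,\ldots,e_s]$. Iteratively blowing down the interior $(-1)$-entry and any further $(-1)$'s it exposes, and using that $[f_1,\ldots,f_{s_2}]$ is the Wahl chain of $\frac{1}{m_2^2}(1,m_2a_2-1)$ with $m_2/(m_2-a_2)=[e_1,\ldots,e_{i-1}]$, reduces $\Delta'/\Omega'$ to $[e_1,\ldots,e_{i-1},e_i-1,e_{i+1},\ldots,e_s]=\Delta/\Omega$, the \eni~quotient. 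The symmetric argument handles the other candidate. For the extremal-ray invariant, $\delta_{\eni}=(\alpha_i+\beta_i)/m=(m_1+m_2)/m$ by Step 1, and direct substitution of $\alpha_i=m_2$, $\beta_i=m_1$ into the \enii~formula $\delta_{\enii}=m_1a_2+m_2a_1-m_1m_2$ yields the same value via the continued-fraction identities at position $i$ inside $[e_1,\ldots,e_s]$.

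Finally comes the deformation-theoretic identification and the flipping/divisorial alternative. Each \enii~surface $X'$ has a two-dimensional Q-Gorenstein deformation space (Section \ref{Qgd}), and the one-parameter subfamily smoothing only $\frac{1}{m_j^2}(1,m_ja_j-1)$ while preserving $\frac{1}{m^2}(1,ma-1)$ maps under $\Def^{\QG}(X')\to\Def(Y)$ to a one-parameter family inside $\Def(Y)$. By the previous step this lies on the same irreducible component of $\Def(Y)$ as the \eni-family of $X$, since the component is detected by $(Q\in Y)$ together with the invariants $(\Delta,\Omega,\delta)$ of the extremal ray; consequently the general fiber of the \enii~family is isomorphic to the \eni~surface $X$. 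The type (flipping or divisorial) is determined from $(Q\in Y)$ and $\delta$ alone via Propositions \ref{flipPres} and \ref{divtype}, so all three extremal neighborhoods share this type, and in the flipping case share the common extremal P-resolution $X^+\to Y$ with invariants $(\Delta,\Omega,\delta)$.

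The conceptual crux is the first step: the identification $\alpha_i=m_2$, $\beta_i=m_1$ via reversal symmetry, and the observation that truncating a Wahl chain at an interior index produces Hirzebruch--Jung chains of Wahl singularities. Everything else is bookkeeping on the explicit formulas of Section \ref{enbhd} together with the standard bijection between components of $\Def(Y)$ and P-resolutions of $Y$.
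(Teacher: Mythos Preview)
The paper does not prove this proposition; it is quoted from \cite[\S2.3 and \S3.4]{HTU12}, where it is established via explicit universal families of extremal neighborhoods, so there is nothing in the present text to compare against directly. Your Steps~1 and~2 are essentially correct: the identifications $\alpha_i=m_2$, $\gamma_i=m_2-a_2$, $\beta_i=m_1$ follow from \S\ref{cyclic}, and then $\delta_{\eni}=a\alpha_i-m\gamma_i=ma_2+m_2a-mm_2$ is exactly the \enii\ formula with $(m,a)$ playing the role of the first Wahl pair, while $\Delta$ and $\Omega$ match by the same substitution. (One phrasing slip: the truncations $[e_1,\ldots,e_{i-1}]$ are not themselves Wahl chains; they only record the pair $(m_2,m_2-a_2)$, from which one then builds the separate Wahl chain $[f_1,\ldots,f_{s_2}]$ for $\frac{1}{m_2^2}(1,m_2a_2-1)$.)

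The genuine gap is Step~3. You assert that the component of $\Def(Y)$ is ``detected by $(Q\in Y)$ together with $(\Delta,\Omega,\delta)$'', and that the flipping/divisorial type follows from these data via Propositions~\ref{flipPres} and~\ref{divtype}. Neither claim is justified: those propositions describe the outcome in each case but give no criterion to distinguish the cases from the invariants alone, and the paper itself (Remark~\ref{rrr}, citing \cite[\S4]{HTU12}) points to cyclic quotients admitting two distinct extremal P-resolutions, so one should not expect the numerical triple to single out a component of $\Def(Y)$. What must actually be shown is that the general fibre of the partial $\Q$-Gorenstein smoothing of the \enii\ surface is \emph{isomorphic} to the \eni\ surface $X$---equivalently, that your blow-down computation in Step~2 is the combinatorial trace of a genuine geometric specialization of $(C\subset X')$ to $(C\subset X)$. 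In \cite{HTU12} this is carried out by writing down the two-parameter deformation explicitly; as written, your argument only shows that the two extremal neighborhoods share numerical invariants, not that one deforms to the other or that they have the same flip or divisorial contraction.
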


Therefore Proposition \ref{InumII} allows us to compute the flip or the divisorial contraction for any \eni ~through the Mori algorithm \cite{Mori02} for extremal neighborhoods of type k2A described above. In \cite{HTU12} we show that this gives a complete description of the situation, which provides a universal family for both flipping and divisorial contractions; see \cite[\S3]{HTU12}. Below we show a complete example in each case.

\begin{example} (Divisorial family) Consider the Wahl singularity $(Q \in Y)=\frac{1}{4}(1,1)$. So $\Delta=4$ and $\Omega=1$, and $\delta=2$. Then the numerical data of any \eni ~and any \enii ~of divisorial type associated to $(Q \in Y)$ can be read from $$[4]-[2,\bar{2},6]-[2,2,2,\bar{2},8]-[2,2,2,2,2,\bar{2},10]-\cdots $$ Notice that $\delta=2$. For example, $[2,2,2,2,2,\bar{2},10]$ is an \eni~, and $[2,\bar{2},6]-[2,2,2,\bar{2},8]$ is an \enii.
\label{exdivfam}
\end{example}

\begin{example} (Flipping family) Let $\frac{1}{11}(1,3)$ be the cyclic quotient singularity $(Q \in Y)$. So $\Delta=11$ and $\Omega=3$. Consider the extremal P-resolution $X^+ \to Y$ defined by $[4]-3$. Here ${m'}_1={a'}_1=1$, ${m'}_2=2$, ${a'}_2=1$, $\delta=3$, and the ``middle" curve is a $(-3)$-curve. Then the numerical data of any \eni ~and any \enii ~associated to $X^+$ can be read from
$$[\bar{2},5,3]-[2,3,\bar{2},2,7,3]-[2,3,2,2,2,\bar{2},5,7,3]-\cdots
$$ and
$$[4]-[2,\bar{2},5,4]-[2,2,3,\bar{2},2,7,4]-[2,2,3,2,2,2,\bar{2},5,7,4]-\cdots$$ These two Mori sequences provide the numerical data of the universal antiflip \cite[\S3]{HTU12} of $[4]-3$. For particular examples, we have that $[2,3,\bar{2},2,7,3]$ and $[2,\bar{2},5,4]$ are \eni~ whose flips have $X^+$ as central fiber.
\label{exantiflipfam}
\end{example}

A flip which appears frequently in calculations is the following

\begin{proposition}
Let $[e_1,\ldots,e_{s-1},\overline{e_s}]$ be a flipping \eni. Let $i \in \{1,\ldots,s\}$ be such that $e_i \geq 3$ and $e_j=2$ for all $j>i$. (If $e_s>2$, then we set $i=s$.)

Then the data for $X^+$ is $e_1-[e_2,\ldots,e_i-1]$.
\label{specialFlip}
\end{proposition}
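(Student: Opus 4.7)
The plan is to verify the formula directly by computing the continued fraction of the cyclic quotient singularity $(Q\in Y)$, identifying the proposed $X^+$ as the extremal P-resolution whose minimal resolution realizes this continued fraction, and matching the numerical invariant $\delta$ to pin down the correct flip via Proposition~\ref{flipPres}.

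The first step is the continued-fraction computation. By the numerical description of \S\ref{enbhd} applied with the bar on position $s$, we have $\Delta=ma+1$, $\Omega=a^2$, $\delta=m-a$ (coming from $\beta_s=1$ and $\alpha_s=m(m-a)-1$), and $\frac{\Delta}{\Omega}=[e_1,\ldots,e_{s-1},e_s-1]$. Since $e_{i+1}=\cdots=e_s=2$, the trailing entry $e_s-1=1$ absorbs via the identity $[\,\ldots,c,1]=[\,\ldots,c-1]$; each collapse manufactures a new trailing $1$ out of the preceding $e_j=2$, which collapses again. After $s-i$ collapses the process terminates at $e_i-1\geq 2$, leaving $\frac{\Delta}{\Omega}=[e_1,e_2,\ldots,e_{i-1},e_i-1]$. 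This is exactly the continued fraction of the minimal resolution of the extremal P-resolution encoded by $e_1-[e_2,\ldots,e_i-1]$ (for $i\geq 2$; for $i=1$ the data degenerates to a single $(-(e_1-1))$-curve with both Wahl sides smooth).

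The substantive step is to check that $[e_2,\ldots,e_{i-1},e_i-1]$ is itself a Wahl continued fraction when $i\geq 2$. For this I would use the recursive structure of Wahl continued fractions, generated from $[4]$ by the two operations $(A)\colon[c_1,\ldots,c_k]\mapsto[c_1+1,c_2,\ldots,c_k,2]$ and $(B)\colon[c_1,\ldots,c_k]\mapsto[2,c_k,\ldots,c_2,c_1+1]$. Iterating the reverse $(A')$ on $[e_1,\ldots,e_s]$ strips the trailing $2$'s one at a time, yielding $[e_1-(s-i),e_2,\ldots,e_i]$ Wahl of length $i$. An easy induction on $(A),(B)$ shows that any Wahl continued fraction of length $\geq 2$ has exactly one endpoint equal to $2$; combined with $e_i\geq 3$ this forces $e_1-(s-i)=2$. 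One further application of $(B')$ to $[2,e_2,\ldots,e_i]$ yields $[e_i-1,e_{i-1},\ldots,e_2]$ Wahl, and reversing (which preserves Wahl-ness via the conjugation $(m,a)\leftrightarrow(m,m-a)$ of the same singularity) produces $[e_2,\ldots,e_{i-1},e_i-1]$ Wahl, with some associated pair $(m'_1,a'_1)$.

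Reading the same chain of Wahl operations forward from $(m'_1,a'_1)$ to $(m,a)$ gives the relations $m=e_1m'_1-a'_1$ and $a=m'_1$. Substituting into the extremal P-resolution formulas of \S\ref{enbhd} (with $c=e_1$ and $m'_2=a'_2=1$) we get $\Delta_{X^+}={m'_1}^2e_1+1-m'_1a'_1=ma+1=\Delta$ and $\delta_{X^+}=m'_1(e_1-1)-a'_1=m-m'_1=m-a=\delta$. Matching both $\Delta$ and $\delta$ identifies the proposed extremal P-resolution with the one output by the Mori algorithm for the flip of the given \eni, so by Proposition~\ref{flipPres} it is the central fiber $X^+$. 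The main obstacle is the combinatorial verification of the previous paragraph; once $[e_2,\ldots,e_i-1]$ is known to be Wahl, everything else is routine.
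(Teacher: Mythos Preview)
Your proof is essentially correct but takes a more combinatorial route than the paper. The paper's argument is shorter: it observes that the given \eni\ can be regarded as an \emph{initial} \enii\ with $m_1=a_1=1$ and $(m_2,a_2)=(m,m-a)$, checks $\delta m_1-m_2=-a<0$ so that it is flipping, and then reads off the extremal P-resolution directly from the formulas in \textbf{($<$0)}: $m'_2=1$, $m'_1=a$, and $a'_1\equiv -m\pmod a$ with $0<a'_1<a$. A one-line continued-fraction manipulation then identifies $\frac{a^2}{a a'_1-1}$ with $[e_2,\ldots,e_i-1]$. No appeal to the recursive $(A)/(B)$ description of Wahl strings is needed. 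Your approach instead proves that $[e_2,\ldots,e_i-1]$ is a Wahl string by peeling off the tail of $2$'s via reverse Wahl moves; this is valid and pleasantly self-contained, at the cost of extra combinatorics.

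There is, however, a soft spot in your last sentence. Matching $\Delta$ and $\delta$ does \emph{not} by itself pin down the extremal P-resolution: a cyclic quotient singularity can admit two extremal P-resolutions, and when it does they share the same $\delta$ (this is exactly the ``wormhole'' phenomenon of Remark~\ref{rrr} and \cite[\S4]{HTU12}). What rescues your argument is the additional datum $m'_2=1$. The Mori algorithm applied to the degenerate \enii\ with $m_1=1$ forces $m'_2=m_1=1$ by the \textbf{($<$0)} formula; once one side of $X^+$ is known to be smooth, the quadratic $\Delta={m'_1}^2+1+\delta m'_1$ determines $m'_1$ uniquely, and then $\Omega$ fixes $a'_1$. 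So to close the argument you should either invoke the \textbf{($<$0)} formula to obtain $m'_2=1$ (at which point you have essentially reproduced the paper's proof), or else check directly that the Mori sequence of your candidate on the smooth side begins with the given \eni.
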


\begin{proof}
Write $\frac{m^2}{ma-1}=[e_1,\ldots,e_s]$. Notice that according to our numeric description for \eni ~we have $\beta_s=1$, $\alpha_s=m(m-a)-1$, and $\gamma_s=a(m-a)-1$. Therefore $\delta=n-a$, $\Delta=na+1$, and $\Omega=a^2$, following the formulas above. Notice also that in this case the \eni ~we are considering can be seen as an initial \enii ~by taking $m_2=n$, $a_2=n-a$, $m_1=1$, and $a_1=1$. One can recompute that $\delta=n-a$, $\Delta=na+1$, and $\Omega=a^2$ following the formulas above, and that $\delta m_1-m_2=-a <0$, and so it is indeed of flipping type. To compute the numerical data of $X^+$, we use the formulas in \textbf{($<$0)} above: $m'_2=1$, $a'_2=1$, $m'_1=a$, and $0<a'_1<a$ such that $a'_1 \equiv -n ($mod $a)$.

Notice that if $a=1$, then we have our claim. So we assume that $a>1$. Then, by definition, $\frac{\Delta}{\Omega}=[e_1,\ldots,e_i-1]$, and so $\frac{na+1}{a^2}=[e_1,\ldots,e_i-1]$. This gives $$\frac{a^2}{a(ae_1-n)-1}=[e_2,\ldots,e_i-1].$$ But $e_1$ is the integral part of $\frac{n^2}{na-1}$ plus $1$, and so $0<ae_1-n<a$. Therefore, when $a>1$, we have precisely $a'_1=ae_1-n$, and our claim follows.
\end{proof}

A corollary is the useful fact (to be used in the next sections)

\begin{proposition}
\cite[p.188]{HP2010} Let $\widetilde{Y}$ be a smooth surface with a chain of rational smooth curves $E_1,\ldots,E_s$, which is the exceptional divisor of a Wahl singularity. Let $C_1, C_2$ be $(-1)$-curves in $\widetilde{Y}$ such that $C_1 \cdot C_2 =0$, $C_1 \cdot E_1=1$, and $C_2 \cdot E_s=1$, and $C_1, C_2$ do not intersect any other $E_i$'s. Let $\sigma \colon \widetilde{Y} \to Y$ be the contraction of the chain $E_1,\ldots,E_s$ (to a Wahl singularity), and let $C_0=\sigma(C_1) \cup \sigma(C_2)$. Assume there is a $\Q$-Gorenstein smoothing $(Y \subset \Y) \to (0 \in \D)$.

Then there is a $(-1)$-curve $C_t$ in the smooth fiber over $t \in \D \setminus \{0\}$ which degenerates to $C_0$.
\label{(-1)}
\end{proposition}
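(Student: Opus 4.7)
The plan is to verify $C_0^2 = -1$ on $Y$ and that $C_0$ deforms to a smooth curve in the general fiber; the conclusion then follows by deformation invariance of self-intersection.

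For the self-intersection, write the Wahl singularity as $\frac{1}{n^2}(1, na-1)$. Since $C_1, C_2$ meet $E_1, E_s$ exactly as the coordinate axes $E_0, E_{s+1}$ from \S\ref{cyclic}, the pullback formulas there give
$$\sigma^* C_0 = C_1 + C_2 + \sum_{i=1}^s \frac{\alpha_i + \beta_i}{n^2}\, E_i.$$
Using $\sigma^* C_0 \cdot E_i = 0$ for each exceptional $E_i$, one gets $C_0^2 = \sigma^*C_0 \cdot (C_1 + C_2) = -2 + \frac{\alpha_1+\beta_1+\alpha_s+\beta_s}{n^2}$. For the Wahl singularity, $(\alpha_1, \beta_1) = (1, na-1)$ and $(\alpha_s, \beta_s) = (n(n-a)-1, 1)$ (using $q^{-1} \equiv n(n-a)-1 \pmod{n^2}$), so the numerator is $na + n(n-a) = n^2$, yielding $C_0^2 = -1$.

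For the deformation, observe that $(Y, C_0)$ is locally at the Wahl point $P$ the $\mu_{n^2}$-quotient of $(\C^2, \{xy=0\})$ with weights $(1, na-1)$; the 2-form $\frac{dx \wedge dy}{xy}$ has weight zero, so $K_Y + C_0$ is Cartier near $P$. In Wahl's explicit local model $\Y = \{xy = z^n + t\}/\mu_n$ for the $\Q$-Gorenstein smoothing, the divisor cut out by $\{z = 0\}$ has fiber over $t_0 \in \D$ the curve $\{xy = t_0, z = 0\}/\mu_n$, which is the nodal $C_0$ at $t_0 = 0$ and a smooth irreducible curve for $t_0 \neq 0$. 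Because $K_Y + C_0$ is Cartier, this local deformation extends to a global flat family $\mathcal{C} \subset \Y$ by standard log deformation theory. Hence $C_t := \mathcal{C}_t$ for $t \neq 0$ is a smooth rational curve degenerating to $C_0$, and flatness (combined with $\Q$-Cartierness of $\mathcal{C}$) forces $C_t^2 = C_0^2 = -1$.

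The main obstacle is the global lifting of the local deformation. This is most cleanly handled by recognizing the configuration as a divisorial extremal neighborhood (of type \eni~or \enii) in the 3-fold $\Y$ and appealing to Proposition \ref{divtype}, whose $(-1)$-curve on the general fiber then supplies the desired $C_t$; alternatively, one argues directly via the vanishing of the relevant $H^1$ obstruction for deforming the log pair $(Y, C_0)$.
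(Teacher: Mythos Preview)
Your computation of $C_0^2=-1$ is correct and clean, and the local model $\{xy=z^n+t\}/\mu_n$ with the divisor $\{z=0\}$ does give the desired local picture. The gap is exactly where you flag it: globalizing the deformation of $C_0$ inside $\Y$. Neither of your proposed fixes works as stated. The configuration is \emph{not} a divisorial extremal neighborhood of type \eni\ or \enii: those require the contracted curve to be \emph{irreducible} (Definition~\ref{ExtrNbhd}(2) and Definition~\ref{EN}), whereas $C_0=\sigma(C_1)\cup\sigma(C_2)$ has two components; and each component $\sigma(C_i)$ taken alone gives the \eni\ data $[\overline{e_1},e_2,\ldots,e_s]$ or $[e_1,\ldots,e_{s-1},\overline{e_s}]$, which by Proposition~\ref{specialFlip} is \emph{flipping}, not divisorial, so Proposition~\ref{divtype} does not apply. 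The alternative via $H^1$-vanishing for the log pair $(Y,C_0)$ is not available either: the proposition makes no unobstructedness hypothesis on $Y$, so you cannot assume such vanishing.

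The paper's proof sidesteps the global lifting problem entirely, and this is the key idea you are missing. One takes just \emph{one} component, say $C=\sigma(C_2)$, which gives a flipping \eni\ with data $[e_1,\ldots,\overline{e_s}]$. By Proposition~\ref{specialFlip} the extremal P-resolution has data $e_1-[e_2,\ldots,e_i-1]$; in particular $C^+$ corresponds to $E_1$ and the Wahl singularity of $X^+$ sits at the $E_1\cap E_2$ end. Since $C_1$ meets $E_1$ away from $E_2$, after the flip the strict transform of $\sigma(C_1)$ is a $(-1)$-curve lying entirely in the \emph{smooth} locus of $Y^+$. Such a curve deforms to nearby fibers by the elementary fact that $H^1(\P^1,\O(-1))=0$, no global hypothesis needed. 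Because the flip is an isomorphism on general fibers, this produces the required $(-1)$-curve $C_t$ in the general fiber of the original family $\Y\to\D$, and tracking back through the flip shows its flat limit in $Y$ is $\sigma(C_1)\cup\sigma(C_2)=C_0$. So the correct use of the MMP here is a \emph{flip} on one branch, not a divisorial contraction of the whole $C_0$.
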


\begin{proof}
Notice that $C:=\sigma(C_2)$ defines an \eni ~of flipping type as in Proposition \ref{specialFlip}. After we perform the flip, we obtain a surface $Y^+$ (from the corresponding extremal P-resolution) and the strict transform of $\sigma(C_1)$ in $Y^+$ does not pass through the singularity. Therefore the $\Q$-Gorenstein smoothing of $Y^+$, which gives the flip, would have a $(-1)$-curve $C_t$ in the general fiber that deforms to $\sigma(C_2)$. It is clear that in $(Y \subset \Y) \to (0 \in \D)$ this $(-1)$-curve degenerates to $C_0$.
\end{proof}

\section{Method of identification} \label{identify}

We now explain the method to identify stable surfaces around the stable model of a given Lee-Park surface, i.e., a normal projective surface with only Wahl singularities, and no local-to-global obstructions (any local deformation of its singularities may be globalized).

\subsection{Stable model of a Lee-Park surface} \label{method1} Let $W$ be a normal projective surface with only Wahl singularities, and $H^2(W,T_W)=0$. Then $W$ has no local-to-global obstructions; see \cite[\S2]{LP07}. We remark that the vanishing of $H^2(W,T_W)$ is commonly achieved by the vanishing of  $H^2(\widetilde{W},T_{\widetilde{W}}(- \log E))$, where $E$ is the exceptional divisor of the minimal resolution $\widetilde{W} \to W$; see \cite[Thm.2]{LP07}. Assume that $K_W$ is nef, and that $K_W^2 >0$. Let $(W \subset \W) \to (0 \in \D)$ be a $\Q$-Gorenstein smoothing of $W$ (Definition \ref{def}). Then we know that the general fiber $W'$ has $K_{W'}$ nef (see \cite[p.499]{LP07}), and $K_{W}^2=K_{W'}^2>0$. Thus $W'$ is a minimal surface of general type.

The canonical class $K_W$ may not be ample. To find the stable model $\overline{W}$ of $W$, one considers the relative canonical model of $(W \subset \W) \to (0 \in \D)$. The following lemma tells us what type of singularities we can expect in $\overline{W}$.

\begin{lemma}
The relative canonical model $(\overline{W} \subset \overline{\W}) \to (0 \in \D)$ of the $(W \subset \W) \to (0 \in \D)$ above has as central fiber a normal projective surface $\overline{W}$ with only T-singularities (Definition \ref{T-sing}). \label{l2}
\end{lemma}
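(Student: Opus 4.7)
The plan is to run the relative minimal model program on $\pi\colon\W\to\D$ and then apply the slice theorem for terminal $3$-fold singularities to the Cartier central fiber of the resulting canonical model. First, the hypothesis of a $\Q$-Gorenstein smoothing and the description of T-singularities in \cite[Prop.~3.10]{KSB88} say that $\W$ is smooth away from the Wahl singularities of $W$, and at each $\frac{1}{n^2}(1,na-1)$ on $W$ it has a cyclic quotient terminal $3$-fold singularity. Hence $\W$ has only terminal singularities and $K_\W$ is $\Q$-Cartier. Since $K_{W'}$ is nef on the smooth general fiber $W'$, adjunction forces every $K_\W$-negative extremal ray over $\D$ to be represented by a curve contained in the central fiber $W$.

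I would then run the relative MMP for $\W/\D$. By the previous paragraph, each $K_\W$-negative extremal contraction is the $3$-fold realization of the contraction of a curve $C\subset W$ through one or two Wahl singularities, i.e.\ an extremal neighborhood of type \eni~or \enii~as in Definition~\ref{EN}. Propositions \ref{flipPres} and \ref{divtype} tell us the effect on the central fiber: after a flip, a neighborhood of $C$ is replaced by an extremal P-resolution, which has only Wahl singularities by Definition~\ref{EPR}; after a divisorial contraction, $C$ is replaced by a single new Wahl singularity (and a $(-1)$-curve is dropped on the nearby smooth fiber). Normality and projectivity of the central fiber are preserved throughout. The MMP terminates with a relative minimal model; contracting the remaining $K$-trivial rays on the central fiber gives the relative canonical model $\overline{\W}\to\D$, with $K_{\overline{\W}}$ being $\pi$-ample and $\overline{\W}$ having canonical $3$-fold singularities (terminal away from the $K$-trivial contraction locus).

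Finally, $\pi\colon\overline{\W}\to\D$ is flat (no central-fiber component is ever contracted), so $\overline{W}=\pi^{-1}(0)$ is a Cartier divisor on $\overline{\W}$. At any $P\in\overline{W}$, either $\overline{\W}$ is terminal at $P$, in which case \cite[Prop.~3.10]{KSB88} says the Cartier slice $\overline{W}$ is smooth, du Val, or of type $\frac{1}{dn^2}(1,dna-1)$, all T-singularities; or $\overline{\W}$ has a canonical non-terminal singularity at $P$ arising from the $K$-trivial contraction, which is locally a product (du Val)$\times\D$, so again $\overline{W}$ has a du Val (hence T-) singularity there. Therefore $\overline{W}$ is a normal projective surface with only T-singularities.

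The main obstacle is the analysis of the very last step: verifying the local product structure of $\overline{\W}$ along the loci collapsed by the $K$-trivial contraction, and checking that every earlier $K_\W$-negative step really is an extremal neighborhood of type k1A or k2A in the sense of \S\ref{enbhd}. Both are addressed by the HTU12 framework once one uses the adjunction observation that $K_{W'}$ nef confines all $K$-negative rays to the central fiber.
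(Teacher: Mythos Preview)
Your approach is much more elaborate than the paper's and contains both an unnecessary detour and an unjustified step.

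First, recall the standing hypothesis just above the lemma: $K_W$ is already nef. By adjunction $K_{\W}|_{W_t}=K_{W_t}$ is nef on every fiber, so $K_{\W}$ is relatively nef over $\D$ from the start. There are therefore no $K_{\W}$-negative extremal rays at all, and your entire second paragraph---running flips and divisorial contractions of type \eni\ or \enii\ via Propositions~\ref{flipPres} and~\ref{divtype}---is vacuous in this setting. Only the crepant contraction of the $K$-trivial locus is relevant.

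Second, your dichotomy in the final step is asserted but not justified. You claim that if $\overline{\W}$ is canonical but non-terminal at $P$ then it is locally (du Val)$\times\D$; you give no argument for this product structure, and saying it is ``addressed by the HTU12 framework'' is not a proof. Note moreover that the interesting new singularities of $\overline{W}$ are precisely the non-du-Val $\frac{1}{dn^2}(1,dna-1)$ that appear when a $K_W$-trivial chain through Wahl points is contracted (this is the M-resolution picture described immediately after the lemma). These must fall under your Case~1, so you would still have to verify that $\overline{\W}$ is terminal there, which you do not do.

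The paper bypasses all analysis of the $3$-fold singularities of $\overline{\W}$. Its proof is three lines: the relative canonical model $\overline{\W}$ exists with $K_{\overline{\W}}$ $\Q$-Cartier and relatively ample \cite{KM1998}; the induced surface morphism $W\to\overline{W}$ preserves log terminality \cite[pp.~102--103]{KM1998}, so $\overline{W}$ has only quotient singularities; and a quotient surface singularity sitting on the central fiber of a $\Q$-Gorenstein smoothing is a T-singularity by \cite[\S5.2]{KSB88}. This is both shorter and sidesteps the very obstacle you flagged in your last paragraph.
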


\begin{proof}
We know there is $(\overline{W} \subset \overline{\W}) \to (0 \in \D)$; cf. \cite{KM1998}. We have a birational morphism $\W \to \overline{\W}$ over $\D$ such that $K_{\overline{\W}}$ is $\Q$-Cartier and ample. Notice that $\overline{W}$ has log terminal singularities because $W$ does \cite[pp.102--103]{KM1998}. The singularities of $\overline{W}$ must be T-singularities by \cite[\S5.2]{KSB88}.
\end{proof}

Thus $\overline{W}$ can have only du Val singularities, and cyclic quotient singularities $\frac{1}{dn^2}(1,dna-1)$ with gcd$(n,a)=1$. In addition, locally around each singularity of $\overline{W}$, we have that $W \to \overline{W}$ is a \emph{M-resolution}; see \cite{BC94}. We will use that interpretation below.

The surface $\overline{W}$ is a point in the KSBA compactification of the moduli space of surfaces of general type $\overline{\M}_{K_W^2,\chi(\O_W)}$ with fixed topological invariants $K_W^2$, $\chi(\O_W)$; cf. \cite{H2011}. We know that $\overline{\M}_{K_W^2,\chi(\O_W)}$ at $\overline{W}$ is locally a finite quotient of the smooth germ $\Def^{\QG}(\overline{W})$ of dimension $10 \chi(\O_W)- 2 K_W^2 $, where $\Def^{\QG}(\overline{W})$ is the versal $\Q$-Gorenstein deformation space of $\overline{W}$. The smoothness follows from $H^2(\overline{W},T_{\overline{W}})=0$ (which follows from $H^2(W,T_W)=0$); see \cite[Sect.3]{H2011}. The local dimension is a Riemann-Roch calculation: see the proof of \cite[Prop.2.2]{PSU} for example.

The following lemma will be used to identify $\overline{W}$ in sections \S \ref{s1}, \S \ref{s2}, and \S \ref{s3}.

\begin{lemma}
Let $Z \to \P^1$ be an elliptic fibration, where $Z$ is a rational smooth projective surface. Assume it has two fibers $F_1$, $F_2$ of type $I_1$, and two sections $P,Q$. Let $Z''$ be the surface obtained by blowing up the nodes of both $F_1$ and $F_2$ in $Z$, and blowing down $P$ and $Q$. Then $Z''$ is a Halphen surface \cite[\S2]{CD12} of index $2$, i.e., $Z''$ has an elliptic fibration with a unique multiple fiber of multiplicity $2$. The curve $F_1+F_2$ in $Z''$ is a non-multiple fiber of type
$I_2$.
\label{l1}
\end{lemma}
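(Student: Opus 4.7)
The plan is to track $Z''$ back through the two intermediate steps. Let $\pi\colon Z'\to Z$ be the blowup of the two nodes with exceptional curves $E_1,E_2$, and $\rho\colon Z'\to Z''$ the contraction of the (strict transforms of the) sections $P,Q$, which are still $(-1)$-curves in $Z'$ since they do not meet the nodes. Because $Z$ is a minimal rational elliptic surface with a section we have $K_Z=-F$ for $F$ a general fibre, so $K_{Z'}=-F+E_1+E_2$ and $\rho^*K_{Z''}=-F+E_1+E_2-P-Q$. In particular $K_{Z''}^2=0$ and $Z''$ is rational.

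Next I would identify the class and Kodaira type of $F_1+F_2\subset Z''$. Since $P,Q$ are disjoint from $E_1,E_2$ in $Z'$ and meet each $\tilde F_i$ transversally in one point, $\rho^*F_i=\tilde F_i+P+Q$, yielding $F_i^2=-2$ and $F_1\cdot F_2=2$ in $Z''$, the intersection pattern of an $I_2$ configuration. Using $\tilde F_1+2E_1\sim \tilde F_2+2E_2\sim F$ in $Z'$,
$$\tilde F_1+\tilde F_2+2P+2Q\ \sim\ 2(F-E_1-E_2+P+Q)\ =\ -2\,\rho^*K_{Z''},$$
so pushing down gives $F_1+F_2\sim -2K_{Z''}$ on $Z''$.

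Now I would argue that $|-2K_{Z''}|$ is a base-point-free pencil. Riemann-Roch and Serre duality (using rationality of $Z''$ to kill $h^2$) give $h^0(-K_{Z''})\geq \chi(-K_{Z''})=1$, so there is an effective $D\in |-K_{Z''}|$. Then $2D$ and $F_1+F_2$ are two distinct members of $|-2K_{Z''}|$ (the latter being reduced). Their intersection number is $-4K_{Z''}^2=0$; to rule out a common component, note that if $F_1\leq D$ then $(D-F_1)\cdot F_2=-F_1\cdot F_2=-2<0$ forces $F_2\leq D$ as well, so $D-F_1-F_2\sim K_{Z''}$ is effective, contradicting $p_g(Z'')=0$ (and symmetrically with the roles of $F_1,F_2$ swapped). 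Hence $2D$ and $F_1+F_2$ are disjoint and the pencil is base-point-free.

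Finally, the morphism $\phi\colon Z''\to \P^1$ induced by this pencil has general fibre of arithmetic genus $1$ by adjunction, so it is an elliptic fibration; $F_1+F_2$ is a reduced $I_2$ fibre, while $2D$ is a fibre of multiplicity at least $2$. By the canonical bundle formula for rational elliptic fibrations, the sum $\sum(1-1/m_i)$ over the multiplicities of the multiple fibres is strictly less than $1$, so there is at most one multiple fibre; hence there is exactly one, of multiplicity $2$, and $Z''$ is a Halphen surface of index $2$ with $F_1+F_2$ a non-multiple $I_2$ fibre. The main difficulty in this plan is the shared-component exclusion, since that is the step that upgrades ``two distinct divisors in $|-2K_{Z''}|$'' to ``base-point-free pencil giving a morphism to $\P^1$''.
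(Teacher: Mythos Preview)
Your argument is correct and takes a genuinely different route from the paper. The paper blows $Z$ down to $\P^2$ (contracting $P,Q$ first), realizes the original fibration as a pencil of plane cubics $\{af_1+bf_2\}$, and then \emph{constructs} the Halphen pencil explicitly: the unique cubic $\Lambda$ through the two nodes and the seven remaining base points yields the index-$2$ Halphen pencil $\{cf_1f_2+d\lambda^2\}$, whose associated surface is exactly $Z''$. Your approach instead stays on $Z''$, computes that $F_1+F_2\sim-2K_{Z''}$ with the $I_2$ intersection pattern, produces a second member $2D$ of $|-2K_{Z''}|$ from Riemann--Roch, and then invokes the canonical bundle formula. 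The paper's argument is concrete and exhibits the pencil by hand; yours is intrinsic and would transfer more readily to settings without a convenient plane model.

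Two small points you glossed over. First, the canonical bundle formula requires $\phi$ to be relatively minimal; this holds because any curve $C$ contained in a fiber has $K_{Z''}\cdot C=-\tfrac12\,F'\cdot C=0$, ruling out $(-1)$-curves in fibers, but you should say so. Second, from $\sum(1-1/m_i)<1$ you correctly get at most one multiple fiber, and $2D$ exhibits one of multiplicity at least $2$; to pin down $m=2$ you need the sharper input $-2K_{Z''}\equiv F'$, which forces $\sum(1-1/m_i)=\tfrac12$ and hence $m=2$. The shared-component exclusion you flagged as the crux is fine as written.
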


\begin{proof}
Let $\pi \colon Z \to \P^2$ be a blow-down to $\P^2$ starting with the sections $P,Q$ (see proof of \cite[Prop.2.2]{CD12} for
example). Then, the elliptic fibration $Z \to \P^1$ comes from the pencil of cubics $$\{ af_1+bf_2 \colon (a:b) \in \P^1 \},$$ where
$f_1$, $f_2$ are the cubic polynomials of the images of $F_1$, $F_2$ under $\pi$. Notice that the node of $F_i$ is not in $F_j$
for $i\neq j$. Hence there exists a unique cubic $\Lambda$ passing through the node of $F_1$, the node of $F_2$, and the $7$ base
points of the pencil above not including the ones corresponding to $P$ and $Q$. This gives the existence of the Halphen pencil of
index $2$ $$\{ c f_1 f_2 + d \lambda^2 \colon (c:d) \in \P^1 \}$$ where $\lambda=0$ is the
equation of $\Lambda$. The associated Halphen surface is the $Z''$ described in the statement of this lemma.
\end{proof}

\subsection{Partial $\Q$-Gorenstein smoothings} \label{method2}

Each of the non du Val T-singularities $\frac{1}{dn^2}(1,dna-1)$ of $\overline{W}$ defines a divisor $\DD {n \choose a}$ in $\overline{\M}_{K_W^2,\chi(\O_W)}$. A general point in this divisor represents a normal KSBA surface with one Wahl singularity $\frac{1}{n^2}(1,na-1)$. Our main goal is to identify as much as possible the smooth minimal model of that surface.

\begin{remark}
Du Val singularities have simultaneous resolutions in deformations. Thus we know that a $\Q$-Gorenstein smoothing of all the non du Val T-singularities of $\overline{W}$ has as general fiber the canonical model of a smooth projective surface of general type with invariants $K_W^2$ and $\chi(\O_W)$. There is no identification problem in this case.
\end{remark}

The divisor $\DD {n \choose a}$ is defined in the following way. We have $\overline{W}$ with no local-to-global obstructions. We consider a $\Q$-Gorenstein deformation of $\overline{W}$ which locally deforms a given T-singularity $\frac{1}{dn^2}(1,dna-1)$ into $\frac{1}{n^2}(1,na-1)$ (see \cite[\S2.1]{BC94} or \cite[Prop.2.3]{HP2010}), and smooths up all other singularities of $\overline{W}$. The general fiber of this deformation is a KSBA surface with one Wahl singularity. This surface defines the divisor $\DD {n \choose a}$; see \cite[\S4]{H2011}. To identify it, we will run MMP. But we will use another suitable family to run it, because we want to use only birational operations to type \eni~ and \enii; cf. \cite[\S5]{HTU12}. We explain that below.

Locally at each T-singularity, the birational map $W \to \overline{W}$ is an M-resolution. In particular, du Val singularities are resolved, and over a singularity of type $\frac{1}{dn^2}(1,dna-1)$ we have $d$ Wahl singularities of type $\frac{1}{n^2}(1,na-1)$.

\begin{lemma}
Any $\Q$-Gorenstein deformation of $\overline{W}$ is induced by blowing down a $\Q$-Gorenstein deformation of $W$.
\end{lemma}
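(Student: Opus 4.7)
The plan is to reduce the global statement to the local claim about M-resolutions, and then to globalize using the vanishing $H^2(W,T_W)=0$ inherited from the hypotheses.

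First, I recall the local structure of $\pi\colon W\to \overline{W}$. Over each Du Val singularity of $\overline{W}$, $\pi$ is the minimal resolution, and over each non Du Val T-singularity $(Q\in \overline{W})$ of type $\frac{1}{dn^2}(1,dna-1)$, the map $\pi$ is, locally, the M-resolution of Behnke--Christophersen \cite{BC94}: it contracts chains of rational curves to yield exactly $d$ Wahl singularities $\frac{1}{n^2}(1,na-1)$. The first key local fact I would use is that, for any M-resolution $X\to Y$ of a quotient singularity, the natural blow-down map of versal $\Q$-Gorenstein deformation spaces
$$ \Defq(X)\longrightarrow \Defq(Y) $$
is smooth and surjective. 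This follows directly from \cite{BC94}: the M-resolution is, by construction, the partial resolution whose $\Q$-Gorenstein deformations contain smoothings of \emph{all} the nodes that separate it from the various P-resolutions, hence it dominates (in particular, surjects onto) every irreducible component of $\Def(Y)$ meeting the $\Q$-Gorenstein locus. For Du Val $Y$ the map is even an isomorphism. Call this \emph{Step~1}, the local step.

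Next, in \emph{Step~2}, I would globalize. Let $\T^1_{W}$ and $\T^1_{\overline{W}}$ denote the sheaves of local $\Q$-Gorenstein first-order deformations of $W$ and $\overline{W}$ respectively (supported on the singular loci). The blow-down $\pi$ induces a map of local-to-global spectral sequences with exact rows
\begin{equation*}
\begin{CD}
H^1(W,T_W) @>>> \Defq(W) @>>> H^0(W,\T^1_{W}) @>>> H^2(W,T_W) \\
@VVV @VVV @VVV @VVV \\
H^1(\overline{W},T_{\overline{W}}) @>>> \Defq(\overline{W}) @>>> H^0(\overline{W},\T^1_{\overline{W}}) @>>> H^2(\overline{W},T_{\overline{W}}).
\end{CD}
\end{equation*}
By hypothesis $H^2(W,T_W)=0$, which gives $H^2(\overline{W},T_{\overline{W}})=0$ as well (via the Leray spectral sequence for $\pi$, noting $R^1\pi_* T_W$ is supported on finitely many points and contributes only in high degree that already vanishes). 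Step~1, applied stalk by stalk, shows that the third vertical arrow is surjective. A diagram chase, together with surjectivity of $H^1(W,T_W)\to H^1(\overline{W},T_{\overline{W}})$ that one extracts from the same Leray sequence, yields surjectivity of $\Defq(W)\to \Defq(\overline{W})$ on tangent spaces. Since both deformation functors are smooth (their $\Def^{\QG}$ spaces are smooth germs by the same $H^2$-vanishings), surjectivity on tangent spaces upgrades to surjectivity of the map of germs, and hence of families.

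Finally, in \emph{Step~3}, given an arbitrary $\Q$-Gorenstein deformation $(\overline{W}\subset \overline{\W})\to (0\in \D)$, Step~2 produces a $\Q$-Gorenstein deformation $(W\subset \W)\to (0\in \D)$ whose image in $\Defq(\overline{W})$ is the class of $\overline{\W}$. To conclude I would check that $\W$ actually blows down to $\overline{\W}$ over $\D$: by construction $K_W$ is $\pi$-nef and $\pi$-big (in fact $\pi$-trivial on Du Val strata and strictly positive on the Wahl chains collapsed to non Du Val T-points, by the M-resolution property), so this persists in the family by openness of ampleness, and the relative canonical model of $\W/\D$ is uniquely determined and must coincide with $\overline{\W}$ by uniqueness of canonical models \cite{KM1998}.

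The main obstacle, as expected, is the global Step~2: the diagram chase requires that the comparison of the Leray terms $H^i(R^j\pi_* T_W)$ with $H^i(T_{\overline{W}})$ really does produce the needed surjectivity in degree one, rather than merely in degree zero. One has to check carefully that the only obstructions to lifting a class from $H^0(\overline{W},\T^1_{\overline{W}})$ to $H^0(W,\T^1_W)$ (beyond the local ones handled in Step~1) land in $H^2(W,T_W)$, which vanishes by hypothesis; this is where the assumption that $W$ has no local-to-global obstructions is used in full force.
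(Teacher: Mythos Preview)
Your approach matches the paper's, which simply defers to \cite[Lemma~5.2]{HTU12} and invokes the local M-resolution picture together with the blow-down deformation result of \cite{BC94}. You are unpacking that citation: local surjectivity for M-resolutions (Step~1), globalization via unobstructedness (Step~2), and identification of the blown-down family (Step~3).

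The difficulty you flag in Step~2 is genuine, and the Leray argument as you phrase it does not quite deliver what you need. The Leray edge map for $\pi$ applied to $T_W$, combined with the identification $\pi_*T_W\cong T_{\overline W}$ (valid for partial resolutions of rational singularities), produces an \emph{injection} $H^1(\overline W,T_{\overline W})\hookrightarrow H^1(W,T_W)$, which goes in the opposite direction to the blow-down map you want surjective. What closes the gap is the observation that this Leray injection is in fact a section of the blow-down map: a locally trivial first-order deformation of $\overline W$ lifts to $W$ by performing the M-resolution in the family (the local singularity types are unchanged, hence so is the M-resolution), and blowing that lift back down recovers the original class. This gives split surjectivity of the left vertical arrow in your diagram, after which the snake-lemma chase goes through. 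Your Step~3 via the relative canonical model is a clean way to conclude.
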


\begin{proof}
See proof of \cite[Lemma 5.2]{HTU12}. We use the local picture of $M$-resolutions, and the blowing-down deformation result of \cite{BC94}.
\end{proof}

Therefore, to identify the general surface in $\DD {n \choose a}$ we consider a $\Q$-Gorenstein deformation of $W$ which is locally trivial on one of the Wahl singularities $\frac{1}{n^2}(1,na-1)$ above the given T-singularity $\frac{1}{dn^2}(1,dna-1)$, and smooths up all other singularities in $W$. It does not matter which Wahl singularity we choose over $\frac{1}{dn^2}(1,dna-1)$, we will always land in the same divisor $\DD {n \choose a}$. This is because locally the blowing-down deformation includes a transitive action on the $d$ Wahl singularities; see \cite[\S2]{BC94}.

Let $X_0 \to W$ be the resolution of the chosen Wahl singularity $\frac{1}{n^2}(1,na-1)$. Since the above $\Q$-Gorenstein deformation of $W$ is trivial around this Wahl singularity, we can and do resolve it simultaneously. With this, we obtain a $\Q$-Gorenstein smoothing $(X_0 \subset \X_{0}) \to (0 \in \D)$ such that the general fiber $X'_{0}$ is the minimal resolution of the surface we want to identify (and so contains the exceptional divisor of $\frac{1}{n^2}(1,na-1)$).
\subsection{Running MMP explicitly} \label{method3}

If $K_{X_0}$ is not nef, then we run the explicit MMP in \cite[\S5]{HTU12} on the extremal neighborhood defined by $$(X_0 \subset \X_{0}) \to (0 \in \D).$$ Our purpose is to find the relative minimal model of $\X_{0} \to \D$. The general fiber of the minimal model will be the minimal model of $X'_0$.

There will be several flips and divisorial contractions over $\X_{0}$, all of them of type \eni ~or \enii; cf. \cite[Thm.5.3]{HTU12}. For each birational operation, we denote the corresponding $\Q$-Gorenstein smoothing by $(X_i \subset \X_{i}) \to (0 \in \D)$, whose general fiber is $X'_i$.

After certain finite $n$ steps, two situations may arise: we have that $(X_n \subset \X_{n}) \to (0 \in \D)$ has either $K_{X_n}$ nef, or the surface $X_n$ is smooth. In the latter, we have a smooth deformation, and so the Kodaira dimension of $X_n$ and $X'_n$ coincide. In this case we will be able to identify the minimal model of $X'_n$, since any possible $(-1)$-curve in $X_n$ lifts to a $(-1)$-curve in $X'_n$; cf. \cite[IV \S4]{BHPV04}. If, on the other hand, we have $X_n$ singular but $K_{X_n}$ nef, then the general fiber is the minimal model we wanted to find.

We construct $X_i$ from $X_{i-1}$ via the following procedure: if we are not in one of the above situations, then in $X_{i-1}$ there is a a smooth rational curve $C_{i} \subset X_{i-1}$ such that $C_{i} \cdot K_{X_{i-1}} <0$ and $C_{i} \cdot C_{i} < 0$, which is as in \S\ref{enbhd}. Hence $C_{i}$ becomes a $(-1)$-curve in the minimal resolution of the Wahl singularities it contains. After we perform the birational operation, we have two possibilities for the new $(X_i \subset \X_{i}) \to (0 \in \D)$: it is the result of either a divisorial contraction, and so between general fibers we have the blow-down of a $(-1)$-curve $X'_{i-1} \to X'_{i}$ (Proposition \ref{divtype}), or a flip, so that the general fibers $X'_{i-1}$, $X'_{i}$ are isomorphic. We find $X_{i}$ as the $X_{i-1}^+$ of the flip, see \S\ref{enbhd}.

Notice that in both cases the surface $X_i$ is birational to $X_{i-1}$. The operations roughly are: minimally resolve $X_{i-1}$ at the Wahl singularities in $C_i$, then contract the strict transform of $C_i$ and all other $(-1)$-curves coming from the exceptional divisor, then perform certain other blow-ups required to find the corresponding extremal P-resolution (this is not required in case of divisorial contraction), and finally contract the configurations corresponding to the Wahl singularities we need for $X^+=X_i$. These birational operations modify curves only over $C_i$. In particular the transformations on $X_{i-1}$ do not affect singularities outside of $C_i$.

Also, since the amount of information is big, we will codify all birational operations in dots diagrams, which are explained in detail in \cite[Notation 5.5]{HTU12}. They basically show the transformation of relevant curves under flips and divisorial contractions in the minimal resolution of $X_i$. In the next sections we will do this using Lee-Park surfaces.

One may wonder at this point what sort of surfaces with only Wahl singularities one can expect in the KSBA boundary. The following proposition, due to Kawamata \cite{K92}, says that at least there is a hierarchy with respect to $K^2$ and the Kodaira dimension.

\begin{proposition}
Let $(W \subset \W) \to (0 \in \D)$ be a $\Q$-Gorenstein smoothing of a normal singular projective surface $W$ with only Wahl singularities. Let $\widetilde{W}$ be the minimal resolution of $W$, and let $Z$ be the smooth minimal model of $\widetilde{W}$. Assume that $K_{\W}$ is relatively nef. If $Z$ is of general type, then the general fiber $W'$ is of general type and $K_{W'}^2=K_{W}^2 > K_{Z}^2$. \label{p5}
\end{proposition}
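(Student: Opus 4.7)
The plan has three parts, matching the three assertions of the proposition.

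\textbf{Kodaira dimension of $W'$.} I would apply Kawamata's deformation invariance of plurigenera for $\Q$-Gorenstein families with relatively nef canonical divisor \cite{K92}: the plurigenera $P_m(W_t) = h^0(W_t, \O(mK_{W_t}))$ are locally constant in $t$, hence so is $\kappa(W_t)$. Since $W$ is birational to its smooth minimal model $Z$ through $\widetilde{W}$, $\kappa(W') = \kappa(W) = \kappa(Z) = 2$, so $W'$ is of general type.

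\textbf{Invariance $K_{W'}^2 = K_W^2$.} Because $K_{\W}$ is $\Q$-Cartier and $\W \to \D$ is flat, the self-intersection $(K_{\W})^2 \cdot [W_t]$ is locally constant in $t$; adjunction on the fiber (a Cartier divisor pulled back from $\D$) gives $K_{W_t} = K_{\W}|_{W_t}$, whence $K_{W'}^2 = K_W^2$.

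\textbf{Strict inequality $K_W^2 > K_Z^2$.} Let $\sigma \colon \widetilde{W} \to W$ be the minimal resolution and $\pi \colon \widetilde{W} \to Z$ the contraction to the smooth minimal model. I would write
$$K_{\widetilde{W}} = \sigma^* K_W - D = \pi^* K_Z + F,$$
with $D \geq 0$ supported on the $\sigma$-exceptional Wahl chains (with all coefficients strictly in $(0,1)$, so $D \neq 0$ since $W$ has a non-du-Val Wahl singularity) and $F \geq 0$ the $\pi$-exceptional divisor. Then $\sigma^* K_W = \pi^* K_Z + F + D$. Using $\sigma^* K_W \cdot D = 0$, $\pi^* K_Z \cdot F = 0$, and $F^2 = -k$ (with $k$ the number of blow-ups in $\pi$), squaring yields
$$K_W^2 - K_Z^2 = -D^2 + F^2 = -D^2 - k.$$
Nefness of $K_W$ gives $\sigma^* K_W \cdot F \geq 0$, which rearranges to $F \cdot D \geq k$; nefness of $K_Z$ gives $\pi^* K_Z \cdot D \geq 0$; and $\sigma^* K_W \cdot D = 0$ expanded reads $-D^2 = F \cdot D + \pi^* K_Z \cdot D \geq k$, giving the weak inequality $K_W^2 \geq K_Z^2$.

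\textbf{Main obstacle.} The weak inequality is formal; the strict version is the subtle point. The equality case $-D^2 = k$ requires $\sigma^* K_W \cdot F = 0$ (each $\pi$-exceptional component lies in the $K_W$-trivial locus of $W$) and $\pi^* K_Z \cdot D = 0$ (each non-$\pi$-exceptional Wahl chain component projects to a $(-2)$-curve on $Z$, since $Z$ is of general type). I would rule this out by a case analysis on the continued-fraction data $[b_1, \ldots, b_s]$ of the Wahl singularity: the discrepancy coefficients being strictly fractional in $(0,1)$, together with the incompatibility between all non-$\pi$-exceptional Wahl chain components projecting to $(-2)$-curves and $W$ having a genuine non-du-Val Wahl singularity, forces the inequality to be strict.
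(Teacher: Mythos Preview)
Your first two parts are fine, and your divisor computation for the weak inequality $K_W^2 \geq K_Z^2$ is correct. The gap is exactly where you flag it: the passage to strict inequality. The proposed ``case analysis on the continued-fraction data'' is not a proof, and the incompatibility you assert does not hold in any obvious way. A Wahl chain component $E_i$ with $E_i^2 = -b_i \leq -3$ can perfectly well project under $\pi$ to a $(-2)$-curve on $Z$, provided $b_i - 2$ of the blow-ups comprising $\pi$ are centred on the image of $E_i$; nothing in the continued fraction alone forbids this. Ruling out the equality configuration $\sigma^*K_W \cdot F = 0$, $\pi^*K_Z \cdot D = 0$ would require a global argument tying the combinatorics of the chain to the nefness of $K_W$ and the geometry of $(-2)$-curves on $Z$, and you have not supplied one.

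The paper sidesteps this entirely by citing a sharper statement than constancy of plurigenera. Kawamata \cite[Lemma~2.4]{K92} gives, for a family as in the hypothesis with non-canonical central fibre, a \emph{strict} inequality $P_m(W') > P_m(Z)$ for all sufficiently large $m$ in an arithmetic progression. Since $K_{W'}$ is nef (restriction of the relatively nef $K_{\W}$), $W'$ is minimal; once both $W'$ and $Z$ are minimal of general type, the plurigenus formula $P_m = \tfrac{m(m-1)}{2}K^2 + \chi$ for $m\geq 2$ converts $P_m(W') > P_m(Z)$ for infinitely many $m$ directly into $K_{W'}^2 > K_Z^2$. So the strictness is already packaged into Kawamata's lemma, and your divisor computation, while instructive, is not needed.
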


\begin{proof}
By Kawamata \cite[Lemma 2.4]{K92}, there exist positive integers $m_1$ and $m_2$ such that the inequalities of $m$-plurigenera $P_m(W') > P_m(Z)$ hold for positive integers $m$ with $m_1$ dividing $m$ and $m_2 < m$. This implies that $W'$ is of general type. Moreover, this inequality becomes \cite[VII Cor(5.4)]{BHPV04}
$\frac{m(m-1)}{2} K_{W'}^2 + \chi(W') > \frac{m(m-1)}{2} K_{Z}^2 + \chi(Z)$ for those $m$, and so we have the claim.
\end{proof}

\begin{remark}
We will show examples where the final family $(X_n \subset \X_{n}) \to (0 \in \D)$ has $X_n$ singular with $K_{X_n}$ nef, and the general fiber $X'_{n}$ is of general type (see Prop. \ref{p6} for the simplest). In this way, the resulting surface $X_n$ represents, after going to KSBA model (Lemma \ref{l2}), a stable surface in the KSBA moduli space which contains $X'_{n}$. Notice that by Prop. \ref{p5}, this KSBA moduli space is not the one we started with.
\label{newfam}
\end{remark}

\section{Elliptic surfaces via $\Q$-Gorenstein smoothings} \label{app1}

This is a complementary section which will be used in the identification of some KSBA surfaces in \S\ref{s1}, \S\ref{s2}, and \S\ref{s3}.

The exceptional divisor of any T-singularity $\frac{1}{d n^2}(1,dna-1)$ can be obtained from an $I_d$ elliptic singular fiber by blowing up over a node. We blow up a node of $I_d$ and subsequent nodes coming from the new $(-1)$-curves. The exceptional divisor appears as the chain of curves of the total
transform of $I_d$ which does not contain the (last) $(-1)$-curve (see \cite[Prop.3.11]{KSB88}). We call this construction a \textit{T-blow-up} of $I_d$. This way of looking at T-singularities is essentially in Kawamata's paper \cite{K92} \footnote{He writes $\frac{1}{r^2}(a,r-a)$ instead of $\frac{1}{r^2}(1,ra^{-1}-1)$, where $0<a^{-1}<r$ and $aa^{-1} \equiv 1($mod $r)$.}.

If $g \colon Z \to B$ is an elliptic fibration over a smooth curve $B$ with a singular fiber $I_d$, then we denote by $\sigma \colon \widetilde{Z} \to Z$ the composition of blow-ups used in a T-blow-up of $I_d$. Let $\{E_1, \ldots, E_s \}$ be the corresponding T-configuration where $\frac{1}{dn^2}(1,dna-1)=[e_1,\ldots,e_s]$, and $E_i^2=-e_i$. Write $\sigma^*(I_d)= \sum_{i=1}^{s+1} \nu_i E_i$, where $E_{s+1}$ is the $(-1)$-curve, and $\nu_i \geq 1$ are integers.

\begin{lemma}
In a situation as above, we have $n=\nu_{s+1}$, $a=\nu_{s+1}-\nu_s$, and the discrepancy of $E_i$ is $-1 + \frac{\nu_i}{\nu_{s+1}}$ for all $i=1,\ldots,s$. \label{numerics}
\end{lemma}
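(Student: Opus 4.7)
The plan is to derive the discrepancy formula first, and then extract $\nu_{s+1} = n$ and $a = \nu_{s+1} - \nu_s$ from it. View $F := \sigma^*(I_d) = \sum_{i=1}^{s+1} \nu_i E_i$ as a fiber of the induced elliptic fibration $\widetilde{Z} \to B$. Two facts are central: (i) $F \cdot E_j = 0$ for every component (since $F$ is a fiber); and (ii) every component $E_j$ satisfies $(F_{\mathrm{red}} - E_j) \cdot E_j = 2$, where $F_{\mathrm{red}} = \sum_{i=1}^{s+1} E_i$. Fact (ii) is the cycle property of the dual graph, proved by induction on the number of T-blow-ups: for $d \ge 2$ it holds already for $I_d$ (a $d$-gon of $(-2)$-curves), and for $d = 1$ it holds right after blowing up the node of $I_1$ (whose strict transform meets the new exceptional curve at two points); each subsequent blow-up of a node replaces one edge of the dual graph by a two-edge path through the new $(-1)$-curve, preserving the degree-$2$ condition at every vertex.

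Combining (i) and (ii) with adjunction ($K_{\widetilde Z} \cdot E_j = -2 - E_j^2$ for the smooth rational $E_j$), one computes for $j = 1,\dots,s$:
\[
\sum_{i=1}^{s+1} \left( -1 + \frac{\nu_i}{\nu_{s+1}} \right)(E_i \cdot E_j) = -F_{\mathrm{red}} \cdot E_j + \frac{1}{\nu_{s+1}} F \cdot E_j = -(2 + E_j^2) = K_{\widetilde Z} \cdot E_j.
\]
The $i = s+1$ coefficient vanishes, and the intersection matrix $(E_i \cdot E_j)_{1 \le i, j \le s}$ of the contracted T-configuration is negative definite, so the discrepancies of $E_1,\dots,E_s$ are uniquely determined by this system and equal $-1 + \nu_i/\nu_{s+1}$.

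To pin down $\nu_{s+1}$, I would note that the strict transform of any component of $I_d$ keeps multiplicity $1$ in $F$ (strict transforms retain their coefficients under blow-ups), so $\gcd(\nu_1,\dots,\nu_{s+1}) = 1$. Hence $\nu_{s+1}$ is exactly the common denominator of the discrepancies, i.e., the Cartier index of $K_Y$ at the T-singularity $(Q \in Y)$. For $\frac{1}{m}(1,q)$ this Cartier index equals $m/\gcd(m, q+1)$; with $m = dn^2$, $q+1 = dna$, and $\gcd(n,a) = 1$, one gets $dn^2/dn = n$.

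Finally, applying the discrepancy formula to $E_s$ together with $-1 + (1 + q^{-1})/(dn^2)$ from \S\ref{cyclic}, and the modular check $(dna-1)(dn(n-a)-1) \equiv 1 \pmod{dn^2}$ (so $q^{-1} = dn(n-a) - 1$), yields $\nu_s/n = (n-a)/n$, hence $\nu_s = n - a$ and $a = \nu_{s+1} - \nu_s$. The main obstacle is the inductive verification of (ii) and of $\gcd(\nu_i) = 1$; once those combinatorial invariants are secured, the rest is a clean combination of adjunction, the explicit discrepancy formula of \S\ref{cyclic}, and the standard Cartier index computation for cyclic quotient singularities.
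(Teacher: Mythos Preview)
Your argument is correct and takes a genuinely different route from the paper. The paper proceeds by induction on the number of blow-ups, invoking \cite[Lemma 3.4]{Stevens1989} to track how the continued-fraction data and discrepancies change at each step; the base case is the single blow-up producing $[4]$ or $[3,2,\dots,2,3]$, where all discrepancies are $-1/2$.

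Your approach instead exploits the elliptic-fibration structure directly: because $F=\sigma^*(I_d)$ is a full fiber, $F\cdot E_j=0$, and because the dual graph of $F$ is a cycle, $F_{\mathrm{red}}\cdot E_j = 2+E_j^2$; combining these with adjunction solves the discrepancy equations in one stroke. You then identify $\nu_{s+1}$ with the Gorenstein index via the observation that some strict transform of an $I_d$-component has $\nu_i=1$ (which indeed forces the least common denominator of the $\nu_i/\nu_{s+1}$ to be exactly $\nu_{s+1}$), and finish by reading off $\nu_s$ from the explicit formula $-1+(\beta_s+\alpha_s)/m$ of \S\ref{cyclic}. This avoids the external reference to Stevens and uses only ingredients already present in the paper (\S\ref{cyclic}) together with the standard index formula $m/\gcd(m,q+1)$ for $\frac{1}{m}(1,q)$. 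The paper's inductive proof is shorter to state but relies on an outside lemma; your argument is more self-contained and makes transparent why the multiplicities in the fiber govern the discrepancies.
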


\begin{proof}
The proof is based in \cite[Lemma 3.4]{Stevens1989} and induction on the number of blow-ups. If we have only one blow-up, i.e. the T-singularity is either $[4]$ or $[3,2,...,2,3]$, then the discrepancies are all $-\frac{1}{2}$, which agrees with our claim. Hence, using the hypothesis of induction for a length $s$ T-singularity, one can easily see by \cite[Lemma 3.4]{Stevens1989} that for a length $s+1$ T-singularity we have our claim.
\end{proof}

\begin{theorem}
Let $g \colon Z \rightarrow \P^1$ be a relatively minimal elliptic fibration with a section, such that $Z$ is a rational smooth projective surface.

\vspace{0.1cm}

$(-\infty):$ Assume $g$ has a fiber of type $I_d$. Consider a T-blow-up of $I_d$ with the notation above. Let $\widetilde{Z} \to W$ be the contraction of the T-configuration. Then there are $\Q$-Gorenstein smoothings $W'$ of $W$, and any such $W'$ is rational.

\vspace{0.1cm}

$(0):$ Assume $g$ has two fibers $I_{d_1}$ and $I_{d_2}$.
Let $\widetilde{Z}$ be the blow-up of $Z$ at one node of $I_{d_1}$ and at one
node of $I_{d_2}$. Hence we have two T-configurations of type
$\frac{1}{4d_i}(1,2d_i-1)$. Let $\widetilde{Z} \to W$ be the contraction of these
configurations. Then there are $\Q$-Gorenstein smoothings $W'$ of
$W$, and any such $W'$ is an Enriques surface.

\vspace{0.1 cm}

$(1):$ Assume it has two fibers $I_{d_1}$ and $I_{d_2}$. We
apply T-blow-ups to each of them. Assume that for one of them we
blew-up at least twice. Let $\widetilde{Z} \to W$ be the contraction of both
T-configurations. Then there are $\Q$-Gorenstein smoothings
$W'$ of $W$, and any such $W'$ has Kodaira dimension $1$.

\label{t0}
\end{theorem}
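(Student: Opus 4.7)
My plan is to push the elliptic fibration $g$ down to a morphism $\pi \colon W \to \P^1$ (possible because every T-configuration lies inside a single fiber of $\widetilde{Z} \to \P^1$), then arrange a $\Q$-Gorenstein smoothing $(W \subset \W) \to (0 \in \D)$ in which the fibration extends, and finally apply Kodaira's canonical bundle formula to the resulting elliptic surface $W' \to \P^1$. By Lemma~\ref{numerics}, the scheme-theoretic fiber of $\pi$ over each T-blown-up point is $n \cdot E_{s+1}$, where $E_{s+1}$ is the image in $W$ of the last $(-1)$-curve (passing through the T-singularity $\frac{1}{dn^2}(1, dna-1)$); all other fibers of $\pi$ coincide with those of $g$.

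Existence of $\Q$-Gorenstein smoothings reduces to $H^2(W, T_W) = 0$, and this in turn reduces via standard sequences to $H^2(\widetilde{Z}, T_{\widetilde{Z}}(-\log E)) = 0$. The latter is Serre-dual to $H^0(\widetilde{Z}, \Omega^1_{\widetilde{Z}}(\log E) \otimes K_{\widetilde{Z}})$, which vanishes since $\widetilde{Z}$ is rational. For the fibration to extend in the smoothing, I would invoke Kawamata's logarithmic transform construction \cite{K92}: the natural $\Q$-Gorenstein smoothing of a T-singularity $\frac{1}{dn^2}(1, dna-1)$ arising from a T-blow-up of $I_d$ converts the $I_d$ fiber into a smooth multiple fiber of multiplicity $n$.

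Applying Kodaira's formula to $\pi' \colon W' \to \P^1$: since T-singularities are rational, $\chi(\O_{W'}) = \chi(\O_W) = \chi(\O_Z) = 1$, so the canonical degree on the base is $1$, and numerically $K_{W'} \equiv \lambda F$ with $\lambda = -1 + \sum_i (n_i - 1)/n_i$ and $F$ a general fiber. In case $(-\infty)$ a single multiple fiber yields $\lambda = -1/n < 0$, so pairing $mK_{W'}$ with any ample class forces $P_m(W') = 0$ for $m \geq 1$; combined with $q(W') = 0$ (upper semicontinuous from $q(W) = q(\widetilde{Z}) = 0$), Castelnuovo's criterion gives $W'$ rational. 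In case $(0)$ one has $\lambda = -1 + 2 \cdot \tfrac{1}{2} = 0$, so $K_{W'} \equiv 0$; with $\chi = 1$, $q = 0$, and two multiple fibers of multiplicity $2$, the Enriques-Kodaira classification leaves only the Enriques case. In case $(1)$ the hypothesis that one T-configuration arises from at least two blow-ups forces some $n_i \geq 3$, so $\lambda = 1 - 1/n_1 - 1/n_2 > 0$; then $K_{W'}$ is numerically a positive multiple of $F$ and $W'$ has Kodaira dimension $1$.

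The main obstacle will be controlling \emph{every} $\Q$-Gorenstein smoothing when $d > 1$, since the versal $\Q$-Gorenstein deformation of $\frac{1}{dn^2}(1, dna-1)$ may have several irreducible components (one per P-resolution). I would handle this by iterating the argument: each such component is a smoothing of a P-resolution whose T-singularities are themselves T-blow-ups of $I_{d'}$-fibers with $d' < d$, so the fibration persists at every stage, and the Kodaira-dimension computation above applies verbatim to the final smooth fiber. Ultimately the problem reduces to the Wahl case $d = 1$, where the $\Q$-Gorenstein smoothing direction is unique.
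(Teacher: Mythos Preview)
Your strategy is workable but takes a longer route than the paper, and your final paragraph rests on a misconception.

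The paper never extends the fibration to $W'$. Instead it computes $h^*(K_W)$ directly on $\widetilde{Z}$: from $K_{\widetilde{Z}} \sim -\sigma^*C + \sum_i(\nu_i-1)E_i + \sum_i(\mu_i-1)F_i$ (with $C$ a general fiber of $g$) together with the discrepancy formula $\disc(E_i)=1-\nu_i/n_1$ of Lemma~\ref{numerics}, one gets
\[
h^*(K_W)\;\equiv\;\Big(1-\tfrac{1}{n_1}-\tfrac{1}{n_2}\Big)\,\sigma^*C
\]
in cases $(0)$ and $(1)$, and $h^*(K_W)\equiv -\tfrac{1}{n_1}\sigma^*C$ in case $(-\infty)$. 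The sign of this coefficient tells you immediately whether $K_W$ is nef, anti-nef, or numerically trivial. Since $K_{\W}$ is $\Q$-Cartier in \emph{any} $\Q$-Gorenstein smoothing, that nefness (or triviality) passes to $K_{W'}$ for free, and the classification finishes exactly as you describe. No logarithmic transform, no Kodaira formula on $W'$, no worry about which smoothing you chose.

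Your ``main obstacle'' is based on a confusion between $\Def^{\QG}$ and $\Def$. The versal $\Q$-Gorenstein deformation space of a T-singularity $\frac{1}{dn^2}(1,dna-1)$ is \emph{smooth}, hence irreducible; it is the full versal space $\Def$ whose irreducible components are indexed by P-resolutions (see \S\ref{Qgd}). So there is nothing to iterate. Your approach can still be completed: since $H^2(W,T_W)=0$ makes $\Def^{\QG}(W)$ smooth, all $\Q$-Gorenstein smoothings of $W$ are deformation equivalent and therefore share the Kodaira dimension (and rationality/Enriques type) of the one Kawamata smoothing you constructed. But the paper's computation on $W$ itself makes this detour unnecessary. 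Also, the claim that $H^0(\widetilde{Z},\Omega^1_{\widetilde{Z}}(\log E)\otimes K_{\widetilde{Z}})=0$ ``since $\widetilde{Z}$ is rational'' is too quick; rationality alone does not kill this group, and the standard arguments (as in \cite{LP07} or \cite[\S4]{PSU}) proceed more carefully by induction on blow-ups.
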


\begin{proof}
For the proof, we assume $g$ has the singular fibers $I_{d_1}$ and
$I_{d_2}$. This situation adjusts to prove all cases
simultaneously. Let $\sigma \colon \widetilde{Z} \to Z$ be the
composition of blow ups for both T-blow-ups, so that $\widetilde{Z}$ contains
the T-configurations $\{E_1, \ldots, E_s \}$ and $\{F_1, \ldots,
F_r \}$ of types $\frac{1}{d_1 n_1^2}(1,d_1 n_1 a_1
-1)=[e_1,\ldots,e_s]$ and $\frac{1}{d_2 n_2^2}(1,d_2 n_2 a_2
-1)=[f_1,\ldots,f_r]$, where $E_i^2=-e_i$ and $F_i^2=-f_i$. We
also have the $(-1)$-curves $E_{s+1}$ and $F_{r+1}$, so that
$\sigma^*(I_{d_1})= \sum_{i=1}^{s+1} \nu_i E_i$, and
$\sigma^*(I_{d_2})=\sum_{i=1}^{r+1} \mu_i F_i $. Let $h \colon \widetilde{Z}
\to W$ be the contraction of both T-configurations.

Through arguments as in \cite{LP07} (see \cite[\S4]{PSU}), we know that
$$H^2(\widetilde{Z}, T_{\widetilde{Z}}(- \log ( E_1 + \ldots + E_s + F_1 + \ldots + F_r  )))=0,$$ and so there are
no local-to-global obstructions to deform $W$.

Let $C$ be the general fiber of $g$. Then, $$ K_{\widetilde{Z}} \sim
-\sigma^{*} C + \sum_{i=1}^{s+1} (\nu_i-1)E_i + \sum_{i=1}^{r+1}
(\mu_i-1)F_i $$ and $K_{\widetilde{Z}} \equiv h^* K_{W} - \sum_{i=1}^{s}
\disc(E_i) E_i - \sum_{i=1}^{r} \disc(F_i) F_i$, where $\disc$
stands for minus the discrepancy. Then, we know by Lemma \ref{numerics} that $\disc(E_i)=1 - \frac{\nu_i}{n_1}$ and $\disc(F_i)=1 -
\frac{\mu_i}{n_2}$. In this way, we have $$ h^*(K_{W}) \equiv -
\frac{1}{n_1}  \sum_{i=1}^{s+1} \nu_i E_i \equiv - \frac{1}{n_1}
\sigma^{*} C$$ for the case \textbf{(-$\infty$)}, and $$ h^*(K_{W})
\equiv \frac{n_1-2}{2n_1} \sum_{i=1}^{s+1} \nu_i E_i +
\frac{n_2-2}{2n_2} \sum_{i=1}^{r+1} \mu_i F_i \equiv \Big(1 -
\frac{1}{n_1} - \frac{1}{n_2} \Big) \sigma^{*} C$$ for cases
$(0)$ and $(1)$.

Since we are $\Q$-Gorenstein smoothing up T-singularities over $\D$, we have that $\pm K_{W}$ nef implies $\pm K_{W'}$ nef, and $K_{W} \equiv 0$ implies $K_{W'} \equiv 0$. Then, in case $(-\infty)$ we have that $-K_{W}$ is nef and not $\equiv 0$, and so $W'$ is a rational surface. We recall that in any case, $K_{W'}^2=0$, $q(X)=p_g(X)=0$. (See \cite{GS83} for the irregularity, which is constant in families, and then $p_g(W')$ follows.) For the case $(0)$ we see that $K_{W} \equiv 0$ and so for $K_{W'}$. It follows that $W'$ is an Enriques surface. For the last case $(1)$, $K_{W}$ is nef and not trivial, and so $W'$ is a minimal surface with Kodaira dimension $1$.

\end{proof}

We recall that a \emph{Dolgachev surface} of type $n_1,n_2$ is a
simply connected elliptic fibration with exactly two multiple
fibers of multiplicities $n_1$ and $n_2$; cf.
\cite[p.383]{BHPV04}.

\begin{corollary}
If in case $(1)$ we have gcd$(n_1,n_2)=1$, then a smooth
fiber of any $\Q$-Gorenstein smoothing is a Dolgachev surface of
type $n_1,n_2$. \label{dolga}
\end{corollary}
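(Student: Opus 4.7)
The plan is to exhibit an elliptic fibration $\pi'\colon W' \to \P^1$ with exactly two multiple fibers of multiplicities $n_1$ and $n_2$, and then conclude simple connectivity from $\gcd(n_1,n_2)=1$.

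First, the elliptic fibration $g\colon Z \to \P^1$ descends to a morphism $\bar g\colon W \to \P^1$: each T-configuration is contained in the $\sigma$-preimage of one of the points $p_i := g(I_{d_i})$, so $g\circ\sigma$ factors through $h\colon \widetilde Z \to W$. By Lemma~\ref{numerics}, the fiber of $\bar g$ over $p_i$, as a Cartier divisor, is $n_i\,C_i$, where $C_i$ is the image under $h$ of the terminal $(-1)$-curve of the $i$-th T-blow-up (a smooth rational curve passing through the corresponding T-singularity of $W$); all other fibers coincide with the corresponding fibers of $g$, and in particular are reduced.

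Second, I would extend $\bar g$ across the smoothing family. Since $K_W$ is nef with $K_W^2=0$ and Kodaira dimension $1$ by Theorem~\ref{t0}$(1)$, the Iitaka fibration of $W$ coincides with $\bar g$, realized by $|mK_W|$ for sufficiently divisible $m$. Because $K_{\W/\D}$ is $\Q$-Cartier, the relative linear system $|mK_{\W/\D}|$ (for an appropriate $m$) produces a morphism $\W \to \P^1 \times \D$ over $\D$ restricting to $\bar g$ on $W$ and to an elliptic fibration $\pi'\colon W' \to \P^1$ on each smooth nearby fiber; the base is $\P^1$ since $q(W')=q(W)=0$ by the invariance of irregularity noted in the proof of Theorem~\ref{t0}.

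Third, I would identify the multiplicities of $\pi'$. Away from $\{p_1,p_2\}$, the fibers of $\bar g$ are reduced, so by flatness the same holds for $\pi'$, and any multiple fiber of $\pi'$ must sit over $p_1$ or $p_2$. The Cartier divisor $\bar g^{*}(p_i)=n_iC_i$ on $W$ lifts to ${\pi'}^{*}(p_i)$ on $W'$, and a local analysis at the $\Q$-Gorenstein smoothing of the T-singularity $\frac{1}{d_in_i^2}(1,d_in_ia_i-1)$ on $C_i$ shows that this deformed fiber has multiplicity exactly $n_i$; this is the local structure underlying Kawamata's construction of Dolgachev surfaces via $\Q$-Gorenstein smoothings in \cite{K92}. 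Hence $\pi'$ has exactly two multiple fibers, of multiplicities $n_1$ and $n_2$. Simple connectivity then follows from the classical computation of $\pi_1$ for a relatively minimal elliptic surface over $\P^1$ with $q=0$ and two multiple fibers of coprime multiplicities (see \cite[Chap.~V]{BHPV04}), and $W'$ is therefore a Dolgachev surface of type $n_1,n_2$.

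The main obstacle is the multiplicity-preservation step: the numerical canonical bundle formula alone does not determine the multiplicities uniquely (for instance, with $(n_1,n_2)=(3,5)$ one also has $\frac{1}{2}+\frac{1}{30}=\frac{1}{3}+\frac{1}{5}$), so the local model of the $\Q$-Gorenstein smoothing of each T-singularity must be used to rule out spurious multiplicities and pin the multiple fibers down to $n_1$ and $n_2$.
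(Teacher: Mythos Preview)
Your outline inverts the paper's order of operations: you aim to pin down the multiplicities $n_1,n_2$ first (by a local analysis at each T-singularity) and then deduce simple connectivity from the classical $\pi_1$ formula for elliptic surfaces over $\P^1$. The paper does the opposite. It proves $\pi_1(W')=1$ first, by computing on the minimal resolution $\widetilde{W}$: Mumford's description \cite{Mumford61} of the local fundamental group around each exceptional chain, together with the identity $\nu_j=a\alpha_j-n\gamma_j$ (obtained from Lemma~\ref{numerics} and the formulas of \S\ref{cyclic}), shows that a loop around a component met by a \emph{section} of $g$ (where $\nu_j=1$) generates the relevant local cyclic group modulo $n_i$. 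The section makes the two generating loops conjugate, and $\gcd(n_1,n_2)=1$ then kills them in $\pi_1(W')$. Only after this does the paper compare canonical classes: simple connectivity forces $W'\to\P^1$ to have exactly two multiple fibers of \emph{coprime} multiplicities $n'_1,n'_2$; deforming $n'_1n'_2K_{W'}\sim(n'_1n'_2-n'_1-n'_2)F'$ into $W$ (via \cite[Thm.~4.2]{K92}) and matching against the canonical formula \cite[Thm.~4.4]{K92} on $W$ yields $n'_1n'_2(n_1n_2-n_1-n_2)=n_1n_2(n'_1n'_2-n'_1-n'_2)$, and since \emph{both} pairs are coprime this forces $\{n'_1,n'_2\}=\{n_1,n_2\}$.

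The paper's ordering thus sidesteps exactly the obstacle you flag at the end. Your example $\tfrac{1}{2}+\tfrac{1}{30}=\tfrac{1}{3}+\tfrac{1}{5}$ is precisely the point: the canonical formula alone does not determine the multiplicities, but once coprimality of $(n'_1,n'_2)$ is secured in advance via $\pi_1$, it does. Your alternative---a direct local computation that the smoothed fiber over $p_i$ has multiplicity exactly $n_i$---is plausible but is not actually carried out in your sketch, and your claim that multiple fibers of $\pi'$ can only sit over $p_1$ or $p_2$ ``by flatness'' would also need justification (reducedness of the central fiber over a point does not obviously propagate to nearby $W'$). The paper's route avoids both of these issues; what it buys is that no local multiplicity analysis of the smoothing is needed beyond the global numerical canonical formula, at the cost of the Mumford-style $\pi_1$ computation exploiting the section.
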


\begin{proof}
In \S\ref{cyclic}, we define sequences of integers $\{\alpha_j\}_{j=1}^s, \{\beta_j\}_{j=1}^s, \{\gamma_j\}_{j=1}^s$ for any Hirzebruch-Jung continued fraction $\frac{m}{q} =[b_1,\ldots,b_s]$. In particular, we saw that the discrepancy of the $E_j$ exceptional curve is $-1 + \frac{\alpha_j + \beta_j}{m}$. We now give some facts from \cite{Mumford61}. The fundamental group of a neighborhood of the complement of the exceptional divisor $\bigcup_{i=1}^s E_i$ is cyclic of order $m$, and it is generated by a loop $\xi$ around $E_1$ (or $E_s$). For any $j$, a loop $\xi_j$ around $E_j$ is a conjugate to $\xi^{\alpha_j}$ (or $\xi^{\beta_j}$) \cite[p.20]{Mumford61}. We now specialize to the case of T-singularities. For $m=dn^2$ and $q=dna-1$ with gcd$(n,a)=1$, we have $\beta_{j}+\alpha_{j}=\nu_{j} n$ by Lemma \ref{numerics}. On the other hand, in \S \ref{cyclic} we have the formula $\beta_{j}=(dna-1) \alpha_{j} - dn^2 \gamma_{j}$, and so \begin{equation} \label{poto} \nu_{j}=a \alpha_{j}- n \gamma_{j}. \end{equation}

Following the strategy in \cite[p.493]{LP07}, we now compute the fundamental group of a smooth fiber of a $\Q$-Gorenstein smoothing. The computation is done on the minimal resolution $\widetilde{W} \to W$ of the singular fiber $W$. It is enough to show that $\pi_1(\widetilde{W} \setminus E)$ is
trivial, where $E$ is the exceptional divisor. We consider two small loops $\xi$ and $\rho$ around the two components of $E$ which intersect a given section (we do have sections) of the elliptic fibration. We notice that for those components, the multiplicities $\nu_{j(i)}$ ($i=1,2$) are both equal to $1$. Then, by the equation (\ref{poto}), we obtain gcd$(\beta_{j(i)},n_i)=1$ for $i=1,2$. In this way, by the facts in the previous paragraph, these loops generate the fundamental groups of the neighborhoods of the complements of each component of $E$. The chosen section, which is a $\P^1$, gives that $\xi$ is conjugated to $\rho$. We now use that gcd$(n_1,n_2)=1$ to conclude that $\xi$ and $\rho$ become trivial in $\pi_1(\widetilde{W} \setminus E)$. This implies that $\pi_1(\widetilde{W} \setminus E)=1$.

Therefore, the smooth fiber $W'$ is a simply connected elliptic fibration with exactly two coprime multiple fibers; cf. \cite[II \S3]{D77}. The Kodaira dimension of $W'$ is $1$. By \cite[Thm.4.2]{K92}, the elliptic fibration $W' \to \P^1$ degenerates to the elliptic fibration $W \to \P^1$, so that the general fiber $F'$ of $W' \to \P^1$ deforms to the general fiber $F$ of $W \to \P^1$. Since this is a $\Q$-Gorenstein smoothing, we know that there exists $m$ so that the line bundle $m K_{W'}$ deforms to the line bundle $mK_W$. Let $n'_1, n'_2$ be the coprime multiplicities of $W' \to \P^1$. Then by the canonical formula $$n'_1 n'_2 K_{W'} \sim (n'_1 n'_2 -n'_1 -n'_2) F',$$ and so, by choosing $m=n'_1n'_2k$ for some suitable $k$, we have $n'_1 n'_2k K_{W} \sim (n'_1 n'_2 -n'_1 -n'_2)k F$ in $W$. But on $W$ we also have a canonical formula (see \cite[Thm.4.4]{K92}) which numerically gives $n_1 n_2 K_W \equiv (n_1 n_2 -n_1 - n_2)F$, and this implies $\big( n'_1 n'_2 (n_1 n_2 -n_1 -n_2)- n_1 n_2 (n'_1 n'_2 -n'_1 -n'_2) \big) F \equiv 0,$ and so $n'_1 n'_2 (n_1 n_2 -n_1 -n_2)=n_1 n_2 (n'_1 n'_2 -n'_1 -n'_2)$. But the pairs $(n_1,n_2)$ and $(n'_1,n'_2)$ are coprime. Then, up to permutation, they must be equal.
\end{proof}

\section{$K^2=1$} \label{s1}

We begin with the example corresponding to Figure 5 in \cite{LP07}. Consider the pencil of cubics in $\P_{x_0,x_1,x_2}^2$ $$\alpha
x_0^3 + \beta x_1(x_0^2+x_1^2-x_2^2) =0 $$ with $(\alpha : \beta) \in \P_{\alpha,\beta}^1$. We have base points $p=(0 : 1 : 1)$, $q=(0 : 1 : -1)$, and
$r=(0 : 0 : 1)$. We blow up three times each of them, to obtain an elliptic fibration $g \colon Z \to \P^1$ with a configuration of
singular fibers $IV^*,2I_1,I_2$. Let $A=\{x_0=0\}$, $B=\{x_1=0\}$, and $C=\{x_0^2+x_1^2=x_2^2\}$. Let $P$ and $Q$ be the last
exceptional divisors over $p$ and $q$. More notation is shown in Figure \ref{f1}.

\begin{figure}[htbp]
\includegraphics[width=8cm]{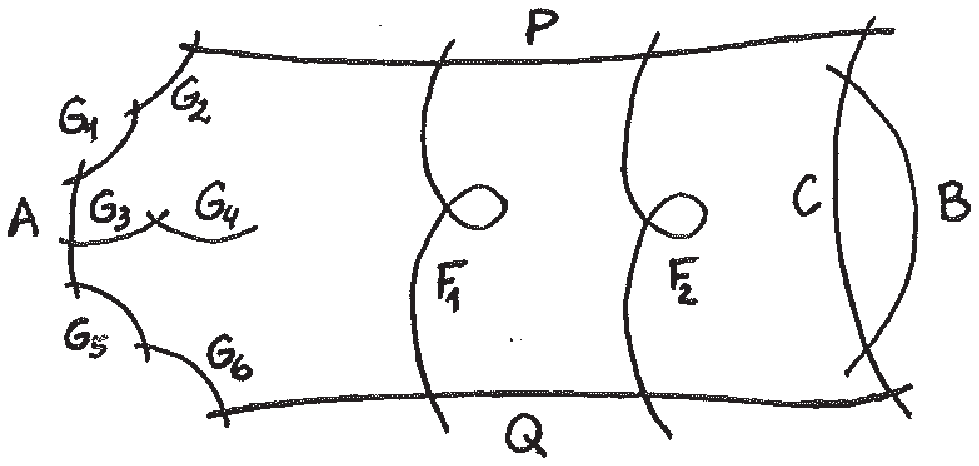}
\caption{Elliptic fibration with $IV^*,2I_1,I_2$}  \label{f1}
\end{figure}

We now blow up $Z$ $11$ times as in Figure $5$ of \cite{LP07} (see Figure \ref{f2}). Let $\widetilde{Z}'$ be the corresponding surface, and let
$X$ be the singular normal projective surface obtained by contracting the configurations of curves $[2,2,2,7]$, $[4]$, $[6,2,2]$, and $[2,6,2,3]$.

\begin{figure}[htbp]
\includegraphics[width=8cm]{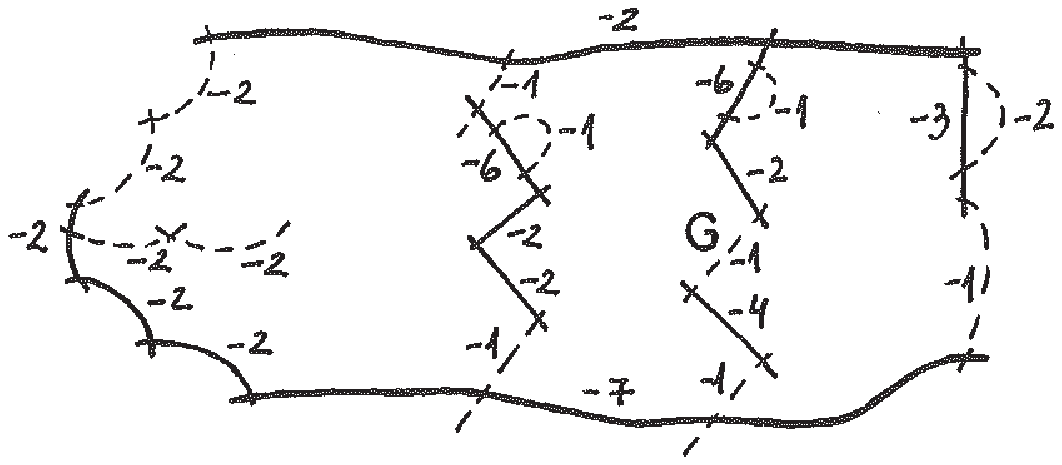}
\caption{The blow-up $\widetilde{Z}'$ of $Z$ $11$ times}  \label{f2}
\end{figure}

We have that $K_{X}$ is not nef: the intersection of the image of $G$ (see Figure \ref{f2}) in $X$ with $K_{X}$ is $-1+ \frac{3}{7} + \frac{1}{2} = -\frac{1}{14}$. However, a $\Q$-Gorenstein smoothing of these $4$ singularities indeed has the properties claimed in \cite{LP07}. To see this, we perform a flip of type \enii ~(Definition \ref{EN}) on a $\Q$-Gorenstein smoothing of $X$ over $\D$. We are flipping the curve $G$, which passes through the singularities $\frac{1}{4}(1,1)$ and $\frac{1}{49}(1,20)$. The flip of $G$ produces a surface $X^+$, and a curve $G^+$ (the flip of $G$) which passes through two Wahl singularities. A dot diagram of this transformation is shown in Figure \ref{f3}. For the computation of the Wahl singularities in $X^+$ see \textbf{($<$0)} in \S\ref{enbhd} (before Remark \ref{anti}). In this case the \enii~ is actually an initial \enii. More precisely, we take $m_1=2,a_1=1$ for $[4]$, and $m_2=7,a_2=4$ for $[2,6,2,3]$, and so $\delta=1$, $\Delta=39$, and $\Omega=16$. Then we obtain that $\delta m_1-m_2<0$, and the data for $X^+$ is ${m'}_2=2,{a'}_2=1$ and ${m'}_1=5,{a'}_1=2$.

\begin{figure}[htbp]
\includegraphics[width=9.5cm]{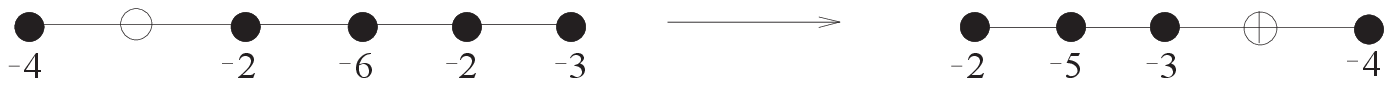}
\caption{A flip}  \label{f3}
\end{figure}

After this flip, the minimal resolution of $X^+$, denoted by $\widetilde{Z}$, is a blow-up of $Z$ $10$ times. The new configuration of relevant
curves is shown in Figure \ref{f4}. Let $W:=X^+$ be the contraction of the configurations $[4]$ ($C$), $[2,2,6]$ ($E_4+E_3+F_1$),
$[2,2,2,7]$ ($A+G_5+G_6+Q$), and $[2,5,3]$ ($E_7+F_2+P$). A standard computation of cohomology groups as in \cite{LP07} (see also \cite[\S4]{PSU} for a concise treatment) shows that $W$ has no local-to-global obstructions; see \S\ref{method1}.

\begin{remark}
In general, if we start with a Lee-Park surface $X$ with no local-to-global obstructions, then (after any birational operation of type \eni~ or \enii~) we end up with $X^+$ with no local-to-global obstructions (including also the divisorial case). This again can be seen via the standard computations in \cite{LP07}.
\end{remark}

\begin{figure}[htbp]
\includegraphics[width=12.5cm]{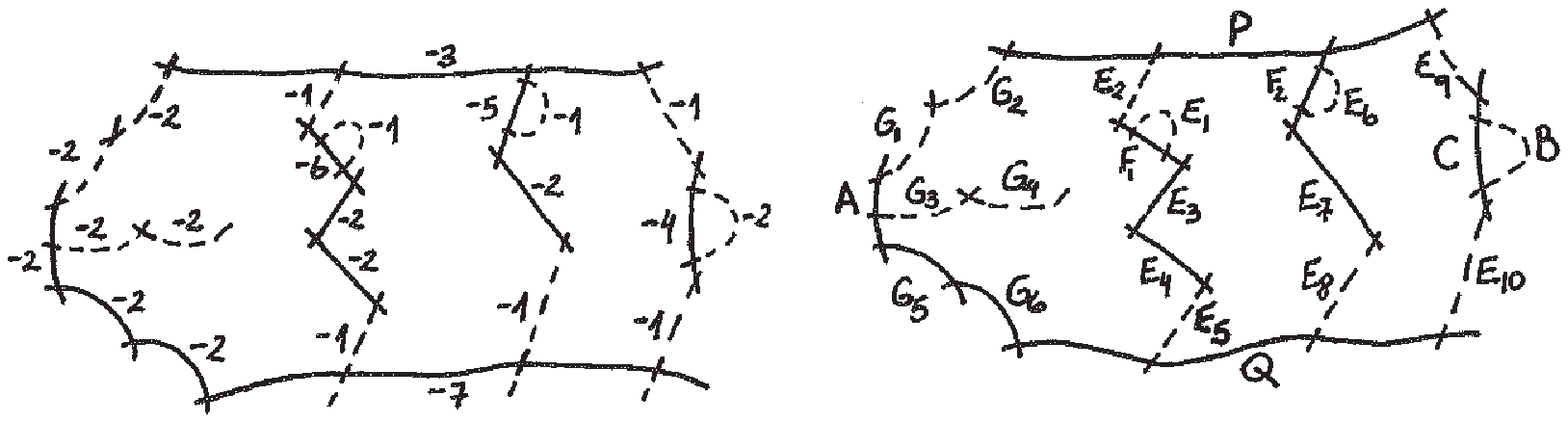}
\caption{Self-intersection (left) and notation (right) for relevant curves in the blow-up $\widetilde{Z}$ of $Z$ $10$ times}  \label{f4}
\end{figure}

We have the $\Q$-numerical equivalence $$K_{\widetilde{Z}} \equiv  -\frac{1}{2} F_1 - \frac{1}{2} F_2 + \frac{1}{2} E_2 + \frac{1}{2}
E_3 + \frac{3}{2} E_4 + \frac{5}{2} E_5 + \frac{1}{2} E_7 + \frac{3}{2} E_8 + E_9 + E_{10},$$ and so, by subtracting the
discrepancies of the singularities in $W$, we verify that the pull-back of $K_W$ can be written as a $\Q$-Cartier divisor with positive coefficients, and it is nef. This is done using the same strategy as in \cite[p.498]{LP07}. Therefore, the general fiber of a $\Q$-Gorenstein smoothing is a smooth minimal projective surface of general type with $K^2=1$, $p_g=0$, and trivial $\pi_1$.

Let us consider its KSBA model $\overline{W}$ (see \S\ref{method1}). Notice first that $\overline{W}$ is not $W$ since $G_4 \cdot K_W=0$. Let $\pi \colon \widetilde{Z} \to W$ be the minimal resolution. The strategy to find $\overline{W}$ will be to identify all curves $\Gamma$ in $\widetilde{Z}$ not contracted by $\pi$, such that $\Gamma \cdot \pi^*(K_W)=0$. In this case we have $\Gamma \cdot K_{\widetilde{Z}}=0$, because of the actual curves in the (effective) support of $\pi^*(K_W)$. Also, since $\Gamma \cdot E_i \neq 0$ may only happen for $i=1$ and $i=6$, we have that $\Gamma \cdot K_{Z'}=0$, where $Z'$ is the blow-up of $Z$ at the nodes of $F_1$ and $F_2$. Notice that $\Gamma$ does not intersects $P$ and $Q$ as well.

Now contract $P$ and $Q$ to obtain a Halphen surface $Z''$ of index $2$ as in Lemma \ref{l1}. In $Z''$ we have $\Gamma \cdot K_{Z''}=0$. But
this means that $\Gamma$ does not intersect a general fiber, and so it is contained in a singular fiber. In this way, the curve $\Gamma$ must
be a smooth rational curve with self-intersection $(-2)$. The elliptic fibration on $Z''$ has three singular fibers: one $I_2^*$
and two $I_2$. The two $I_2$ are $F_1+F_2$ and $B+D$, where $D=\{x_0^2 + 3 x_1^2 = 3 x_2^2 \}$. The two conics
$M=\{x_0^2+3x_1^2=3x_1 x_2 \}$ and $N=\{x_0^2+3x_1^2=-3x_1 x_2 \}$ are part of $I_2^*$, together with $G_4$, $G_3$, $A$, $G_1$, and
$G_5$. Then, we conclude that $\Gamma$ can only be $G_4$, and the KSBA model $\overline{W}$ of $W$ is the contraction
of $G_4$.

Let $\overline{\M}_{1,1}$ be the KSBA moduli space that contains $\overline{W}$. As explained in \S\ref{method2}, locally at $\overline{W}$, this moduli space is the finite quotient of a smooth germ of dimension $8$, and it has $4$ divisors passing through $\overline{W}$ whose general point represents a KSBA singular normal surface with one of the Wahl singularities: $\frac{1}{4}(1,1)$, $\frac{1}{16}(1,11)$, $\frac{1}{25}(1,19)$, and $\frac{1}{25}(1,9)$. As before, we denote the corresponding divisors by $\DD {2 \choose 1}$, $\DD{4 \choose 1}$, $\DD{5 \choose 1}$, and $\DD{5 \choose 2}$. The goal is to identify the smooth minimal model of the surface represented by a general point in $\DD {n \choose a}$ using \S\ref{identify}. For this purpose, we will run MMP on $W$ (instead of $\overline{W}$, see \S\ref{method3}).

\textbf{The general point of $\DD{2 \choose 1}$.} Since there are no local-to-global obstructions to deform $W$, we consider a one
parameter $\Q$-Gorenstein smoothing of all singularities of $W$ except $\frac{1}{4}(1,1)$. In this family, we simultaneously
resolve the singularity $\frac{1}{4}(1,1)$, obtaining a $\Q$-Gorenstein smoothing $(X_0 \subset \mathcal{X}_{0}) \to (0 \in \D)$ of $X_0$, which is $W$ with the singularity $\frac{1}{4}(1,1)$ resolved. The minimal resolution of $X_0$ is $\widetilde{X_0}:=\widetilde{Z}$. In this case we will need only flips, they are shown in Figure \ref{f5}.

We use the dot diagram description in \cite[Notation 5.5]{HTU12}, where in particular $\ominus$ represents the negative curve of the extremal neighborhood, and $\oplus$ represents the flipping positive curve in $X^+$. We remark that in a dot diagram the operations occur in the minimal resolution $\widetilde{X_i}$ of the $X_i$ (see \S\ref{method3}), showing how curves are affected after applying a flip or divisorial contraction.

\begin{figure}[htbp]
\includegraphics[width=7cm]{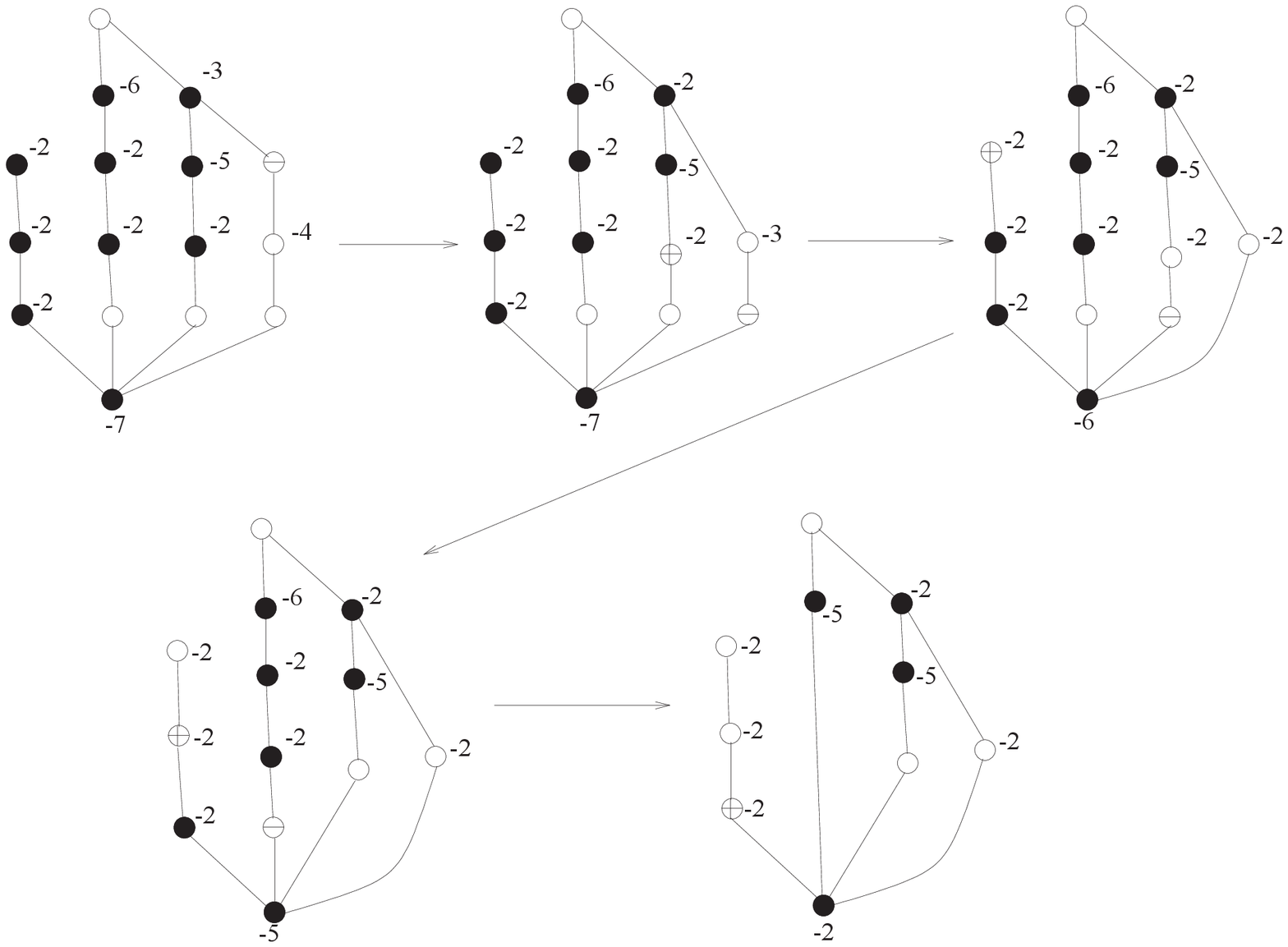}
\caption{Flips for $\DD{2 \choose 1}$}  \label{f5}
\end{figure}

Let $(X_4 \subset \mathcal{X}_{4}) \to (0 \in \D)$ be the final deformation (see \S\ref{method3}). The minimal resolution $\widetilde{X_4}$ of $X_4$ is the blow-up of $Z$ at four points: the nodes of $F_1$ and $F_2$, the intersection of $P$ and $F_1$, and the intersection between $Q$ and $F_2$. The surface $X_4$ is obtained by contracting
$P+F_2$ and $Q+F_1$ in $\widetilde{X_4}$. By Lemma \ref{l1}, we can see $\widetilde{X_4}$ as the blow-up at four points of a Halphen surface of index $2$, and then by Lemma \ref{dolga} with a configuration $[3,3]$, which comes from $[2,5]-1-[2,5]$, we obtain that any $\Q$-Gorenstein smoothing of $X_4$ is a Dolgachev surface
of type $2,3$.

\begin{proposition}
The minimal resolution of a surface representing the general point in $\DD{2 \choose 1}$ is a Dolgachev surface of type $2,3$. It contains a smooth rational curve with self-intersection $(-4)$.
\label{p1}
\end{proposition}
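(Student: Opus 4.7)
The plan is to apply the explicit MMP of Section \ref{method3} to the $\Q$-Gorenstein smoothing $(X_0 \subset \X_{0}) \to (0 \in \D)$, where $X_0$ is $W$ with its $\frac{1}{4}(1,1)$ singularity simultaneously resolved (so that $\widetilde{X_0} = \widetilde{Z}$). First I would execute the four flips sketched in Figure \ref{f5}; each one is of type \eni~or \enii~(Definition \ref{EN}) with numerical data determined by the recipes of \S\ref{enbhd}, and their effects on the minimal resolution are conveniently recorded using the dot-diagram formalism of \cite[Notation 5.5]{HTU12}. After four steps one reaches $(X_4 \subset \X_{4}) \to (0 \in \D)$ with $K_{X_4}$ nef, so the relative MMP terminates. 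Then I would verify concretely that $\widetilde{X_4}$ is the blow-up of $Z$ at four points --- the nodes of $F_1, F_2$ and the intersection points $P \cap F_1, Q \cap F_2$ --- and that $X_4$ is the contraction of the two disjoint chains $P + F_2$ and $Q + F_1$, each a $[2,5]$-configuration producing a Wahl singularity of type $\frac{1}{9}(1,5)$.

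To identify the smooth general fiber, I would use Lemma \ref{l1}: the surface obtained from $Z$ by blowing up the nodes of $F_1, F_2$ and contracting the sections $P, Q$ is a Halphen surface $Z''$ of index $2$, with $F_1 + F_2$ appearing as an $I_2$ fiber. The further two blow-ups at $P \cap F_1$ and $Q \cap F_2$ then present $\widetilde{X_4}$ as a four-point blow-up of $Z''$ whose relevant new configuration is $[2,5]-1-[2,5]$ sitting over the $I_2$ fiber (contracting to the pair of Wahl singularities of $X_4$). The combined effect --- a Halphen multiple fiber of multiplicity $2$ together with the two Wahl singularities $\frac{1}{9}(1,5)$ (each with $n = 3$) --- yields, via Corollary \ref{dolga} applied to this Halphen-plus-Wahl picture, a multiple fiber of multiplicity $3$ in the $\Q$-Gorenstein smoothing. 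Since $\gcd(2,3) = 1$, the general fiber of $\X_{4} \to \D$ is a simply connected elliptic surface with exactly two multiple fibers of coprime multiplicities $2$ and $3$, i.e., a Dolgachev surface of type $(2,3)$.

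For the $(-4)$-curve, I would exhibit a smooth rational curve $\Gamma$ in $\widetilde{X_4}$ with $\Gamma^2 = -4$ which is disjoint from the two contracted chains $P + F_2$ and $Q + F_1$. Such a $\Gamma$ descends to a $(-4)$-curve in $X_4$ lying in its smooth locus, hence it deforms rigidly (as $h^0(\O_{\P^1}(-4)) = 0$) to a $(-4)$-curve on the general smooth fiber. The main obstacle will be identifying $\Gamma$ explicitly: I expect it to arise by tracking one of the curves from Figure \ref{f4} (for instance a component of the $IV^*$ fiber, or one of the conics $M, N$) through the four flips, whose self-intersection accumulates to $-4$ in $\widetilde{X_4}$ precisely because each flip of type \eni~or \enii~modifies self-intersections only of curves adjacent to the flipped curve $C_i$.
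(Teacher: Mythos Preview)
Your plan for the Dolgachev identification follows the paper's argument essentially verbatim: run the four flips of Figure~\ref{f5}, recognize $\widetilde{X_4}$ as a four-fold blow-up of the Halphen surface $Z''$ of Lemma~\ref{l1}, and then invoke Corollary~\ref{dolga} (adapted to the Halphen setting) to conclude that the smooth general fiber is a Dolgachev surface of type $2,3$. That part is fine.

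The treatment of the $(-4)$-curve, however, is needlessly roundabout and betrays a small misunderstanding of the setup. By the definition of $\DD{2 \choose 1}$ (see \S\ref{method2}), a general point of this divisor is a KSBA surface $S$ with exactly one Wahl singularity $\frac{1}{4}(1,1)$; its minimal resolution $\widetilde{S}$ therefore contains the exceptional $(-4)$-curve automatically. In your notation, $X_0$ is $W$ with the $\frac{1}{4}(1,1)$ resolved, so the exceptional $(-4)$-curve $C$ already sits in $X_0$, away from all the singularities of $X_0$, and hence deforms to a $(-4)$-curve in every fiber $X_0'$. Since each of the four operations in Figure~\ref{f5} is a flip, the general fibers satisfy $X_4' \cong X_0'$, and the $(-4)$-curve is simply there by construction. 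There is no need to locate a $(-4)$-curve in $\widetilde{X_4}$ disjoint from the two $[2,5]$-chains, nor to track components of the $IV^*$ fiber or the conics $M,N$ through the flips. (In fact the curve you would eventually find this way is just the strict transform of $C$ itself.)

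As a side remark, the paper's Remark immediately following the proposition gives an independent elementary argument: once you know $X_0'$ is smooth, simply connected, has $K^2=0$, $p_g=0$, contains a $(-4)$-curve whose contraction has nef canonical class, and is not of general type (Proposition~\ref{p5}), you can rule out Kodaira dimensions $-\infty$ and $0$ directly and pin down the multiplicities $2,3$ from the canonical bundle formula. This avoids both the Halphen surface and Corollary~\ref{dolga} entirely.
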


\begin{remark}
Because of the simplicity of $\frac{1}{4}(1,1)$, the previous proposition can also be proved as follows. Let $Y$ be a smooth projective surface containing a $(-4)$-curve $\Gamma$ and $K_Y^2=0$. Let $f \colon Y \to X$ be the contraction of $\Gamma$. If $K_X$ is nef, then $Y$ is not rational. Indeed, if $Y$ is rational, then by Riemann-Roch $h^0(Y,-K_Y) \geq 1$ and so $-K_Y \sim E \geq0$. Since $K_Y \cdot \Gamma = 2$, we have $\Gamma \subset E$. We know that $f^*(2K_X) \sim -2E+\Gamma$. But $E \neq \Gamma$, and so $f^*(2K_X)$ cannot be nef. In this way, in Proposition \ref{p1} we cannot have that the resolution of $\frac{1}{4}(1,1)$ is rational. Also, the Kodaira dimension cannot be $0$ because of $\Gamma$, and it cannot be $2$ because of Proposition \ref{p5}. Therefore it is $1$, and so it has an elliptic fibration. Since it is simply connected, it must have exactly two coprime multiple fibers of multiplicities $a$ and $b$ \cite[II \S3]{D77}. But now it is easy to check using the canonical class formula and $\Gamma$ that the only possibility is $a=2$ and $b=3$, i.e., a Dolgachev surface of type $2,3$.
\end{remark}

\textbf{The general point of $\DD{4 \choose 1}$.} We work as we did with $\DD{2 \choose 1}$, but now with the singularity $\frac{1}{16}(1,11)$. We perform $7$ flips as shown in Figure \ref{f6}. Let $X_7$ be the central singular fiber of the corresponding deformation after the $7$th flip. It has only a $\frac{1}{4}(1,1)$ singularity. The minimal resolution of $X_7$ is the blow up of $Z$ at two points, which are disjoint from the $(-4)$-curve. This situation is as in Theorem \ref{t0} part $(-\infty)$. The general fiber of the $\Q$-Gorenstein smoothing is rational.

\begin{proposition}
The minimal resolution of a surface representing the general point in $\DD{4 \choose 1}$ is a rational
surface with $K^2=-2$. It contains the configuration of rational smooth curves $[6,2,2]$, and a $(-1)$-curve intersecting
the $(-6)$-curve transversally at two points. \label{p2}
\end{proposition}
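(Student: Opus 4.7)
The plan follows the identification method of \S\ref{identify} applied to the Wahl singularity $\frac{1}{16}(1,11)$ of $W$, whose exceptional chain is $[2,2,6]$. First, following \S\ref{method2}, I take the partial $\Q$-Gorenstein deformation of $W$ that is locally trivial on this singularity and smooths the remaining three singularities, and I resolve the distinguished Wahl singularity simultaneously. This produces $(X_0 \subset \X_{0}) \to (0 \in \D)$ whose general fiber $X'_0$ is, by construction, the minimal resolution of a general member of $\DD{4 \choose 1}$, and therefore contains the chain $[6,2,2]$.

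I then compute $K_{X'_0}^2$ using constancy of $K^2$ in $\Q$-Gorenstein families, so that $K_{X'_0}^2 = K_{X_0}^2$. With $\widetilde{X_0} = \widetilde{Z}$ and $K_{\widetilde{Z}}^2 = -10$, subtracting the self-intersections of the discrepancy divisors of the three T-singularities of $X_0$ (namely $-1$ for $[4]$, $-4$ for $[2,2,2,7]$, and $-3$ for $[2,5,3]$, each obtained via the formulas in \S\ref{cyclic} and adjunction) yields $K_{X'_0}^2 = -10 - (-8) = -2$. Equivalently one could argue $K_{X'_0}^2 = K_{W}^2 + D_{[2,2,6]}^2 = 1 + (-3) = -2$.

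Next I run the explicit MMP of \S\ref{method3} by applying the seven flips displayed in Figure \ref{f6}. Each flip is of type \eni~or \enii, hence an isomorphism on general fibers, so $X'_0 \cong X'_7$. After the seventh flip, $X_7$ has only the singularity $\frac{1}{4}(1,1)$, and its minimal resolution realizes the T-blow-up of one of the $I_1$ fibers of $Z \to \P^1$ (with two extra blow-ups disjoint from the resulting $(-4)$-curve). This is precisely case $(-\infty)$ of Theorem \ref{t0}, so $X'_7$, and hence $X'_0$, is rational.

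It remains to exhibit the $(-1)$-curve meeting the $(-6)$-component of $[6,2,2]$ transversally at two points. In $\widetilde{X_7}$, the T-blow-up at the node of the chosen $I_1$ produces a $(-1)$-curve meeting the $(-4)$-curve transversally at the two branches of the node. Tracking this $(-1)$-curve backwards through the seven flips, step by step, using the dot-diagram calculus of \cite[Notation 5.5]{HTU12}, I would record how its intersection pattern with the evolving chain coming from $[2,2,6]$ changes; at step $0$ the curve lands as a configuration in $\widetilde{X_0} = \widetilde{Z}$ whose image in $X_0$ meets the $(-6)$-component of $[6,2,2]$ at two points, and a degeneration argument in the spirit of Proposition \ref{(-1)} then produces a genuine $(-1)$-curve in the smooth fiber $X'_0$ with the required intersection pattern. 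The combinatorial bookkeeping across the seven flips is the main obstacle; by contrast the rationality, the value $K^2=-2$, and the presence of the chain $[6,2,2]$ are immediate from the MMP and Theorem \ref{t0}.
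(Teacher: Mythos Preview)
Your overall strategy matches the paper: resolve $\frac{1}{16}(1,11)$ to get $X_0$, run the seven flips of Figure~\ref{f6}, land on $X_7$ with a single $\frac{1}{4}(1,1)$ whose minimal resolution is a two-point blow-up of $Z$, and invoke Theorem~\ref{t0}$(-\infty)$ for rationality. The $K^2=-2$ computation and the presence of $[6,2,2]$ in $X'_0$ are also fine.

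The real divergence is in the $(-1)$-curve. Your plan is to start from the $(-1)$-curve in $\widetilde{X_7}$ hitting the $(-4)$-curve twice, push it back through seven flips, and then invoke a degeneration argument \`a la Proposition~\ref{(-1)}. This is both unnecessary and shaky: the $(-4)$-curve in $X_7$ has nothing a priori to do with the $(-6)$-component of the chain $[6,2,2]$ sitting in $X'_0$, so there is no reason the tracked curve should land with the claimed incidence; and Proposition~\ref{(-1)} concerns $(-1)$-curves touching the ends of a \emph{contracted} Wahl chain, whereas here $[6,2,2]$ is resolved in $X_0$, so that proposition does not apply as stated. You yourself flag the seven-step bookkeeping as the main obstacle.

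The paper avoids all of this. In $\widetilde{X_0}=\widetilde{Z}$ there is already a $(-1)$-curve $E_1$ (Figure~\ref{f4}) meeting the $(-6)$-curve $F_1$ transversally at two points, and $E_1$ is disjoint from the three Wahl configurations that remain singular in $X_0$. Hence $E_1$ lies in the smooth locus of $X_0$ and lifts directly to every fiber of $(X_0\subset\X_0)\to(0\in\D)$ as a $(-1)$-curve, with its intersection with the (likewise lifted) chain $[6,2,2]$ preserved. No flips, no tracking, no degeneration argument are needed for this part.
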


\begin{figure}[htbp]
\includegraphics[width=7.5cm]{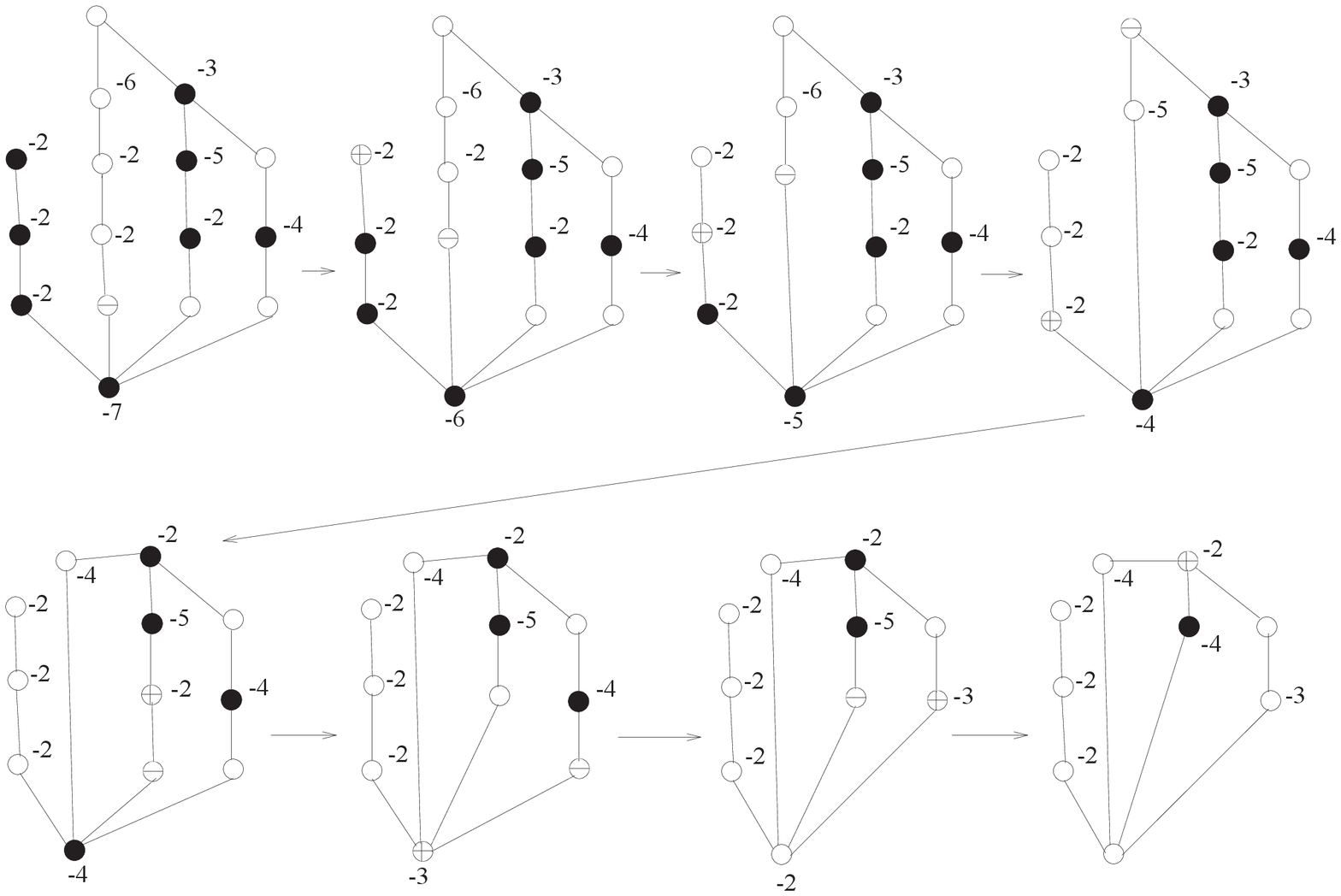}
\caption{Flips for $\DD{4 \choose 1}$} \label{f6}
\end{figure}

The $(-1)$-curve intersecting the $(-6)$-curve transversally at two points comes from the $(-1)$-curve $E_1$ (see Figure \ref{f4}) having the same property in $X_0$. We point out that this $(-1)$-curve does not contain any singularity of $X_0$, and so it lifts in any deformation \cite[IV\S4]{BHPV04}.

\textbf{The general point of $\DD{5 \choose 1}$.} We now perform the sequence of $3$ flips shown in Figure \ref{f7}. Notice that
the situation after the last flip is very similar to the previous case.

\begin{proposition}
The minimal resolution of a surface representing the general point in $\DD{5 \choose 1}$ is a rational surface with $K^2=-3$. It contains the configuration of
rational smooth curves $[7,2,2,2]$, and two disjoint $(-1)$-curves intersecting the $(-7)$-curve transversally at two points each.
\label{p3}
\end{proposition}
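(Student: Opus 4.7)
Following the template of Propositions \ref{p1} and \ref{p2}, and per \S\ref{identify}, the plan is first to construct a $\Q$-Gorenstein deformation $(X_0 \subset \X_0) \to (0 \in \D)$ in which $X_0$ is $W$ with the Wahl singularity $\frac{1}{25}(1,19)$ resolved simultaneously while the three other Wahl singularities are being smoothed. The minimal resolution $\widetilde{X_0}$ is the $\widetilde{Z}$ of Figure~\ref{f4}, in which the preserved chain $A+G_5+G_6+Q$ realizes $[2,2,2,7]$, and the general fiber $X'_0$ is the minimal resolution of the surface we want to identify.

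Next, I would run the explicit MMP of \S\ref{method3} on $(X_0\subset\X_0)$, producing the three flips depicted in Figure~\ref{f7}. For each step $i=1,2,3$ the main work is: locate the smooth rational curve $C_i\subset X_{i-1}$ with $K_{X_{i-1}}\cdot C_i<0$ and $C_i^2<0$, verify that $C_i$ avoids the preserved singularity (so the $[2,2,2,7]$ chain is unchanged), and compute the flip by the algorithms of \S\ref{enbhd} (many of these steps are expected to fit the special case of Proposition~\ref{specialFlip}). The flipping data are tracked on the dot diagram of \cite[Notation 5.5]{HTU12}, in direct analogy with Proposition~\ref{p2}.

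After the third flip, $X_3$ should carry $\frac{1}{25}(1,19)$ as its only singularity, and, paralleling the $X_7$ of Proposition~\ref{p2}, its minimal resolution $\widetilde{X_3}$ should realize $[7,2,2,2]$ as a T-blow-up of an $I_1$ fiber of $Z$, with the remaining blow-ups disjoint from this chain. Theorem~\ref{t0}$(-\infty)$ then gives rationality of any $\Q$-Gorenstein smoothing of $X_3$; since flips preserve general fibers, $X'_0 = X'_3$ is a smooth rational surface. The invariant $K_{X'_0}^2$ follows from $K_{X'_0}^2 = K_W^2 + D^2$, where $D$ is the discrepancy divisor of the $[2,2,2,7]$ chain; the formulas of \S\ref{cyclic} give discrepancies $\bigl(-\tfrac{1}{5},-\tfrac{2}{5},-\tfrac{3}{5},-\tfrac{4}{5}\bigr)$, hence $D^2 = -4$ and $K^2 = -3$.

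Finally, the two disjoint $(-1)$-curves meeting the $(-7)$-curve transversally at two points each should be exhibited as $(-1)$-curves in $\widetilde{X_3}$ that are disjoint from the blown-up locus of the T-blow-up and each cross $Q$ at two distinct points; by \cite[IV \S4]{BHPV04} they then lift to the general fiber $X'_0$, exactly as the single $(-1)$-curve $E_1$ does in Proposition~\ref{p2}. I expect the principal obstacle to be the combinatorial bookkeeping of the three flips and the precise identification of these two $(-1)$-curves together with their intersection configuration with $Q$ in $\widetilde{X_3}$.
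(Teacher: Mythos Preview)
Your outline for the first three parts---setting up $(X_0\subset\X_0)\to(0\in\D)$, running the three flips of Figure~\ref{f7}, and invoking Theorem~\ref{t0}$(-\infty)$ to conclude rationality---matches the paper's approach, and your discrepancy computation for $K^2=-3$ is correct.

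The genuine divergence is in how the two $(-1)$-curves are produced. You propose to locate them as honest $(-1)$-curves in $\widetilde{X_3}$, disjoint from the singular locus, and lift them via \cite[IV \S4]{BHPV04}, exactly as $E_1$ was lifted in Proposition~\ref{p2}. The paper does \emph{not} do this, and there is no indication that such curves are visible in $\widetilde{X_3}$ (or in $X_0$). Instead the paper works entirely on $X_0$ and uses Proposition~\ref{(-1)} iteratively through a chain of \emph{partial} $\Q$-Gorenstein smoothings: first smooth $\frac{1}{16}(1,3)$ so that the pair $E_2,E_5$ (each a $(-1)$-curve touching opposite ends of the $[2,2,6]$ chain) collapses to a single $(-1)$-curve $E_t$ in the intermediate fiber $Y_1$; then smooth $\frac{1}{4}(1,1)$ so that $E_{10},E_9$ produce a second $(-1)$-curve $E'_t$ in $Y_2$; finally smooth $\frac{1}{25}(1,9)$ so that the pairs $(E_t,E_8)$ and $(E'_t,E_8)$ each yield a $(-1)$-curve in the smooth fiber, and one tracks that each meets the $(-7)$-curve twice. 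The key mechanism---a $(-1)$-curve in the general fiber degenerating to a \emph{union} of two curves through a Wahl singularity---is precisely Proposition~\ref{(-1)}, which your proposal does not invoke.

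So the obstacle you flag at the end is real, but the resolution is not more careful bookkeeping in $\widetilde{X_3}$: it is to abandon the search for global $(-1)$-curves in the central fiber and instead apply Proposition~\ref{(-1)} stepwise as singularities are smoothed.
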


\begin{figure}[htbp]
\includegraphics[width=7.5cm]{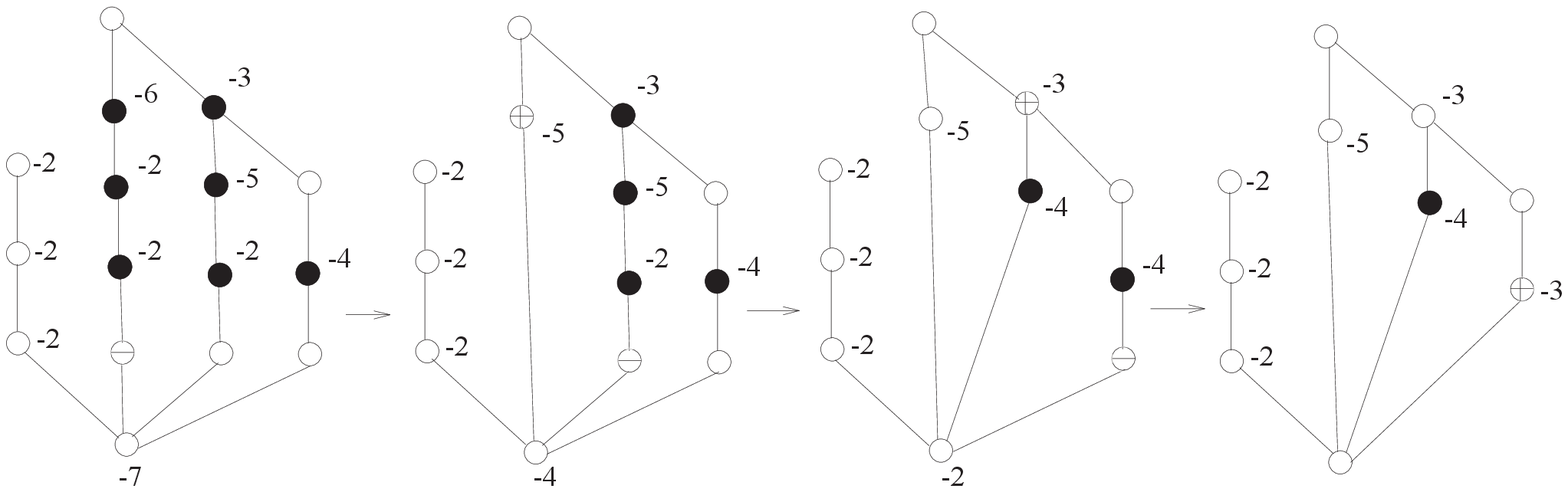}
\caption{Flips for $\DD{5 \choose 1}$} \label{f7}
\end{figure}

The existence of the $(-1)$-curves intersecting the $(-7)$-curve is an application of Proposition \ref{(-1)}, which is applied several times via partial smoothings. We now explain this with no much detail, for a more precise procedure we refer to \cite[\S4]{Urz4}. We start with $X_0$, which is $W$ with $\frac{1}{5^2}(1,4)$ resolved. For notation on curves we refer to Figure \ref{f4}. We first $\Q$-Gorenstein smooth up $\frac{1}{4^2}(1,3)$, and preserve the other singularities of $X_0$ together with the configuration $[7,2,2,2]$. Then the curves $E_2$ and $E_5$ in $X_0$ produce a $(-1)$-curve $E_t$ in the general fiber $Y_1$, intersecting the $(-7)$-curve at one point; we are using Proposition \ref{(-1)}. Notice that $Y_1$ has two singularities, the configuration $[7,2,2,2]$, and the curves $E_t$, $E_8$, $E_9$, and $E_{10}$. We now consider a $\Q$-Gorenstein smoothing of $\frac{1}{4}(1,1)$ keeping the other singularities of $Y_1$ and the configuration $[7,2,2,2]$. By the same proposition we obtain a $(-1)$-curve ${E'}_t$ in the general fiber $Y_2$ from $E_{10}$ and $E_9$. Finally we $\Q$-Gorenstein smooth up $\frac{1}{5^2}(1,9)$ in $Y_2$ to obtain a smooth surface $Y_3$ with the two claimed $(-1)$-curves. Each of them is defined by the pairs $E_t$, $E_8$, and ${E'}_t$, $E_8$, applying again Proposition \ref{(-1)}. These $(-1)$-curves are preserved together with their intersection properties with respect to the $(-7)$-curve, and so we obtain the two $(-1)$-curves in the $\Q$-Gorenstein smoothing of $X_0$.

\textbf{The general point of $\DD{5 \choose 2}$.} In this case we perform the flips shown in Figure \ref{f8}. At the end, the
special fiber is not singular anymore, and so we know that the general fiber of the deformation is a rational surface.

\begin{figure}[htbp]
\includegraphics[width=7.5cm]{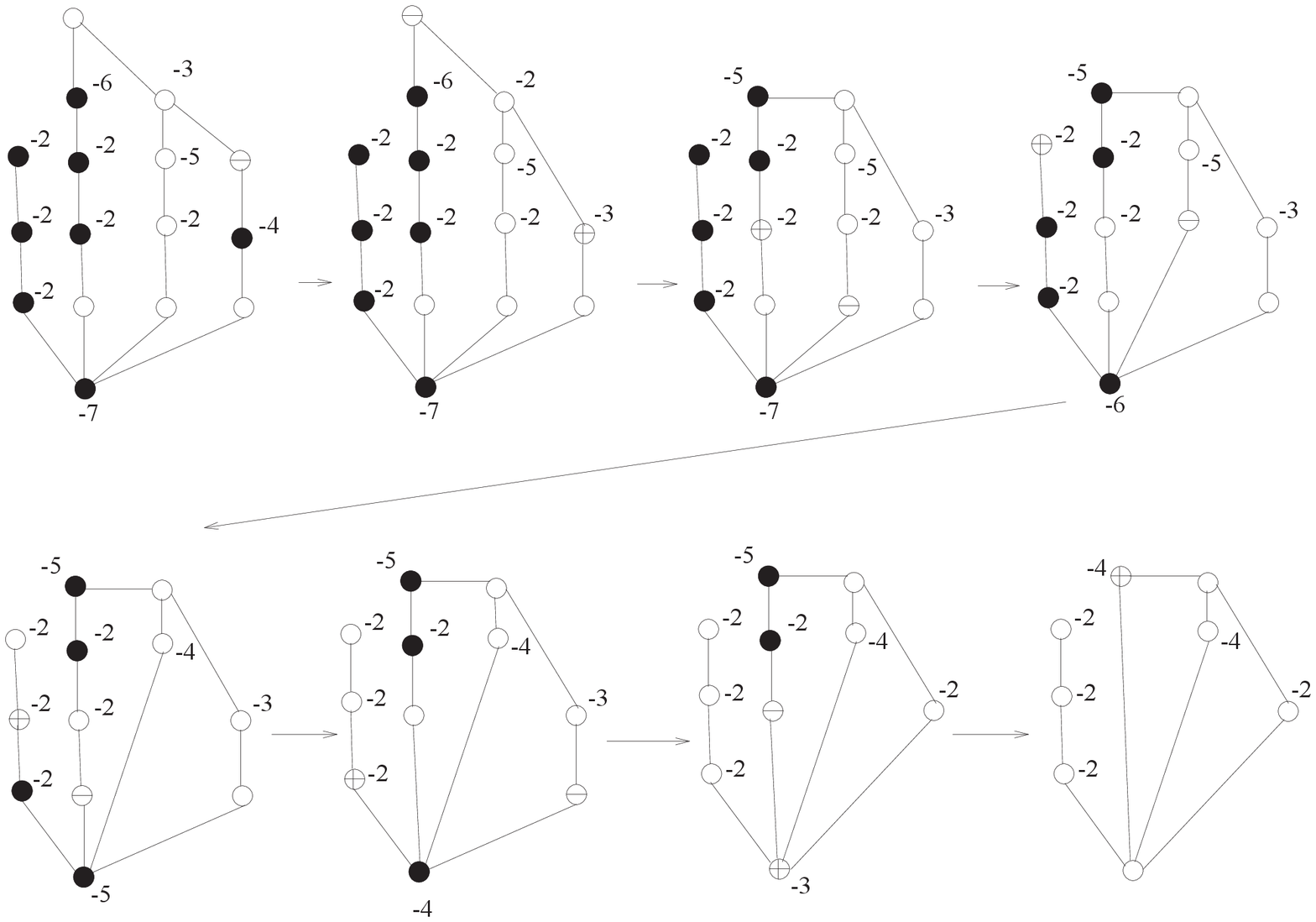}
\caption{Flips for $\DD{5 \choose 2}$} \label{f8}
\end{figure}

\begin{proposition}
The minimal resolution of a surface representing the general point in $\DD{5 \choose 2}$ is a rational surface with $K^2=-2$. It contains the configuration of rational smooth curves $[2,5,3]$, and a $(-1)$-curve intersecting the $(-5)$-curve transversally at two points. \label{p4}
\end{proposition}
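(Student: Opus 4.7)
The plan is to apply the identification method of Section \ref{identify} in exact parallel with Propositions \ref{p1}--\ref{p3}. First I would set up the family $(X_0 \subset \X_0) \to (0 \in \D)$ by taking a $\Q$-Gorenstein deformation of $W$ that is locally trivial at the Wahl singularity $\frac{1}{25}(1,9)$ (whose exceptional chain is $[2,5,3] = E_7 + F_2 + P$) and smooths the other three Wahl singularities; such a deformation exists because $H^2(W, T_W) = 0$ by the standard computation on the minimal resolution. Simultaneously resolving the chosen singularity gives $X_0$, in which the smooth $[2,5,3]$ chain is visible and the other three Wahl singularities are preserved. The general fiber $X'_0$ is the minimal resolution of a general KSBA surface $W'$ representing $\DD{5 \choose 2}$.

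Next I would execute the sequence of flips codified in Figure \ref{f8}. At each step one identifies the negative extremal curve $C_i \subset X_{i-1}$, verifies it defines an extremal neighborhood of type \eni\ or \enii, and computes the flipped surface $X_i$ using the Mori algorithm of Section \ref{enbhd} (invoking Proposition \ref{specialFlip} where the configuration fits). After the prescribed number of flips we reach $(X_n \subset \X_n) \to (0 \in \D)$ with $X_n$ smooth. Since no divisorial contraction occurs along the way, the general fibers are unchanged and $X'_n \cong X'_0$.

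For the identification: because $X_n$ is smooth, $\X_n \to \D$ is a smooth family and $X'_n \cong X_n$. The surface $X_n$ is rational since it is birational to $X_0$, and hence to $Z$, through the flips. The chain $[2,5,3]$ survives in $X_n$ because all flipping loci displayed in Figure \ref{f8} are disjoint from $E_7, F_2, P$. The desired $(-1)$-curve meeting the $(-5)$-curve $F_2$ transversally at two points originates, exactly as in Proposition \ref{p2}, from a $(-1)$-curve in $\widetilde{Z}$ produced by the blow-up of the node of the $I_1$ fiber whose proper transform is $F_2$; this curve is disjoint from all Wahl singularities of $X_0$ and from every flipping locus, so it descends unchanged to $X_n$. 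Finally, applying the discrepancy formula of Section \ref{cyclic} to the minimal resolution of $\frac{1}{25}(1,9)$ one obtains the contribution $(\sum d_i E_i)^2 = -3$, giving $K_{X_n}^2 = K_{X'_0}^2 = K_{W'}^2 - 3 = 1 - 3 = -2$.

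The hard part will be the explicit verification of each flip in Figure \ref{f8}: for each step one must check that the extremal neighborhood is of flipping type (via $\delta m_1 - m_2 < 0$ in the \enii\ case), compute the associated extremal P-resolution, and confirm that the resulting $X_i$ matches the dot diagram. This is routine given the framework of Section \ref{enbhd}, but demands careful bookkeeping. A secondary point---that the auxiliary $(-1)$-curve meeting $F_2$ stays disjoint from every flipping locus throughout the MMP---is read off the same dot diagrams.
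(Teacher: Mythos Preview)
Your approach is essentially the paper's own: resolve $\frac{1}{25}(1,9)$ to form $X_0$, run the flips of Figure~\ref{f8} until the central fiber is smooth, and read off the invariants. Two small points deserve correction. First, the assertion ``$X'_n \cong X_n$'' is too strong: a smooth proper family over $\D$ need not have isomorphic fibers. What you actually need, and what \S\ref{method3} invokes, is that in a smooth deformation the Kodaira dimension is constant, so $X'_n$ is rational because $X_n$ is. Second, the argument for the $(-1)$-curve should not route through $X_n$ at all: the paper's reasoning (exactly as after Proposition~\ref{p2}) is that the $(-1)$-curve $E_6$ in $X_0$ misses every Wahl singularity and therefore \emph{lifts to the general fiber} $X'_0$ by \cite[IV\,\S4]{BHPV04}, together with its two transverse intersections with $F_2$. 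With these adjustments your outline coincides with the paper's proof.
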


The $(-1)$-curve comes from the $(-1)$-curve $E_6$ intersecting the $(-5)$-curve transversally at two points. This finishes the description of the ``general" KSBA neighbors of $\overline{W}$.

\begin{remark}
We can construct a stable surface $T$ with the same Wahl singularities as $W$ by using a more general elliptic rational surface, which has singular fibers $I_4 + 6 I_1 + I_2$. This elliptic fibration has moduli dimension $4$. From the $4$ Wahl singularities $\frac{1}{4}(1,1)$, $\frac{1}{16}(1,3)$, $\frac{1}{25}(1,4)$, and $\frac{1}{25}(1,9)$ of $T$, we obtain the other $4$ dimensions for the moduli space around $T$, completing the $8$ dimensions needed (see \S\ref{method1}).
\end{remark}

\begin{remark}
For the other example with $K^2=1$ in \cite[Fig.6]{LP07}, we have a surface with Wahl singularities and canonical class nef. This example is related to the previous in the following way. Take a $(-1)$-curve from \cite[Fig.6]{LP07} between the configuration $[2,2,6]$ and $[4]$ (there are two choices). The configuration
$[2,2,6]-1-[4]$ represents the data of an extremal P-resolution (Definition \ref{P-res}) of $\frac{1}{36}(1,13)$. But this singularity
admits another extremal P-resolution, which is $[3,5,2]-2$. (We recall that \cite[\S4]{HTU12} is a section devoted to singularities having two extremal P-resolutions.)
Now consider the corresponding $\Q$-Gorenstein smoothing of the new surface (which has only Wahl singularities). The canonical class of the central fiber is not nef, because there is a $(-1)$-curve intersecting the $(-8)$-curve at one point. So we perform one flip of type \eni. After that, the resulting surface is the previous example. Therefore, we have a sort of dual families related by $\frac{1}{36}(1,13)$. These two families are different, they are located around two different stable surfaces of the moduli space. This is a common ``wormhole" phenomena in Lee-Park type of examples, which comes from the fact that a given cyclic quotient singularity may have two extremal P-resolutions (and no more, see \cite[\S4.2]{HTU12}).

The analog results for partial smoothings of the Wahl singularities in the example \cite[Fig. 6]{LP07} are: for both $\frac{1}{4}(1,1)$ we obtain Dolgachev surfaces of type $2,3$ (for $\frac{1}{8}(1,3)$ we also have Dolgachev surfaces of the same type), and for the other singularities we obtain rational surfaces.
\label{rrr}
\end{remark}

By Proposition \ref{p5}, we know that the KSBA boundary appearing (in this way) for $K^2=1$ consists of surfaces whose minimal resolution is not of
general type. This is not the case for $K^2>1$, as we will see in the next sections.

\section{$K^2=2$} \label{s2}

In this section and the next, the proof that $X_n$ (final surface after certain birational operations, see \S\ref{method3}) has nef canonical class can be done explicitly, using the strategy in \cite[p.498]{LP07}. As we did in \S\ref{s1}, we will omit those computations.

\begin{figure}[htbp]
\includegraphics[width=11cm]{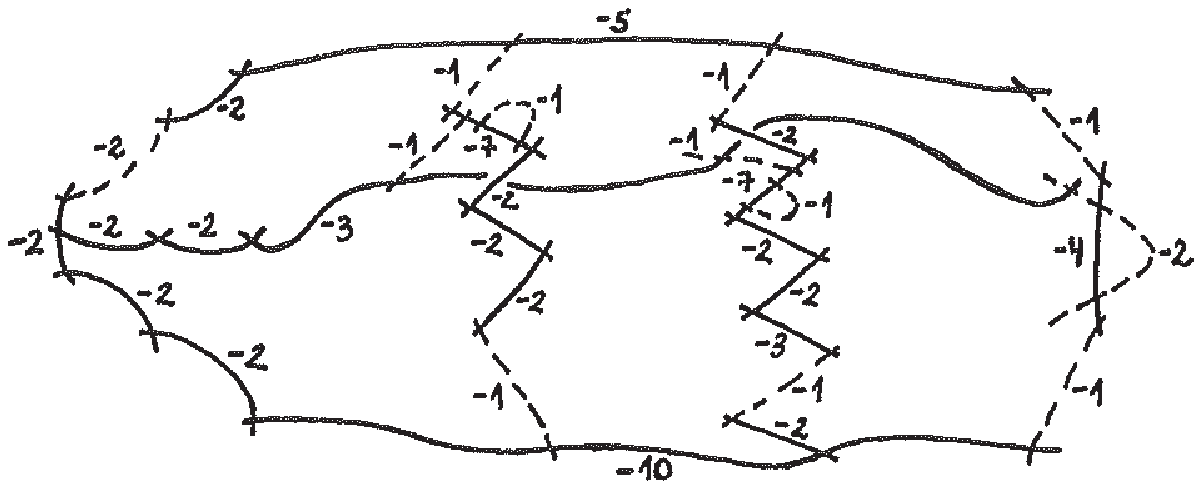}
\caption{The example \cite[Fig.2]{LP07}} \label{f9}
\end{figure}

Let us take the example in Figure 2 of \cite{LP07}. It starts with the same elliptic fibration used in \S\ref{s1}. The corresponding surface $W$ with only Wahl singularities has $K_W$ nef. One can use Lemma \ref{l1} to show that $K_W$ is ample in this case, so $W=\overline{W}$ is already a stable surface. The five Wahl singularities define five boundary divisors. We label them as before: $\DD{2 \choose 1}$ for $[4]$, $\DD{3 \choose 1}$ for $[2,5]$, $\DD{5 \choose 1}$ for $[7,2,2,2]$, $\DD{9 \choose 4}$ for $[2,7,2,2,3]$, and $\DD{15 \choose 7}$ for $[2,10,2,2,2,2,2,3]$.

\textbf{The general point of $\DD{2 \choose 1}$.} We proceed as in \S\ref{s1}. We perform the $4$ flips shown in Figure \ref{f10}. The first two are \eni ~flips, the last two are \enii ~flips. The last singular surface $X_4$ has five Wahl singularities. The canonical divisor $K_{X_4}$ is nef, and $K_{X_4}^2=1$.

\begin{figure}[htbp]
\includegraphics[width=8.5cm]{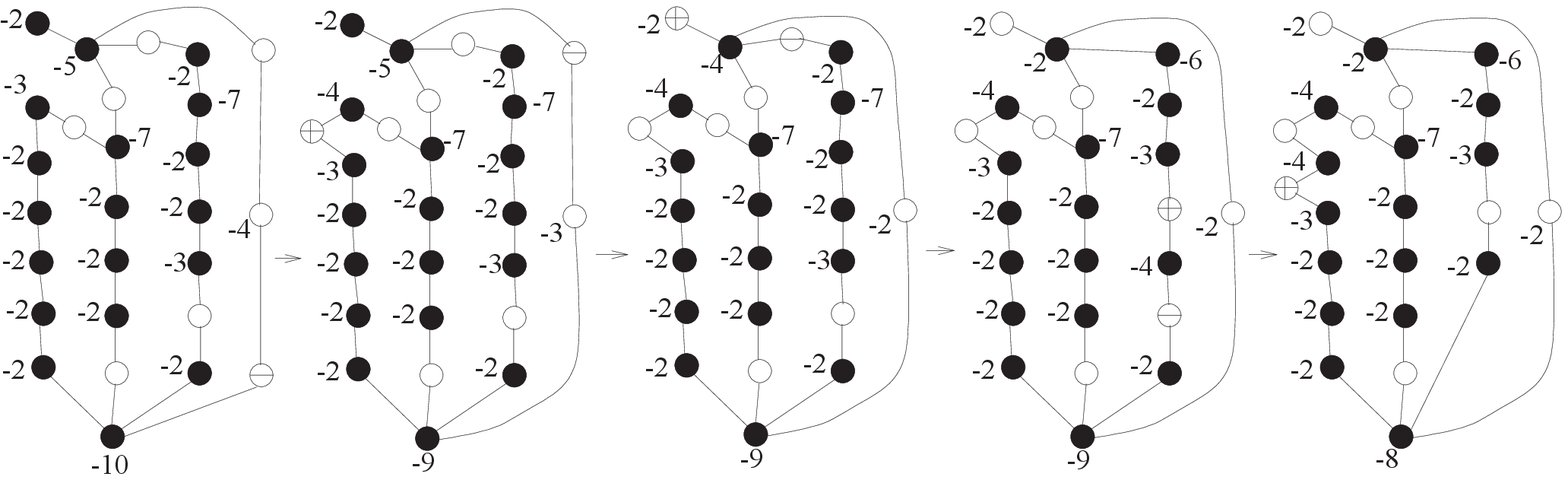}
\caption{Flips for $\DD{2 \choose 1}$} \label{f10}
\end{figure}

\begin{proposition}
The minimal resolution of a surface representing the general point in $\DD{2 \choose 1}$ is a simply connected surface of general type with $p_g=0$ and $K^2=1$. It contains a $(-4)$-curve. \label{p6}
\end{proposition}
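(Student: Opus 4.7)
The plan is to follow the MMP method of \S\ref{identify}--\S\ref{method3}. First I would resolve the Wahl singularity $\frac{1}{4}(1,1)$ of $W$ to obtain $X_0$ and take a $\Q$-Gorenstein smoothing $(X_0\subset\X_0)\to(0\in\D)$ that smooths the other four Wahl singularities of $W$ while being locally trivial around the $(-4)$-curve over $\frac{1}{4}(1,1)$; then the general fiber $X'_0$ is the minimal resolution of the surface $S$ representing a general point of $\DD{2\choose 1}$. Executing the four flips of type \eni/\enii\ displayed in Figure \ref{f10} (with numerical data read off via the Mori algorithm of \S\ref{enbhd}) produces families $(X_i\subset\X_i)\to(0\in\D)$ with $X'_i\cong X'_0$, since each flip is an isomorphism on general fibers by Proposition \ref{flipPres}.

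A direct count on the five Wahl configurations of $X_4$ gives $K_{X_4}^2=1$. Verifying that $K_{X_4}$ is nef is the principal calculation, and I would carry it out exactly as in \cite[p.498]{LP07}: write $\pi^{*}(K_{X_4})$ on the minimal resolution $\pi\colon\widetilde{X}_4\to X_4$ as an effective $\Q$-combination of exceptional and relevant curves, and check non-negativity of its intersection with each such curve. Granting nefness, \cite[p.499]{LP07} gives $K_{X'_4}$ nef with $K_{X'_4}^2=1$, so $X'_4\cong X'_0$ is a smooth minimal surface of general type with $K^2=1$.

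For $p_g=q=0$, I would use that $\chi(\O)$ is invariant in flat families and under resolutions of rational singularities, so $\chi(\O_{X'_0})=\chi(\O_{X_0})=\chi(\O_W)=1$; this gives $p_g=q$, and Debarre's inequality $K^2\geq 2p_g$ for minimal surfaces of general type with $q\geq 1$ combined with $K^2=1$ forces $q=p_g=0$. For simple connectedness, the Lee-Park computation gives $\pi_1(\widetilde{W}\setminus E)=1$ for $E$ the full exceptional divisor of the five Wahl singularities. Writing $E=E'\cup E''$ with $E''$ the $(-4)$-curve over $\frac{1}{4}(1,1)$, Van Kampen applied to $\widetilde{W}\setminus E'=(\widetilde{W}\setminus E)\cup U$, with $U$ a tubular neighborhood of $E''$ retracting to $\P^1$ and $\partial U\cong L(4,1)$, gives $\pi_1(\widetilde{W}\setminus E')=1$. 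Since $\pi_1(X'_0)$ is a quotient of $\pi_1(\widetilde{W}\setminus E')$ by the vanishing-cycle relations from the four Milnor fibers glued in, $\pi_1(X'_0)=1$. The $(-4)$-curve on $X'_0$ is $E''$ itself: it lies in the smooth locus of every $X_i$ (the dot diagrams show all flipping curves disjoint from it), so it persists in $X_4$ and deforms flatly to a $(-4)$-curve on $X'_4\cong X'_0$.

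The main obstacle is the nefness check for $K_{X_4}$: although routine in technique, the five Wahl configurations require lengthy discrepancy bookkeeping, which is why the paper defers to \cite[p.498]{LP07}. The other steps---Mori-sequence arithmetic, invariance of $\chi$, Van Kampen---are formal once the MMP has been executed.
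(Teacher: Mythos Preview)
Your proposal is correct and follows the paper's approach: resolve $\frac{1}{4}(1,1)$, run the four flips of Figure~\ref{f10}, and verify $K_{X_4}$ nef with $K_{X_4}^2=1$ via the \cite[p.498]{LP07} strategy. Your added details for $p_g=q=0$ and $\pi_1=1$ are sound, though the paper simply inherits these from the Lee--Park framework; note also that the $(-4)$-curve claim is immediate---the general point of $\DD{2\choose 1}$ has a $\frac{1}{4}(1,1)$ singularity by definition, so its minimal resolution automatically contains the exceptional $(-4)$-curve, making your flip-tracking argument for $E''$ correct but unnecessary.
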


This proposition gives a new example $X_4$ with $K^2=1$ (Lee-Park type). Its minimal resolution has T-configurations $[4]$, $[4]$, $[2,6,2,3]$, $[7,2,2,2]$, and $[3,2,2,2,8,2]$ (see Remark \ref{newfam}).

\begin{figure}[htbp]
\includegraphics[width=8.5cm]{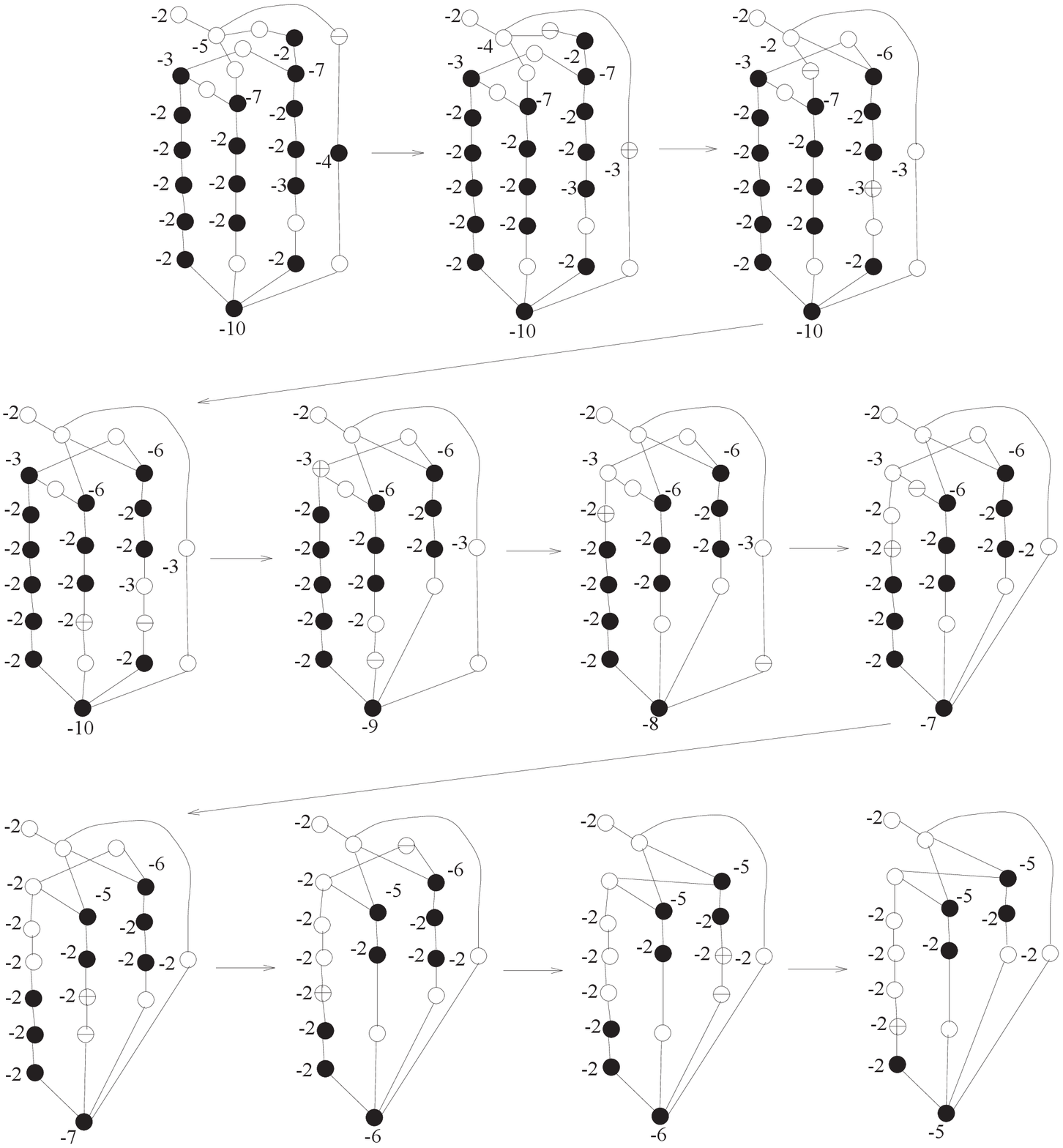}
\caption{Flips for $\DD{3 \choose 1}$} \label{f11}
\end{figure}

\textbf{The general point of $\DD{3 \choose 1}$.} Here we perform the $10$ flips shown in Figure \ref{f11}. One can verify that $X_{10}$, the last surface, has $K_{X_{10}}^2=0$ and $K_{X_{10}}$ nef. Therefore, the general fiber of the $\Q$-Gorenstein smoothing is a Dolgachev surface of some type $n_1,n_2$, since we already know that it is simply connected. One way to find $n_1,n_2$ is by arguing that a $\Q$-Gorenstein smoothing of $X_{10}$ was used in the second example with $K^2=1$ (Remark \ref{rrr}). There we knew that the Dolgachev surface contained a $(-4)$-curve, and so one obtains $n_1=2$, $n_2=3$. So we have same multiplicities for our current example (although we do not know if there is a $(-4)$-curve inside).

\begin{proposition}
The minimal resolution of a surface representing the general point in $\DD{3 \choose 1}$ is a Dolgachev surface of type $2,3$ which
contains a configuration $[2,5]$. \label{p7}
\end{proposition}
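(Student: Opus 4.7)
The plan is to follow the strategy of \S\ref{method2}--\S\ref{method3}. First I resolve the Wahl singularity $\frac{1}{9}(1,2)$ of $W$, obtaining a surface $X_0$ that still carries the four other Wahl singularities $[4]$, $[7,2,2,2]$, $[2,7,2,2,3]$, and $[2,10,2,2,2,2,2,3]$, and I take a $\Q$-Gorenstein smoothing $(X_0 \subset \X_0) \to (0 \in \D)$ that is trivial along the $[2,5]$-chain but smooths the remaining four singularities. Such a deformation exists because $W$ has no local-to-global obstructions, and it is obtained by simultaneously resolving the chosen Wahl singularity of a $\Q$-Gorenstein deformation of $W$ as in \S\ref{method2}.

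Next I run the explicit MMP of \cite[\S5]{HTU12}, applying the numerical recipes of \S\ref{enbhd} to the $K$-negative extremal curves at each step. The resulting sequence of ten flips of type \eni/\enii ~is the one encoded in the dot diagrams of Figure \ref{f11}, yielding a family $(X_{10} \subset \X_{10}) \to (0 \in \D)$. A direct calculation on the minimal resolution, along the lines of \cite[p.498]{LP07} (pull $K_{X_{10}}$ back, subtract the discrepancies of the Wahl points, and check non-negative intersection with each irreducible component), then shows that $K_{X_{10}}$ is nef and $K_{X_{10}}^2 = 0$. Since none of the flipped curves $C_i$ meets the $[2,5]$ configuration, this chain is left untouched by the MMP and persists both in $X_{10}$ and in the general fiber $X'_{10}$.

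The general fiber $X'_{10}$ is then a smooth minimal surface with $K^2 = 0$ and $K$ nef. The invariants $p_g = q = 0$ and simple connectivity propagate from $W$ through the partial $\Q$-Gorenstein smoothing (cf.\ \cite{GS83} and \cite[\S2]{LP07}), so $X'_{10}$ is a simply connected minimal surface with these numerics. Classification forces its Kodaira dimension to be $1$: Kodaira dimension $0$ with $\chi(\O) = 1$ would make it an Enriques surface, contradicting simple connectivity, and Kodaira dimension $2$ is excluded by $K^2 = 0$. Hence $X'_{10}$ is a simply connected properly elliptic surface, that is, a Dolgachev surface with two coprime multiple fibers of multiplicities $(n_1, n_2)$.

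The main remaining point, and the principal obstacle, is to determine $(n_1, n_2)$. My plan is to compare with Remark \ref{rrr}: the surface $X_{10}$ also arises as the central fiber of one of the $\Q$-Gorenstein smoothings associated to the alternative $K^2 = 1$ configuration in \cite[Fig.~6]{LP07}, whose general fiber was shown there to contain a $(-4)$-curve. By the remark following Proposition \ref{p1}, the presence of such a $(-4)$-curve on a simply connected minimal elliptic surface with $p_g = 0$ forces the multiplicities to be $(n_1, n_2) = (2, 3)$ via the canonical class formula. Transferring this identification back to the present smoothing, $X'_{10}$ is a Dolgachev surface of type $2, 3$; the $[2,5]$ configuration descends from $X_{10}$, which completes the proof.
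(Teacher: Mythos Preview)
Your proposal follows essentially the same route as the paper: resolve $\frac{1}{9}(1,2)$, perform the ten flips of Figure~\ref{f11} to reach $X_{10}$ with $K_{X_{10}}$ nef and $K_{X_{10}}^2=0$, deduce from simple connectivity that the general fiber is a Dolgachev surface, and then identify the multiplicities as $(2,3)$ by recognizing that $X_{10}$ already appeared as a central fiber in the setting of Remark~\ref{rrr}, where the general fiber carried a $(-4)$-curve. The only minor remark is that the persistence of the $[2,5]$ chain in the general fiber $X'_{10}$ is automatic from $X'_{10}\cong X'_0$ (all ten operations are flips), so you need not argue separately that the flipped curves avoid it in $X_{10}$.
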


For the other $3$ divisors we perform certain flips to deduce that its general point is rational, and the minimal resolution has

\begin{itemize}
\item[$\DD{5 \choose 1}$:] $K^2=-2$ with a configuration $[2,2,2,7]$ inside.

\item[$\DD{9 \choose 4}$:] $K^2=-3$ with a configuration $[3,2,2,7,2]$ inside.

\item[$\DD{15 \choose 7}$:] $K^2=-6$ and a configuration $[3,2,2,2,2,2,10,2]$ inside.
\end{itemize}

For the other example in \cite{LP07}, i.e. \cite[Fig.4]{LP07}, we find the following. For each of the $\frac{1}{4}(1,1)$ singularities one obtains a simply connected surface of general type with $K^2=1$ and $p_g=0$. If we keep both singularities $\frac{1}{4}(1,1)$, then one obtains a Dolgachev surface $2,3$ with two disjoint $(-4)$-curves. Finally, for each of the other Wahl singularities one obtains rational surfaces.

\section{$K^2=3$} \label{s3}

In \cite{PPS09} there are five examples producing simply connected surfaces of general type with $p_g=0$ and $K^2=3$. We take the one in \cite[Fig.8]{PPS09} because, as explained in \cite{PPS09e}, it contains a negative curve which makes the canonical divisor of the singular surface not nef. This curve gives the data of a flipping \enii. The flip is shown in Figure \ref{f12}.

\begin{figure}[htbp]
\includegraphics[width=10cm]{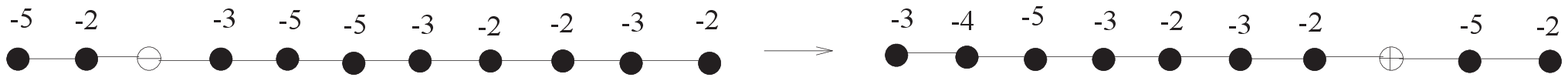}
\caption{Flip for \cite[Fig.8]{PPS09}} \label{f12}
\end{figure}

We can show that after this flip, the resulting surface $W$ has nef canonical divisor via the \cite{LP07} strategy. Hence this example has indeed the claimed
properties in \cite{PPS09}. The minimal resolution $\widetilde{W}$ of $W$ is in Figure \ref{f13}. Let $F$ be the general fiber of the induced elliptic fibration on $\widetilde{W}$. Then, following the notation in Figure \ref{f13}, we have $$K_{\widetilde{W}} \sim \sum_{i=1}^{15} E_i + E_7 + 2 E_8
+ E_{11} + E_{13} + E_{15} - F$$ and so $K_{\widetilde{W}} \equiv - \frac{1}{2} F_1 - \frac{1}{2} F_2+ E_1 + E_2 + \frac{1}{2} E_4 +
\frac{1}{2} E_5 + \frac{1}{2} E_7 + E_8 + \frac{1}{2} E_9 + E_{10} + 2 E_{11} + E_{12} + 2 E_{13} + E_{14} +2 E_{15}$. After we subtract the
discrepancies, we obtain an effective $\Q$-divisor for $\sigma^*(K_{W})$. It is easily verified that it is nef by intersecting it with the curves in its support.

\begin{figure}[htbp]
\includegraphics[width=13cm]{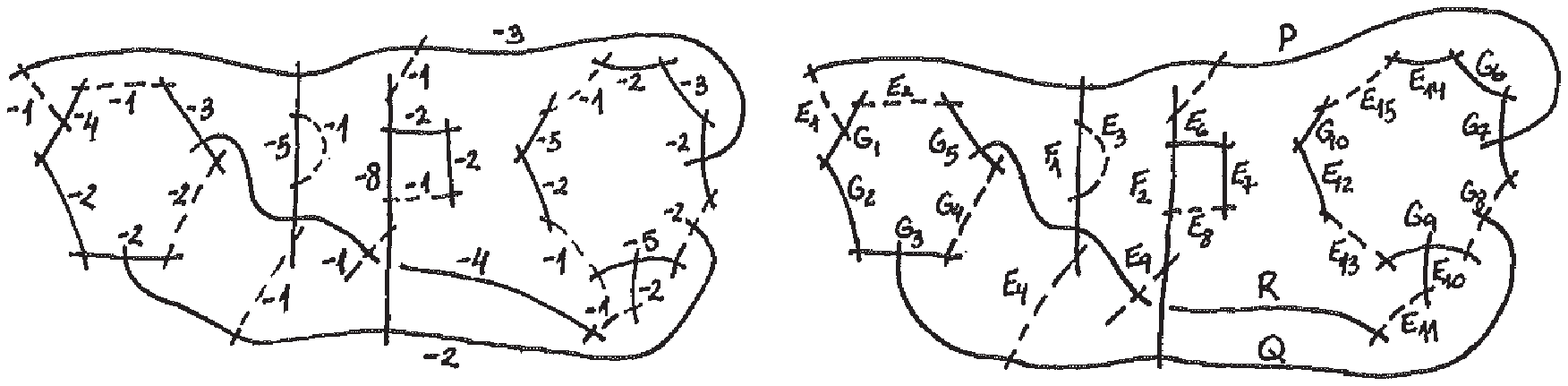}
\caption{$\widetilde{W}$ and relevant curves} \label{f13}
\end{figure}

Moreover, this support contains $E_5$, $F_2$, $E_6$, $E_7$, $E_8$, and $E_9$ which is the support of a fiber. This implies that the
only curves which could have intersection $0$ with $K_W$ are components of fibers. Then, the only one is $E_{13}$. Let
$\overline{W}$ be the contraction of $E_{13}$, so $K_{\overline{W}}$ is ample and $\overline{W}$ is a stable surface. The corresponding point in the moduli space is a finite quotient of a smooth germ of dimension $4$. The singularities of $\overline{W}$ are $\frac{1}{30^2}(1,30 \cdot 11-1)$, $\frac{1}{2 \cdot 3^2}(1, 2
\cdot 3 \cdot 1 -1)$, and $\frac{1}{16^2}(1,16 \cdot 11-1)$. Their $\Q$-Gorenstein smoothings give precisely the dimension
$4=1+2+1$. In that sense, this surface $\overline{W}$ is a ``maximal degeneration".

The loci in the moduli space defined by keeping the singularity $\frac{1}{18}(1, 5)$ has codimension
$2$. We have that the minimal model of a resolution of general point in this loci is a simply connected surface of general type
with $K^2=1$ (and $p_g=0$), with a configuration $[4,3,2]$ inside. If we $\Q$-Gorenstein deform the singularity $\frac{1}{18}(1,5)$ into $\frac{1}{9}(1,2)$, and $\Q$-Gorenstein smooth up all the other singularities, then we obtain a surface of general type with $K^2=1$. Finally, for each of the other two singularities we have divisors parametrizing rational surfaces. This describes the general points of the associated divisors $\DD{30 \choose 11}$, $\DD{3 \choose 1}$, and $\DD{16 \choose 11}$.

\begin{remark}
With the example \cite[Fig.9]{PPS09} we can show that there are $K^2=2$ surfaces of general type with $p_g=0$ in the boundary of
the moduli space for $K^2=3$. We keep in a $\Q$-Gorenstein deformation the singularity $\frac{1}{4}(1,1)$ and smooth up the
other two. After some flips we obtain a singular surface with $4$ Wahl singularities whose exceptional configurations are
$[2,3,2,3,5,4,3]$, $[2,5]$, $[2,5]$, and $[6,2,2]$. Its canonical class is nef and $K^2=2$.
\end{remark}





\vspace{0.3cm}

{\small Facultad de Matem\'aticas,

Pontificia Universidad
Cat\'olica de Chile,

Santiago, Chile.}

\end{document}